\colorlet{mdtRed}{red!50!black}
\definecolor{dblue}{rgb}{0,0,.6}
\newtcolorbox{mymathbox}[1][]{colback=white, sharp corners, #1}
\newtheorem{theorem}[equation]{Theorem}
\newtheorem{corollary}[equation]{Corollary}
\newtheorem{lemma}[equation]{Lemma}
\newtheorem{proposition}[equation]{Proposition}
\newtheorem{definition}[equation]{Definition}
\newtheorem*{theorem*}{Theorem}
\newtheorem*{lemma*}{Lemma}
\theoremstyle{remark}
\newtheorem{remark}[equation]{\bf Remark}
\newcommand{\Z}{\mathbb{Z}}
\newcommand{\C}{\mathbb{C}}
\newcommand{\A}{\mathcal{A}}
\newcommand{\Q}{\mathcal{Q}}
\renewcommand{\P}{\mathbb{P}}
\renewcommand{\O}{\mathcal{O}}
\newcommand{\mf}[1]{\mathfrak{#1}}
\newcommand{\ms}[1]{\mathscr{#1}}
\newcommand{\mb}[1]{\mathbb{#1}}
\newcommand{\mc}[1]{\mathcal{#1}}
\newcommand{\F}{\mathcal{F}}
\newcommand{\K}{\mathcal{K}}
\renewcommand{\S}{\mathcal{S}}
\newcommand{\T}{\mathcal{T}}
\newcommand{\G}{\mathcal{G}}
\newcommand{\rk}{\text{rank}}
\newcommand{\ext}{\textrm{ext}}
\newcommand{\h}[0]{\textrm{h}}
\newcommand{\Hom}[0]{\textrm{Hom}}
\newcommand{\HOM}[0]{\mathscr{H}om}
\newcommand{\red}[0]{\textnormal{red}}
\newcommand{\vp}[0]{\varphi}
\newcommand{\Quot}[0]{\textnormal{Quot}}
\newcommand{\Tau}[0]{\mathcal{T}}
\newcommand{\Tor}[0]{\textnormal{Tor}}
\newcommand{\len}[0]{\textnormal{length}}
\newcommand{\ed}[0]{\textnormal{expdim}}
\numberwithin{equation}{section}
\renewcommand \subsection[1]{
	\refstepcounter{equation}
	\refstepcounter{subsection}
	\noindent {\bf \arabic{section}.\arabic{subsection}.}{\bf #1}.
}
\begin{document}
	
	\title[Irreducibility and singularities of some nested Quot schemes]{Irreducibility and singularities of some nested Quot schemes}
	
	\author[P. Rasul]{Parvez Rasul} 
	
	\address{School of Mathematics, Korea Institute for Advanced Study, 85 Hoegiro,
Dongdaemun-gu, Seoul 02455, Republic of Korea}

	\email{\href{mailto:rasulparvez@kias.re.kr}{rasulparvez@kias.re.kr}}

	\author[R. Sebastian]{Ronnie Sebastian} 
	
	\address{Department of Mathematics, Indian Institute of Technology Bombay, Powai, Mumbai 
		400076, Maharashtra, India}
	
	\email{\href{mailto:ronnie@iitb.ac.in}{ronnie@iitb.ac.in}}
	
	\subjclass[2010]{14H60}
	
	\keywords{Quot schemes}
	
	\begin{abstract}
		Let $C$ be a smooth projective curve over $\C$ of genus $g\geqslant 1$. 
		Let $E$ be a vector bundle on $C$ of rank $r$ and degree $e$.
		Given integers $k_1,k_2,d_1,d_2$ such that $r>k_1>k_2>0$,
		let $\Q^{k_1,k_2}_{d_1,d_2}(E)$ denote the nested Quot scheme 
		which parametrizes pair of quotients 
		$[E \twoheadrightarrow F_1 \twoheadrightarrow F_2]$ 
		such that $F_i$ has rank $k_i$ and degree $d_i$.
		We show that these nested Quot schemes are integral, local complete 
		intersection schemes  when 
		$d_1\gg d_2\gg 0$ or $d_2\gg d_1\gg 0$.  
	\end{abstract}
	
	\maketitle
	
	\section{Introduction}
        Let $C$ be a smooth projective curve over $\C$ of genus $g\geqslant 1$. 
        Let $E$ be a vector bundle on $C$ of rank $r$ and degree $e$.
        Let $k$ be an integer such that $0<k<r$. 
        Let $\Q^k_{d}(E):= \Quot_{C/\C}(E,k,d)$ denote the Quot scheme of quotients of $E$ of rank $k$ and degree $d$ {on $C$}.
        Quot schemes are very important objects in the study of geometry of moduli spaces. 
        The Quot scheme $\Q^k_{d}(\O_C^{\oplus r})$ also provides a 
        compactification of the space of maps from $C$ into the Grassmannian.
        Thus, Quot schemes also appear in a natural way in enumerative geometry.
        In \cite{Str}, Stromme proved that the Quot scheme 
        $\Q^k_{d}(\O^{\oplus r}_{\P^1})$ over $\P^1$ is smooth 
        and irreducible and computed its Picard group.
        Let $C$ be smooth and projective of genus $g \geqslant 1$. 
        When $E$ is trivial, it is proved in \cite{BDW} that 
        the Quot scheme $\Q^k_{d}(\O_C^{\oplus r})$ is 
        irreducible, generically smooth
        and is a local complete intersection for $d \gg 0$.
        For any vector bundle $E$ on $C$, it is proved in 
        \cite{PR03} that the Quot scheme $\Q^k_{d}(E)$ is 
        irreducible and generically smooth when $d \gg 0$.
        In \cite{gs22}, the authors compute the 
        Picard group of $\Q^k_{d}(E)$
        and show that $\Q^k_{d}(E)$ is integral, normal, 
        local complete intersection and locally factorial 
        when $d\ \gg 0$.
 
        When $k=0$, the Quot scheme $\Q^0_{d}(E)$ of torsion 
        quotients of $E$ is a smooth variety of dimension $rd$.
        This is a well-studied variety, 
        and we only mention a few recent works  
        \cite{BFP19}, \cite{OS23}, \cite{OP21}, \cite{BGS22}.

    A natural generalization of the Quot scheme is the nested
    Quot scheme.
    Given integers $k_1,k_2,d_1,d_2$ such that $r>k_1>k_2>0$,
    let $\Q^{k_1,k_2}_{d_1,d_2}(E)$ denote the nested Quot scheme 
    which parametrizes pair of quotients 
    $[E \twoheadrightarrow F_1 \twoheadrightarrow F_2]$ 
    such that $F_i$ has rank $k_i$ and degree $d_i$. 
    
    We may also consider the case when $k_1=k_2=0$. 
    When $k_1=k_2=0$ and $E=\mc O_C$,
    we get the nested Hilbert schemes of points, $\Q^{0,0}_{d_1,d_2}(\mc O_C)$.
    In \cite{che}, Cheah proved that the nested Hilbert
    scheme over a smooth projective curve $C$ is isomorphic to a product 
    of symmetric products of $C$ and hence smooth.
     A variation of the nested Hilbert scheme, namely, double 
    nested Hilbert scheme, which parametrizes flags of 
    subschemes nestings in two direction, is studied in 
    \cite{Mon22} and \cite{GLMRS}. In the latter article, it is proved 
    that these double nested Hilbert schemes are connected, reduced,
    and pure. The components need not be normal, but the normalizations 
    are isomorphic to a product of symmetric products of $C$.
    For a vector bundle $E$, the Quot scheme 
    $\Q^{0,0}_{d_1,d_2}(E)$ is smooth of dimension $rd_1$.
    In \cite{MR22}, the authors compute the generating 
    function of the motive of the nested Quot scheme 
    of torsion quotients, see also \cite{BFP19}.
    The smoothness of nested Quot scheme of torsion 
    quotients is studied in \cite{MR23} when the 
    underlying scheme is higher-dimensional.

    In recent years, there has been an increasing focus on 
    nested Hilbert schemes on surfaces due to its 
    connection with various areas like  moduli of sheaves, 
    enumerative geometry, representation theory and Lie algebras.
    We refer the reader to some recent works \cite{RS23},
    \cite{RT22}, \cite{GRS23}, \cite{GSY20}, \cite{GT20} 
    and references therein. In \cite{RS23}, the authors 
    study the nested Hilbert scheme $S^{[2,n]}$ and show 
    that this is an integral scheme which is normal and 
    has rational singularities. In particular, it is Cohen-Macaulay. 
	They further pose the question of studying the singularities 
	of the nested Hilbert schemes $S^{[n,m]}$, see \cite[Question 9.5]{RS23}. 
   
    In view of the above results, it is natural to study 
    nested Quot schemes over smooth projective curves when the ranks of the quotients
    are positive. 
{
Such nested Quot schemes are studied in \cite{MR25},
where the authors construct virtual fundamental classes and determine motivic partition functions of the nested Quot schemes.}
In this article we prove some results about irreducibility 
    and singularities of these nested Quot schemes. 
    We will consider two cases: $d_1 \gg d_2 \gg 0$ and $0 \ll d_1\ll d_2$.
    Writing the nested Quot scheme as a relative Quot scheme, 
    we get an expected dimension of the nested Quot scheme $\Q^{k_1,k_2}_{d_1,d_2}(E)$
    \begin{equation}\label{def expected dimension}
    	\ed(d_1,d_2) := [d_1r-k_1e+k_1(r-k_1)(1-g)] + [ d_2k_1-d_1k_2+k_2(k_1-k_2)(1-g)]\,.	
    \end{equation}
    {It can be easily checked that the expected dimension, that is, the quantity $\ed(d_1,d_2)$, coincides with the virtual dimension computed in \cite[Theorem A]{MR25}.} 
We prove the following results.
    \begin{theorem*}[Theorem \ref{irreducibility of nested 2}]
		There exists a number $d(E,k_2)$ such that for all $d_2\geqslant d(E,k_2)$,
		the following holds. There is a number $\xi(E,k_1,k_2,d_2)$ 
		such that if $d_1-d_2 \geqslant \xi(E,k_1,k_2,d_2)$ 
		then the nested Quot scheme 
		$\Q^{k_1,k_2}_{d_1,d_2}(E)$ is irreducible of dimension 
		$\ed{(d_1,d_2)}$, a local complete intersection, integral
		and normal.
	\end{theorem*}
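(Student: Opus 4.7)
The approach is to realize $\Q^{k_1,k_2}_{d_1,d_2}(E)$ as a relative Quot scheme over the base $B := \Q^{k_2}_{d_2}(E)$. Consider the forgetful morphism
\[
\pi_2\colon \Q^{k_1,k_2}_{d_1,d_2}(E) \longrightarrow B, \qquad [E\twoheadrightarrow F_1\twoheadrightarrow F_2] \longmapsto [E\twoheadrightarrow F_2].
\]
For a point $b \in B$ with kernel $K_b := \ker(E \twoheadrightarrow F_2)$ of rank $r-k_2$ and degree $e-d_2$, an intermediate quotient $E\twoheadrightarrow F_1\twoheadrightarrow F_2$ is determined bijectively by the quotient $K_b \twoheadrightarrow \ker(F_1\twoheadrightarrow F_2)$ of rank $k_1-k_2$ and degree $d_1-d_2$. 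Globalizing via the universal kernel $\mc K$ on $C \times B$ yields a canonical isomorphism
\[
\Q^{k_1,k_2}_{d_1,d_2}(E) \;\cong\; \Quot_{C\times B/B}(\mc K,\, k_1-k_2,\, d_1-d_2),
\]
under which $\pi_2$ is the structure morphism and its fiber over $b$ is $\Q^{k_1-k_2}_{d_1-d_2}(K_b)$.

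I would then proceed in three steps. First, by \cite{gs22}, fix $d(E,k_2)$ so that, for $d_2 \geq d(E,k_2)$, the base $B$ is integral, normal, lci of dimension $d_2 r - k_2 e + k_2(r-k_2)(1-g)$; in particular $B$ is projective, so the family of kernels $\{K_b\}_{b\in B}$ is a bounded family of rank $(r-k_2)$, degree $(e-d_2)$ bundles on $C$. Second, produce a threshold $\xi(E,k_1,k_2,d_2)$, uniform over this bounded family, such that whenever $d_1-d_2 \geq \xi$ and for every $b \in B$, the fiber $\Q^{k_1-k_2}_{d_1-d_2}(K_b)$ is irreducible, integral, normal and lci of the expected dimension $(d_1-d_2)(r-k_2) - (k_1-k_2)(e-d_2) + (k_1-k_2)(r-k_1)(1-g)$. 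The thresholds in \cite{PR03, gs22} are stated for fixed targets, but depend only on numerical invariants (rank, degree, and slopes of Harder--Narasimhan sub-quotients) which take bounded values on a bounded family; alternatively, one can argue by upper semicontinuity that the ``bad locus'' in $B$ is closed and force it to be empty by enlarging $d_1-d_2$. Third, assemble: fiber dimensions are constant, so $\dim \Q^{k_1,k_2}_{d_1,d_2}(E) = \dim B + (\textrm{fiber dim}) = \ed(d_1,d_2)$; the lci property is inherited from the two-term perfect obstruction theory of the relative Quot, whose virtual rank equals the actual dimension and realizes the total space locally as a regular embedding of the correct codimension in a smooth ambient scheme. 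Over the smooth locus $B^{\circ} \subset B$, miracle flatness applies, $\pi_2$ becomes flat with integral fibers, and the preimage is integral; since $B^\circ$ is dense and the total space is equidimensional, integrality propagates to $\Q^{k_1,k_2}_{d_1,d_2}(E)$. Normality then follows from Serre's criterion: lci gives $S_2$, and the smooth locus of $\pi_2$ (preimage of $B^\circ$ intersected with the fiberwise smooth loci) is dense, giving $R_1$.

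The main obstacle is establishing the uniform threshold $\xi(E,k_1,k_2,d_2)$ in the second step. The arguments of Popa--Roth and Gangopadhyay--Sebastian stratify $\Q^{k_1-k_2}_{d_1-d_2}(K_b)$ by the Harder--Narasimhan type of the universal kernel and bound each stratum's dimension in terms of slope data of $K_b$; ensuring these bounds are uniform across the bounded family $\{K_b\}$ requires either tracking how the thresholds depend on numerical invariants of $K_b$ (which remain bounded over the family) or passing to a semi-continuity argument on $B$ showing that the bad locus is a closed subset that can be made empty once $d_1-d_2$ is large enough. Once this uniform bound is in hand, the rest of the argument is a routine fibration-plus-descent procedure.
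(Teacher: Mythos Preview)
Your approach coincides with the paper's: realize $\Q^{k_1,k_2}_{d_1,d_2}(E)$ as the relative Quot scheme $\Quot_{C\times B/B}(\mc K,\,k_1-k_2,\,d_1-d_2)$ over $B=\Q^{k_2}_{d_2}(E)$ via the universal kernel $\mc K$, use \cite{gs22} to control $B$, and then invoke a family version of the Popa--Roth arguments on the fibers. What you identify as ``the main obstacle'' (a threshold uniform in $b\in B$) is exactly the content of the paper's Theorem~\ref{theorem irreducibility of relative quot}, which is proved by relativizing \cite[\S 6]{PR03} over an irreducible base carrying a locally free family; the theorem also gives that $\pi_2$ is an lci morphism and flat over all of $B$ (not just $B^{\circ}$), which streamlines your assembly.

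There is one genuine gap in your normality step. Density of the smooth locus only gives $R_0$; for $R_1$ you need the singular locus to have codimension $\geqslant 2$. This is not automatic from fiberwise normality plus flatness: over $B^{\circ}$ you still have to bound the locus where $H^1(S_F^\vee\otimes F)\neq 0$ in codimension $\geqslant 2$, and this requires an additional dimension estimate (stratifying by possible images of nonzero maps $F\to K_b\otimes\omega_C$). The paper handles this as a separate assertion, Theorem~\ref{theorem irreducibility of relative quot}(5), which bounds the singular locus of the relative Quot in codimension $\geqslant 2$ once $d_1-d_2$ is enlarged further and the base itself is $R_1$; this is where normality of $B$ is actually used.
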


    \begin{theorem*}[Theorem \ref{irreducibility of nested Quot scheme 2}, Theorem \ref{irreducibility of nested Quot scheme}]
	There exists a number $\gamma(E,k_1,k_2)$ such that  
	for all $d_1\geqslant \gamma(E,k_1,k_2)$, the following holds. There is a number           $\beta(E,k_1,k_2,d_1)$, 
	such that if $d_2 \geqslant \beta(E,k_1,k_2,d_1)$, then 
	\begin{enumerate}
		\item The nested Quot scheme $\Q^{k_1,k_2}_{d_1,d_2}(E)$ is irreducible
		of dimension $\ed(d_1,d_2)$. 
		\item The natural map $\Q^{k_1,k_2}_{d_1,d_2}(E)\to \Q^{k_1}_{d_1}(E)$ is a local complete
		intersection morphism. In particular, it follows that $\Q^{k_1,k_2}_{d_1,d_2}(E)$ 
		is a local complete intersection. 
		\item The nested Quot scheme $\Q^{k_1,k_2}_{d_1,d_2}(E)$ is an integral scheme.
		\item $\Q^{k_1,k_2}_{d_1,d_2}(E)$ is normal if $k_1+k_2>r$ and $k_1-k_2 \geqslant 2$.
	\end{enumerate}
    \end{theorem*}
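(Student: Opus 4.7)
The plan is to study the forgetful morphism $\pi\colon \Q^{k_1,k_2}_{d_1,d_2}(E)\to\Q^{k_1}_{d_1}(E)$, whose fibre over $[E\twoheadrightarrow F_1]$ is exactly the Quot scheme $\Q^{k_2}_{d_2}(F_1)$. I would choose $\gamma(E,k_1,k_2)$ so that for $d_1\geqslant\gamma$ the base $\Q^{k_1}_{d_1}(E)$ already has all the properties supplied by \cite{PR03,gs22}: it is integral, normal, l.c.i.\ of the expected dimension, with singular locus of codimension $\geqslant 2$. The first technical step is then to produce a single $\beta(E,k_1,k_2,d_1)$ so that the analogous one-step theorems apply \emph{uniformly} across the family of fibres: for every $[E\twoheadrightarrow F_1]\in\Q^{k_1}_{d_1}(E)$ and every $d_2\geqslant\beta$, the Quot scheme $\Q^{k_2}_{d_2}(F_1)$ is integral, normal, l.c.i.\ of the expected dimension $d_2k_1-d_1k_2+k_2(k_1-k_2)(1-g)$. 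Such a uniform $\beta$ exists because the family $\{F_1\}$ is bounded and the numerical inputs governing the one-step theorems are semicontinuous on bounded families.

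For (2), at a point $[E\to F_1\to F_2]$ with $K=\ker(F_1\to F_2)$, the relative tangent and obstruction spaces of $\pi$ are $\Hom(K,F_2)$ and $\textrm{Ext}^1(K,F_2)$, so locally the relative Quot scheme is cut out in a smooth relative parameter space by $\dim\textrm{Ext}^1(K,F_2)$ equations. The previous step guarantees that each fibre realises the expected dimension $\dim\Hom(K,F_2)-\dim\textrm{Ext}^1(K,F_2)$, so at every point these equations form a regular sequence; hence $\pi$ is a relative l.c.i.\ morphism of the predicted relative dimension, which gives (2) and, in particular, flatness of $\pi$. With $\pi$ flat, integral base, and geometrically integral equidimensional fibres, the total space is integral of dimension $\ed(d_1,d_2)$, proving (1) and (3).

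The main obstacle is (4). Since the scheme is Cohen--Macaulay (from l.c.i.), Serre's $S_2$ is automatic, and normality reduces to showing that the singular locus has codimension $\geqslant 2$, i.e.\ Serre's $R_1$. The singular locus decomposes as the union of $\pi^{-1}(\mathrm{Sing}\,\Q^{k_1}_{d_1}(E))$, already of codimension $\geqslant 2$ by pullback under the equidimensional $\pi$ together with the base estimate of \cite{gs22}, and the locus where the fibrewise obstruction $\textrm{Ext}^1(K,F_2)$ is nonzero. To bound the latter I would stratify $\Q^{k_1,k_2}_{d_1,d_2}(E)$ by the Harder--Narasimhan polygon (or at least the degree) of the kernel $K$ of rank $k_1-k_2$, estimate the dimension of each stratum by Brill--Noether-type cohomology arguments, and show each contributes codimension $\geqslant 2$ in the total space. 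The assumption $k_1-k_2\geqslant 2$ is precisely what rules out the rank-one $K$ case, where a Brill--Noether locus of line-subbundle-type degenerations would otherwise force codimension one; the condition $k_1+k_2>r$ in turn yields the cohomological positivity needed to close the dimension count on the generic stratum. Carrying out this stratified codimension estimate in parallel with the analogous argument in \cite{gs22} is the delicate technical heart of the proof.
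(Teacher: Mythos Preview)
Your approach has a genuine gap at the very first technical step. You claim that for $d_2\geqslant\beta$ and \emph{every} point $[E\twoheadrightarrow F_1]\in\Q^{k_1}_{d_1}(E)$, the fibre $\Q^{k_2}_{d_2}(F_1)$ is integral of the expected dimension. This is false: the sheaf $F_1$ need not be locally free, and the paper proves explicitly (Proposition~\ref{reducibility of Quot scheme}) that if $F_1$ has torsion then $\Q^{k_2}_{d_2}(F_1)$ is \emph{reducible} for $d_2\gg 0$. The one-step theorems from \cite{PR03,gs22} only apply to quotients of locally free sheaves, and the family $\{F_1\}$ is not a family of vector bundles. So ``integral base, flat with integral fibres'' cannot be made to work; the fibres over the locus $Z_\delta=\{[E\to F_1]:\len(\Tor F_1)=\delta\}$ are genuinely bad, and this is precisely the difficulty the paper has to overcome.

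The paper's route is quite different from yours. Restricted to the open set $\Q^{k_1}_{d_1}(E)^0$ where $F_1$ is locally free, the universal quotient $\F_1$ \emph{is} a vector bundle and Theorem~\ref{theorem irreducibility of relative quot} gives an irreducible open set $U\subset\Q^{k_1,k_2}_{d_1,d_2}(E)$ of the right dimension. The complement is stratified not by Harder--Narasimhan data of $K$, but by the pair $(\delta,\mu)=(\len\Tor F_1,\len\Tor F_2)$, and the hard work (Lemmas~\ref{lemma dimension relative Quot over Zdelta} and~\ref{dimension Ydeltamu < expd}) is a direct dimension estimate for each stratum $Y_{\delta,\mu}$. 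In that estimate the coefficient of $\mu$ is $(k_1+k_2-r)$ and the coefficient of $\delta$ is $(k_1-k_2)$; this is where the two numerical hypotheses actually enter, not through Brill--Noether behaviour of $K$ as you suggest. To drop the hypothesis $k_1+k_2>r$ for parts (1)--(3), the paper does not refine this estimate but instead uses a separate trick (Theorem~\ref{irreducibility of nested Quot scheme 2}): replace $E$ by $E\oplus\O_C^{\oplus l}$ with $l$ large enough that $k_1'+k_2'>r'$, apply the previous case, and exhibit $\Q^{k_1,k_2}_{d_1,d_2}(E)$ as a surjective image of an open set of the bigger nested Quot scheme. Your outline for (2) via Koll\'ar's criterion is correct once (1) is known, and agrees with the paper's argument.
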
 
    \noindent
    One of the ingredients used to prove the above results is the following Theorem, 
	which may be viewed as a generalization of \cite{PR03} to a family 
    of vector bundles.
    Let $T$ be a scheme of finite type over $\C$ and let $\A$ be 
    a vector bundle on $C \times T$. 
    Let $\Q^k_{d}(\A)$ denote the relative 
    Quot scheme $\Quot_{C \times T/T}(\A,k,d)$. 
    \begin{theorem*}[Theorem \ref{theorem irreducibility of relative quot}]
		Let $T$ be an irreducible scheme. Let $\A$ be a locally 
		free sheaf on $C\times T$ of rank $r$, such that each $\A_t$ has
		degree $e$. There is a number $\alpha(\A,k)$ such that if 
		$d \geqslant \alpha(\A,k)$ then the structure morphism $\pi:\Q^k_{d}(\A)\to T$
		has the following properties
		\begin{enumerate}
			\item The fibers are irreducible of dimension $dr-ke+k(r-k)(1-g)$.
			\item The relative Quot scheme $\Q^k_{d}(\A)$ is 
			irreducible of dimension $dr-ke+k(r-k)(1-g)+\dim T$. 
			\item $\pi$ is a local complete intersection morphism and flat.
			\item If $T$ is reduced, then $\Q^k_{d}(\A)$ is generically smooth.
			\item Let $T$ be reduced and assume the singular locus of $T$ 
			has codimension $\geqslant 2$. There is $\alpha'(\A,k)$ 
			such that for all $d\geqslant \alpha'(\A,k)$ the singular locus 
			of $\Q^k_{d}(\A)$ has codimension $\geqslant 2$. 
		\end{enumerate}
    \end{theorem*}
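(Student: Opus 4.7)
The plan is to run the Popa--Roth argument relatively over $T$, reducing everything to the single-bundle case by exploiting boundedness of the family $\{\A_t\}_{t\in T}$ together with the standard embedding of a Quot scheme into a Grassmannian as a regular zero locus.

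First I would note that, since $T$ is of finite type and $\A$ is a coherent sheaf on $C\times T$, the family $\{\A_t\}$ is bounded; in particular the PR03-style cohomological inputs needed to run the argument on each $\A_t$ (vanishing of $H^1$ for all quotients of rank $k$ and degree $d$ once a twist by $\O(N)$ is applied, generation by global sections, vanishing of $\Ext^1$ of the generic kernel-quotient pair) can be arranged uniformly in $t$. This yields a single number $\alpha(\A,k)$ so that, for $d\geqslant \alpha(\A,k)$, the fiber $\Q^k_d(\A)_t=\Q^k_d(\A_t)$ is irreducible and generically smooth of dimension $dr-ke+k(r-k)(1-g)$ for every $t\in T$, which is (1).

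For (2) and (3) I would then choose $N\gg 0$ (uniformly) so that both $\pi_{T*}\A(N)$ and the direct image of the universal quotient $\pi_*\mc F(N)$ on $\Q^k_d(\A)$ are locally free of constant rank $h=d+kN+k(1-g)$, and realize a relative embedding
\[
\Q^k_d(\A)\hookrightarrow \grass_T\bigl(h,\, \pi_{T*}\A(N)\bigr).
\]
The image is cut out locally by a section of a vector bundle whose rank equals the expected codimension, coming from the condition that the quotient on fibers of the Grassmann bundle is compatible with the $\O_C$-module structure. Since every fiber of $\pi$ has exactly the expected dimension, this section is regular in each fiber; hence the Koszul complex on the section is a $T$-flat resolution of $\O_{\Q^k_d(\A)}$, giving simultaneously flatness and the local complete intersection property of $\pi$, which is (3). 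For (2), irreducibility of the total space then follows from irreducibility of each fiber of constant dimension over the irreducible $T$, by a standard upper semicontinuity / equidimensionality argument for the proper map $\pi$.

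For (4), when $T$ is reduced the smooth locus $T^\circ\subset T$ is open and dense; over $T^\circ$ the morphism $\pi$ is flat between a Cohen--Macaulay total space and a smooth base with generically smooth fibers, so $\pi^{-1}(T^\circ)$ is generically smooth and hence so is $\Q^k_d(\A)$. For (5) I would upgrade the fiberwise input: increasing $\alpha$ to some $\alpha'(\A,k)$, the PR03 argument can be read off to show that for $d\geqslant \alpha'(\A,k)$ the singular locus of every fiber has codimension $\geqslant 2$ (the bad locus in the Grassmann bundle where the defining section fails to be transverse drops by at least two). Then the singular locus of $\Q^k_d(\A)$ is contained in $\pi^{-1}(T_{\mathrm{sing}})\cup \Sigma$, where $\Sigma$ is the relative singular locus; flatness with equidimensional fibers makes $\pi^{-1}(T_{\mathrm{sing}})$ of codimension $\geqslant 2$, and $\Sigma$ has codimension $\geqslant 2$ since it does so on each fiber over $T^\circ$.

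The main obstacle is purely technical: verifying that the bounds on $d$ extracted from the PR03 proof depend only on invariants that vary in a bounded way with $t\in T$, so that the passage from a pointwise $\alpha(\A_t,k)$ to a uniform $\alpha(\A,k)$ (and $\alpha'(\A,k)$) is legitimate. Once this boundedness bookkeeping is in place, everything else is a clean application of the Grassmann/Koszul description of Quot schemes in the family setting.
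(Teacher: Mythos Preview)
Your overall strategy---relativize the Popa--Roth argument using boundedness of $\{\A_t\}$---is exactly what the paper does, and your treatment of (1), (2), and (4) matches theirs. The real divergence is in (3). The paper does not use a Grassmannian embedding at all; instead it invokes Koll\'ar's deformation-theoretic criterion (Rational Curves, Chapter~I, Theorem~5.17), which says that when the local dimension of the relative Quot scheme equals $\hom(S_F,F)-\ext^1(S_F,F)+\dim T$ the morphism $\pi$ is a local complete intersection at that point; flatness is then checked by a short local-ring argument (lifting the fiberwise regular sequence via Stacks Tag~00MG). Your Grassmannian/Koszul route is legitimate and closer in spirit to BDW, but it requires verifying that the relative Quot scheme is cut out in the Grassmann bundle by a section of a bundle of exactly the expected rank, which is extra bookkeeping; Koll\'ar's criterion sidesteps this entirely.

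For (5) your decomposition $\mathrm{Sing}(\Q^k_d(\A))\subset \pi^{-1}(T_{\mathrm{sing}})\cup\Sigma$ agrees with the paper, but the crucial input---that the locus $\{\ext^1(S_F,F)\neq 0\}$ has codimension $\geqslant 2$ in each fiber for $d$ large---is \emph{not} in PR03 and cannot simply be ``read off''; PR03 only gives generic smoothness. The paper supplies an actual argument here. On the locally-free-quotient locus, nonvanishing of $\ext^1(S_F,F)$ forces (via Serre duality) a nonzero map $F\to \A_t\otimes\omega_C$, so $F$ admits a locally free quotient whose rank and degree lie in a fixed finite list; the locus of such $F$ is bounded in dimension by $d(r-1)+{\rm const}$, hence has codimension $\geqslant 2$ once $d\gg 0$. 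On the torsion stratum $\tilde Z_{\geqslant 1}$ (already irreducible of codimension $\geqslant k\geqslant 1$) one exhibits a general point with $\ext^1=0$, namely $F=F_0\oplus\C_c$ with $F_0$ stable of degree $d-1$, so the intersection with the singular locus is proper. Your sketch for (5) has the right shape, but the PR03 citation should be replaced by a dimension count of this kind.
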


{
Given integers $l>2,k_1,\dots,k_l,d_1,\dots,d_l$ 
such that $r > k_1 > \dots > k_l>0$,
the nested Quot scheme $\Q^{k_1,\dots,k_l}_{d_1,\dots,d_l}(E)$ is defined similarly.
Theorem \ref{irreducibility of nested 2} generalises easily 
to prove a similar result for the nested Quot scheme $\Q^{k_1,\dots,k_l}_{d_1,\dots,d_l}(E)$ 
in the case $d_1 \gg d_2 \gg \cdots \gg d_l \gg 0$.
We explain this in Remark \ref{remark irreducibility of nested generalised}.
It would be interesting to know if the results in 
Theorem \ref{irreducibility of nested Quot scheme 2} can be 
generalised to $\Q^{k_1,\dots,k_l}_{d_1,\dots,d_l}(E)$ 
in the case $0 \ll d_1 \ll d_2 \ll... \ll d_l$.
}

We briefly discuss the strategy and the organization of the paper.
In Section 2, we prove some preliminary lemmas.
In Section 3, we prove Theorem \ref{theorem irreducibility of relative quot}.
{We remark that in Section 3, the techniques in \cite[Section 6]{PR03} 
naturally generalize to the relative setting.
However, for the sake of completeness, we include a detailed proof.}
In Section 4, we prove Theorem \ref{irreducibility of nested 2}.
In Section 5, our main result is Theorem \ref{irreducibility of nested Quot scheme 2}.
Here we write the nested Quot scheme $\Q^{k_1,k_2}_{d_1,d_2}(E)$ as a 
relative Quot scheme $\Q^{k_2}_{d_2}(\F_1)$, where 
$\F_1$ denotes the universal quotient over $C \times \Q^{k_1}_{d_1}(E)$.
Here $\F_1$ is not locally free,
so we cannot apply Theorem \ref{theorem irreducibility of relative quot} directly.
However we can apply Theorem $\ref{theorem irreducibility of relative quot}$ 
for the open subset of $\Q^{k_1}_{d_1}(E)$ where 
the sheaf $\F_1$ is locally free.
This gives us an open subset of the nested Quot scheme 
$\Q^{k_1,k_2}_{d_1,d_2}(E)$, which is irreducible of expected dimension. 
Let $Y$ denote the complement of this open locus.
We show that points of $Y$ cannot be general in any component of $\Q^{k_1,k_2}_{d_1,d_2}(E)$.
Computing the dimension upper bound for $Y$ 
is a crucial step to prove the main result
and this is done through several lemmas.

\phantom{.}

\noindent
\textbf{Acknowledgements.}
The research of the first author is supported by the Prime Minister's Research fellowship (PMRF ID 1301167) funded by the Ministry of Education, Government of India.
The authors would like to express their thanks to the referee
for an extremely careful reading and for detailed suggestions which greatly improved the manuscript.\\\\
{\bf Notations.}
For the convenience of the reader, we collect below the various numerical constants that appear throughout the paper, indicating where they are defined.
The dependence of each constant is indicated by its parameters. The objects in the parameters will be clear from the context.
\setlength{\columnsep}{1.5cm}
\begin{multicols}{2}
	\noindent
    \begin{tabular}{p{0.2\textwidth}ll}
       $m_ q(\A,k)$& Lemma \ref{degree bound} \\
       $m_s(\A,k)$  & Lemma \ref{degree bound} \\
       $m(G,k)$ & Definition \ref{def m(G,k)}\\
       $m_{\max}(\A,k)$ & Remark \ref{define m_min and m_max} \\
       $m_{\min}(\A,k)$ & Remark \ref{define m_min and m_max} \\
       $\alpha_1(\A,k)$ & Lemma \ref{lemma 6.1}\\
       $\alpha_2(\A,k,k_0,d_0)$ & Lemma \ref{lemma 6.3}\\
       $\alpha_3(\A,k)$ & Lemma \ref{H^1=0}\\
       $\alpha_4(\A,k)$ & Proposition \ref{theoerm Zdelta}\\
    \end{tabular}
    
    \columnbreak 
    
    \begin{tabular}{ll}
       
       $\alpha(\A,k)$ & Theorem \ref{theorem irreducibility of relative quot} \\
       $\alpha'(\A,k)$ & Theorem \ref{theorem irreducibility of relative quot}(5) \\
       $\xi(E,k_1,k_2,d_2)$ & Theorem \ref{irreducibility of nested 2}\\
       $d(E,k_1)$ & Remark \ref{recall d(E,k_1)}\\
       $\beta'(E,k_1,k_2,d_1)$ & Lemma \ref{lemma irreducible open set U}\\
       $\nu(E,k_1,k_2,d_1,\delta)$ & Lemma \ref{lemma dimension relative Quot over Zdelta}\\
       $\gamma(E,k_1,k_2)$ & Lemma \ref{dimension Ydeltamu < expd}\\
       $\beta''(E,k_1,k_2,d_1)$ & Lemma \ref{dimension Ydeltamu < expd}\\
       $\beta(E,k_1,k_2,d_1)$ & Lemma \ref{irreducibility of nested Quot scheme}
    \end{tabular}
\end{multicols}
	
	\section{Preliminaries}
	{
        Let $C$ be a smooth projective curve over $\C$ of genus 
	$g \geqslant 1$. 
        Let $T$ be an irreducible scheme of finite type over $\C$ and 
        let $\A$ be a coherent sheaf on $C \times T$
	which is flat over $T$. 
	Then $\chi(\A_t)$ is constant as a function of $t \in T$. }
	From the Hilbert polynomial we see that the rank and degree of $\A_t$ 
	are independent of $t \in T$. 
	Let $r:={\rm rank}(\A_t)$ and let $e:=\deg(\A_t)$ for all $t \in T$.

	\begin{lemma}\label{degree bound}
		There are numbers $m_q(\A,k)$ and $m_s(\A,k)$ such that the 
		following happens. 
		\begin{enumerate}
			\item Let $F$ be a sheaf {on $C$} of rank $k$, 
			which is a quotient of $\A_t$ for some $t \in T$. 
			Then $\deg(F)\geqslant m_q(\A,k)$.
			\item Let $F$ be a sheaf {on $C$} of rank $k$ which is a subsheaf of $\A_t$ 
			for some $t \in T$. Then $\deg(F) \leqslant m_s(\A,k)$.
		\end{enumerate}
	\end{lemma}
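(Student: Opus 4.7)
Plan: Both bounds will follow from uniform bounds on the Harder--Narasimhan (HN) slopes of the fibers $\A_t$, for $t \in T$. For any coherent sheaf $G$ on $C$, write $\tau(G)$ for its torsion subsheaf, $G^{tf} := G/\tau(G)$ for its torsion-free quotient, and $\mu_{\max}(G^{tf})$, $\mu_{\min}(G^{tf})$ for the slopes of the first and last factors of the HN filtration of $G^{tf}$. For a rank-$k$ subsheaf $F \subset \A_t$, the image of $F$ in $\A_t^{tf}$ is a rank-$k$ subsheaf of a torsion-free sheaf and hence has degree at most $k\mu_{\max}(\A_t^{tf})$, while the kernel $F \cap \tau(\A_t)$ is torsion of length at most $\len(\tau(\A_t))$; this gives
\[
\deg(F) \leqslant \len(\tau(\A_t)) + k\,\mu_{\max}(\A_t^{tf}).
\]
Dually, any rank-$k$ quotient $\A_t \twoheadrightarrow F$ induces a rank-$k$ torsion-free quotient $\A_t^{tf} \twoheadrightarrow F^{tf}$, so $\deg(F) \geqslant \deg(F^{tf}) \geqslant k\,\mu_{\min}(\A_t^{tf})$.

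It therefore suffices to bound $\mu_{\max}(\A_t^{tf})$ from above, $\mu_{\min}(\A_t^{tf})$ from below, and $\len(\tau(\A_t))$ from above, all uniformly in $t \in T$. The key ingredient is Shatz's semicontinuity theorem in families (see, e.g., Huybrechts--Lehn, Theorem~2.3.2): the HN polygon of $\A_t^{tf}$ is upper semicontinuous in $t$, so $T$ admits a finite stratification by locally closed subsets over which the HN type is constant. Because $T$ is a noetherian scheme, only finitely many HN polygons occur among the fibers; taking the maximum and minimum of the corresponding extreme slopes yields the required uniform bounds. The torsion length $t \mapsto \len(\tau(\A_t))$ is a constructible and upper semicontinuous function on the noetherian scheme $T$ (here one uses that the rank and degree of $\A_t$ are constant), and is therefore likewise uniformly bounded. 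Setting $m_q(\A,k) := k \cdot \inf_t \mu_{\min}(\A_t^{tf})$ and $m_s(\A,k) := \sup_t \len(\tau(\A_t)) + k \cdot \sup_t \mu_{\max}(\A_t^{tf})$ then completes the argument.

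The main conceptual step is the invocation of Shatz's theorem to guarantee finiteness of the HN types appearing in the family $\{\A_t\}_{t \in T}$; the individual slope inequalities $\mu(F) \leqslant \mu_{\max}$ and $\mu(F) \geqslant \mu_{\min}$ are standard consequences of the HN filtration. The only delicate point is the possible torsion in the fibers, which is handled by separating $\A_t^{tf}$ (controlled by HN data) from $\tau(\A_t)$ (controlled by its length), as above.
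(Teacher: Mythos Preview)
Your approach via Harder--Narasimhan slopes is correct and genuinely different from the paper's. The paper instead invokes boundedness of the family $\{\A_t\}_{t\in T}$ (\cite[Lemma~1.7.6]{HL}) to produce a single sheaf $A$ on $C$ surjecting onto every $\A_t$; then any rank-$k$ quotient $F$ of any $\A_t$ is already a quotient of $A$, and an elementary Euler-characteristic estimate $\chi(F)=\chi(A)-\chi(K_F)\geqslant \chi(A)-\h^0(A)$ gives $\deg(F) \geqslant \deg A + (\rk A - k)(1-g) - \h^0(A) =: m_q(\A,k)$. Part~(2) is then deduced from part~(1) by passing to the cokernel, yielding $m_s(\A,k) = e - m_q(\A,r-k)$. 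This is shorter and avoids HN machinery altogether; your argument is more structural and identifies the bounds with the extreme slopes of the family. One caveat on your side: the appeal to Shatz's theorem (\cite[Theorem~2.3.2]{HL}) is not quite literal, since that result is stated for flat families of \emph{pure} sheaves, whereas $\{\A_t^{tf}\}$ need not form a $T$-flat family when the fibers $\A_t$ carry torsion (and indeed the HN polygon of $\A_t^{tf}$ can fail to be upper semicontinuous in $t$). The finiteness of HN types you need is still true---stratify $T$ by torsion length, apply a flattening stratification for the torsion-free quotient, and invoke Shatz on each stratum---and in every application of this lemma in the paper $\A$ is in fact locally free, so $\A_t^{tf}=\A_t$ and the issue disappears; but the gap deserves a sentence.
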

	\begin{proof}
        The family of sheaves $\{\A_t\}_{t \in T}$ is bounded in the sense of \cite[Definition 1.7.5]{HL}.
		Using \cite[Lemma 1.7.6 (iii)]{HL}, we get a sheaf $A$ on $C$ such that each $\A_t$ is a quotient of $A$.
		If $F$ is a quotient of $\A_t$ for some $t \in T$, then $F$ is also a quotient of $A$. 
		Let $K_F$ be the kernel of the quotient $A \to F$. 
		Then we have $\chi(F) = \chi(A) - \chi(K_F) \geqslant \chi(A) - \h^0(K_F)$.
		Using Rieman-Roch formula and the fact $\h^0(K_F) \leqslant \h^0(A)$, we get
		$$ \deg (F) \geqslant \deg A+(\rk(A)-k)(1-g)-\h^0(A)\,.$$
		Define $m_q(\A,k):= \deg A+(\rk(A)-k)(1-g)-\h^0(A)$.
		This proves the first assertion. 
		
		Let $F$ be a subsheaf of $\A_t$ for some $t \in T$ and 
		let $B_F$ be the cokernel.
		So $B_F$ is a quotient of $\A_t$ of rank $r-k$.
		By the previous part we have  $\deg(B_F)\geqslant m_q(\A,r-k)$.
		We have $\chi(F) = \chi(\A_t) - \chi(B_F) \leqslant \chi(\A_t) - m_q(\A,r-k) - (r-k)(1-g)$.
		Using Riemann-Roch we get
		$$\deg F \leqslant e-m_q(\A,r-k)\,.$$
		Define $m_s(\A,k) := e-m_q(\A,r-k)$.
		This proves the Lemma. 
	\end{proof}

	\begin{definition}\label{def m(G,k)}
		Let $G$ be a sheaf of rank $r$ on $C$.
		For each $k$ with $0<k < r$, define
		$$ m(G,k) = \min_{\rk(F)=k} \left\{ \deg(F) : F \textrm{ is a quotient of } G\right\}$$
	\end{definition}
	
	\begin{lemma}\label{d_k semicontinuity}
		Let $T$ be a scheme of finite type over $\C$ and let $\G$ be a 
		coherent sheaf on $C\times T$ which is flat over $T$.
		Fix an integer $k$ such that $0< k < \rk(\G)$. Then
		the function $t \mapsto m(\G_t,k)$ is lower semicontinuous as a 
		function from $T$ to $\Z$.
		Hence the set $\{m(\G_t,k)\}_{t \in T}$ is 
		finite for a fixed $k$. 
	\end{lemma}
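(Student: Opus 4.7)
The plan is to exploit properness of the relative Quot scheme. For each integer $d$, consider the relative Quot scheme $\Quot_d := \Quot_{C \times T/T}(\G, k, d)$ parametrizing rank-$k$, degree-$d$ quotients. Since $\G$ is flat over $T$, this relative Quot scheme exists and is projective (hence proper) over $T$, so its image $Z_d := \pi_d(\Quot_d) \subset T$ is a closed subset. By construction $m(\G_t, k) \leqslant d$ if and only if $t \in \bigcup_{d' \leqslant d} Z_{d'}$.

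The key observation is that by Lemma \ref{degree bound} applied to the family $\G$, every rank-$k$ quotient of any $\G_t$ has degree at least $m_q(\G,k)$; thus the seemingly infinite union $\bigcup_{d' \leqslant d} Z_{d'}$ reduces to the finite union $\bigcup_{d' = m_q(\G,k)}^{d} Z_{d'}$, which is closed in $T$. Since for every integer $c$ the set $\{t \in T : m(\G_t,k) \leqslant c\}$ is closed, the function $t \mapsto m(\G_t,k)$ is lower semicontinuous.

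For the finiteness statement, first note $\bigcup_{d \in \Z} Z_d = T$: every coherent sheaf on a smooth curve of rank $\rk(\G) > k$ admits rank-$k$ quotients (e.g., mod out the torsion and then a rank $\rk(\G)-k$ subsheaf). Since $T$ is Noetherian, the ascending chain of closed subsets $\bigcup_{d' = m_q(\G,k)}^{d} Z_{d'}$ (indexed by $d \geqslant m_q(\G,k)$) must stabilize, and its eventual value is $T$. This furnishes a uniform upper bound for $m(\G_t, k)$, and together with the uniform lower bound $m_q(\G, k)$, we conclude that $\{m(\G_t, k) : t \in T\} \subset \Z$ is finite.

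I do not anticipate a serious obstacle: the argument is a standard properness-plus-Noetherianity semicontinuity pattern. The only mildly delicate point is that $\G$ is merely flat, not locally free, but this does not affect the existence and projectivity of the relative Quot scheme, nor the lower bound from Lemma \ref{degree bound}, which was established in that generality.
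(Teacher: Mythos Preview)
Your argument is correct and follows the standard semicontinuity pattern via properness of the relative Quot scheme. The paper itself does not give a proof of this lemma; it simply cites \cite[Lemma 2.2]{Ras24}. So there is no in-paper proof to compare against, but your direct argument is exactly the expected one and would serve as a self-contained replacement for the citation.

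One minor point worth tightening: you invoke Lemma \ref{degree bound} to obtain the uniform lower bound $m_q(\G,k)$, but that lemma is stated in the paper under the standing hypothesis that $T$ is irreducible (so that $\rk(\G_t)$ and $\deg(\G_t)$ are constant). In the present lemma $T$ is only assumed of finite type. This is harmless, since $T$ has finitely many irreducible components and you can apply Lemma \ref{degree bound} to each and take the minimum of the resulting bounds; alternatively, the proof of Lemma \ref{degree bound} goes through verbatim once one notes that the family $\{\G_t\}$ is bounded regardless of irreducibility. It would be worth adding a clause to this effect.
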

	\begin{proof}
		See Lemma 2.2 of \cite{Ras24}.
	\end{proof}
	
	\begin{remark}\label{define m_min and m_max}
		Lemma \ref{d_k semicontinuity} proves that the set 
		$\{m(\A_t,k)\}_{t \in T}$ is finite for a fixed $k$.
		{Define 
		$$m_{\max}(\A,k) = \max_{t \in T} \left\{ m(\A_t,k) \right\}
		\quad \text{ and } \quad 
		m_{\min}(\A,k) = \min_{t \in T} \left\{ m(\A_t,k) \right\}\,.$$}
		If $d < m_{\min}(\A,k)$ then the Quot 
		scheme $\Quot_{C\times T/T}(\A,k,d)$ is empty. 
		For the structure map 
		$\Quot_{C\times T/T}(\A,k,d)\to T$ to be surjective on closed points,
		we need that $d\geqslant m_{\max}(\A,k)$. 
	\end{remark}
	
	From here on, we denote the relative Quot scheme 
	$\Quot_{C \times T/T}(\A,k,d)$ by $$\Q^k_{d}(\A)\,.$$
	A closed point of $\Q^k_{d}(\A)$ corresponds to a tuple 
	$(t, [\varphi: \A_t \to F])$, 
	where $t$ is a closed point of $T$ and
	$\vp : \A_t \to F$ is a quotient in $\Quot_{C/\C}(\A_t,k,d)$.
	Let $S_F$ denote the kernel of $\vp$. We have the following Lemma.
	
	\begin{lemma}\label{lemma dimension bound of relative quot}
		Let $T$ be irreducible. Let $\A$ be a coherent sheaf on $C\times T$ which is flat over $T$. Then 
		\begin{equation}\label{dimension bound relative}
			\hom(S_F,F) \geqslant \dim_{(t,\varphi)} \Q^k_{d}(\A) -\dim T \geqslant  \hom(S_F,F) - {\rm ext}^1(S_F,F) \,.
		\end{equation}
	\end{lemma}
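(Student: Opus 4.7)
The plan is to analyse the structure morphism $\pi \colon \Q^k_{d}(\A) \to T$ at the closed point $(t,\varphi)$ via the relative deformation theory of coherent quotients, establishing the two inequalities in \eqref{dimension bound relative} separately. The central input from standard deformation theory is that the Zariski tangent space of the absolute Quot scheme $\Quot_{C/\C}(\A_t,k,d)$ at $[\varphi]$ is $\Hom(S_F,F)$, and the obstructions to deforming the quotient $\varphi$ on a fixed $\A_t$ live in $\text{Ext}^1(S_F,F)$.

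For the upper bound, the tangent-space computation gives $\dim_{[\varphi]} \pi^{-1}(t) \leqslant \hom(S_F,F)$. Combining with the general fibre-dimension inequality $\dim_{(t,\varphi)} \Q^k_{d}(\A) \leqslant \dim_{[\varphi]} \pi^{-1}(t) + \dim_t T$ for morphisms of finite type, together with $\dim_t T = \dim T$ (by irreducibility of $T$), yields the left inequality.

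For the lower bound, the plan is to produce a local Kuranishi-type model. Concretely, in a neighbourhood of $(t,\varphi)$ (Zariski or étale, as needed), the relative deformation theory of the quotient $\varphi$ over $T$ realises $\Q^k_{d}(\A)$ as a closed subscheme of a scheme $M$ smooth over $T$ of relative dimension $\hom(S_F,F)$, cut out by at most $\ext^1(S_F,F)$ equations. Smoothness of $M/T$ together with irreducibility of $T$ gives $\dim_{(t,\varphi)} M = \dim T + \hom(S_F,F)$, and Krull's height theorem applied to the $\ext^1(S_F,F)$ defining equations then produces $\dim_{(t,\varphi)} \Q^k_{d}(\A) \geqslant \dim T + \hom(S_F,F) - \ext^1(S_F,F)$, the right inequality.

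The main technical obstacle is constructing the local model $M$ with the correct relative dimension and verifying the bound on the number of defining equations. This can be carried out via an explicit embedding of $\Q^k_{d}(\A)$ into a relative Grassmannian $\grass_T(p_*\A(n),\kappa)$ for $n \gg 0$ (available by Grothendieck's boundedness of quotients with fixed Hilbert polynomial, once $n$ is chosen so that $p_*\A(n)$ is locally free on $T$), followed by a cohomological slicing argument using the long exact sequence obtained from $0 \to S_F \to \A_t \to F \to 0$ to carve a smooth-over-$T$ subscheme of the correct relative dimension out of the Grassmannian. Alternatively, one can invoke the relative tangent-obstruction theory for the Quot functor directly à la Schlessinger.
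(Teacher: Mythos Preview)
Your proposal is correct. For the right inequality (the lower bound), your Kuranishi-type local model is exactly the content of \cite[Chapter~1, Theorem~5.17]{Kol96}, which the paper simply cites as a black box; you have sketched its proof rather than invoked it, but the underlying argument is the same.

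For the left inequality (the upper bound), your route is genuinely different and more elementary than the paper's. You combine the fibre-dimension inequality $\dim_{(t,\varphi)}\Q^k_d(\A)\leqslant \dim_{[\varphi]}\pi^{-1}(t)+\dim_t T$, valid for morphisms between schemes of finite type over $\C$, with the identification of the Zariski tangent space of the fibre $\pi^{-1}(t)=\Q^k_d(\A_t)$ at $[\varphi]$ with $\Hom(S_F,F)$. The paper instead passes to an alteration $\tilde T\to T_{\red}$ with $\tilde T$ smooth irreducible and $\dim\tilde T=\dim T$ (via \cite{dJ-alt}), pulls back the Quot scheme along $\tilde T\to T$, and applies \cite[Proposition~2.2.7]{HL} to bound the tangent space of the \emph{relative} Quot scheme over the smooth base $\tilde T$ by $\hom(S_F,F)+\dim\tilde T$, then pushes the dimension estimate back down to $T$. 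Your argument avoids alterations entirely and uses only the deformation theory of the absolute Quot scheme on the fibre, so it is shorter and works directly over the possibly singular or non-reduced $T$. The paper's approach, while heavier, bounds the tangent space of the total space of the relative Quot scheme, which aligns with how \cite[Proposition~2.2.7]{HL} is used elsewhere in the paper (e.g.\ in the proof of Theorem~\ref{theorem irreducibility of relative quot}(4)--(5)).
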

	\begin{proof}
		Since $T$ is irreducible, we may apply \cite[Theorem 5.17, Chapter 1]{Kol96}. 
		This gives the second inequality. 
		
		Next we prove the first inequality. 
		Let $T_{\rm red}\subset T$ denote the reduced subscheme
		structure. Using \cite[Theorem 4.1]{dJ-alt} we may conclude the following.
        There is a surjective morphism $\tilde T \to T_{\red}$
        from an irreducible smooth variety $\tilde T$ such that $\dim({\tilde T})=\dim(T)$.
        Let $\tilde \A$ denote the pullback 
		of $\A$ to $C\times \tilde T$. 
		Using the base change property of 
		Quot schemes (for example, see \cite[Chapter 5, Example 5.1.5 (5)]{FGA}), we have the following Cartesian square
		\[
		\xymatrix{
			\Quot_{C \times \tilde T/\tilde T}(\tilde \A,k,d) \ar[r]\ar[d] & 
			\Quot_{C \times T/T}(\A,k,d)\ar[d]\\
			\tilde T\ar[r] & T
		}
		\]
		Clearly, the map $\Quot_{C \times \tilde T/\tilde T}(\tilde \A,k,d) \to
		\Quot_{C \times T/T}(\A,k,d)$
		is surjective on closed points. Let $\tilde q$ be a point in 
		$\Quot_{C \times \tilde T/\tilde T}(\tilde \A,k,d)$ and let $q$
		denote its image in $\Quot_{C \times T/T}(\A,k,d)$.
		Then it is clear that 
		$$\dim_{\tilde q}(\Quot_{C \times \tilde T/\tilde T}(\tilde \A,k,d))
		\geqslant \dim_{q}(\Quot_{C \times T/T}(\A,k,d))\,.$$
		Applying \cite[Proposition 2.2.7]{HL} to the point $\tilde q$ 
		we get that 
		\begin{align*}
			\hom(S_F,F)+\dim(\tilde T)\geqslant& \dim(T_{\tilde q}\Quot_{C \times \tilde T/\tilde T}(\tilde \A,k,d))  \\
			\geqslant& \dim_{\tilde q}(\Quot_{C \times \tilde T/\tilde T}(\tilde \A,k,d))
			\geqslant \dim_{q}(\Quot_{C \times T/T}(\A,k,d))\,.
		\end{align*}
		As $\dim({\tilde T})=\dim(T)$ the proof of the first inequality 
		is complete. 
	\end{proof}

	\section{Irreducibility of relative Quot scheme}
	Throughout this section, unless mentioned otherwise, 
	$T$ will be an irreducible scheme and $\A$ will be a 
	locally free sheaf of rank $r$ on $C\times T$. 
	The degree of each 
	$\A_t$ will be denoted $e$. 
	We may put additional assumptions on $T$ if required. 
	The proofs in this section are very similar to those in 
	\cite[Section 6]{PR03}. We only need to take care that the 
	degree $d$ can be chosen so that it works for all $t\in T$. 
	
	\begin{lemma}\label{lemma 6.1}
		There is a number $\alpha_1(\A,k)$, such that for 
		$d \geqslant \alpha_1(\A,k)$, for any stable bundle $F$ on $C$ 
		of rank $k$ and degree $d$ and for any $t \in T$, the 
		sheaf $\HOM(\A_t,F)$ is generated by global sections and
		$H^1(\A_t^\vee \otimes F) =0$.
	\end{lemma}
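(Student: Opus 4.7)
The plan is to reduce both assertions to a single uniform $H^1$-vanishing on the locally free sheaf $\HOM(\A_t,F) \cong \A_t^\vee \otimes F$. For global generation, the short exact sequences
\[ 0 \to \A_t^\vee \otimes F(-p) \to \A_t^\vee \otimes F \to (\A_t^\vee \otimes F)|_p \to 0 \]
for $p \in C$ reduce the assertion to the vanishing $H^1(\A_t^\vee \otimes F(-p)) = 0$ for all $p \in C$, while the second conclusion of the lemma is the corresponding vanishing with no twist. So it suffices to show that, for $d$ large enough, $H^1(\A_t^\vee \otimes F(-p)) = 0$ uniformly for every $t \in T$, every stable $F$ of rank $k$ and degree $d$, and every $p \in C$ (or no twist at all).

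Next I would apply Serre duality to rewrite
\[ H^1\bigl(\A_t^\vee \otimes F(-p)\bigr)^{\vee} \;\cong\; \Hom\bigl(F(-p),\, \A_t \otimes K_C\bigr), \]
converting the problem into a Hom-vanishing statement. Suppose for contradiction a nonzero $\vp : F(-p) \to \A_t \otimes K_C$ exists, with image of rank $k'$, where $1 \leqslant k' \leqslant k$. Stability of $F(-p)$ forces $\mu(\im \vp) \geqslant \mu(F(-p)) \geqslant d/k - 1$. On the other hand $\im \vp$ is a rank-$k'$ subsheaf of $\A_t \otimes K_C$; twisting by $K_C^{-1}$ produces a rank-$k'$ subsheaf of $\A_t$, whose degree is at most $m_s(\A,k')$ by Lemma \ref{degree bound}(2). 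Hence $\mu(\im \vp) \leqslant m_s(\A,k')/k' + (2g-2)$, a bound independent of $d$ and of $t$. Taking $\alpha_1(\A,k)$ to be any integer strictly larger than
\[ k\Bigl(1 + (2g-2) + \max_{1 \leqslant k' \leqslant k} m_s(\A,k')/k'\Bigr) \]
then forces the contradiction uniformly in all the parameters.

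The only real content beyond standard Serre duality and slope manipulations is the uniformity input from Lemma \ref{degree bound}: the bound $m_s(\A,k')$ depends only on $\A$ and $k'$, not on the point $t \in T$, which is precisely what makes a single constant $\alpha_1(\A,k)$ work over the entire family. I do not expect any serious obstacle here; the proof amounts to assembling Serre duality, the stability of $F(-p)$, and the family-wide degree bound of Lemma \ref{degree bound}.
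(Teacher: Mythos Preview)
Your argument is correct. You reduce both conclusions to the single vanishing $H^1(\A_t^\vee\otimes F(-p))=0$, apply Serre duality to rewrite this as $\Hom(F(-p),\A_t\otimes K_C)=0$, and then use stability of $F(-p)$ together with the uniform subsheaf bound $m_s(\A,k')$ from Lemma~\ref{degree bound} to rule out any nonzero morphism once $d$ exceeds an explicit constant depending only on $\A$ and $k$. All the slope inequalities are correct, and the key point---that $m_s(\A,k')$ is uniform in $t$---is exactly what is needed.

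The paper's route is different: it refers to \cite[Lemma~6.1]{PR03}, whose argument parametrizes the stable bundles $F$ by the moduli space $U^s_C(k,d)$ and uses semicontinuity of cohomology over this finite-type base; the relative version then replaces $U^s_C(k,d)$ by the relative moduli space $U^s_{C\times T/T}(k,d)$. Your approach bypasses moduli spaces entirely, relying instead on the elementary degree bound already established in Lemma~\ref{degree bound}. This is more self-contained and yields an explicit value for $\alpha_1(\A,k)$, at the cost of being specific to the stable case (the moduli-space argument could in principle be adapted to other bounded families). Either approach suffices for the purposes of this section.
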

	\begin{proof}
		The proof is identical to that in \cite[Lemma 6.1]{PR03}, except 
		that we replace the moduli spaces $U_C^s(k,j)$ with the relative moduli spaces
		$U^s_{C \times T/T}(k,j)$. 
	\end{proof}

	\begin{lemma}\label{lemma 6.2}
		Let $d \geqslant \alpha_1(\A,k)$. Fix
	 	$t \in T$ and a quotient $\vp : \A_t \to F$, 
	 	where $F$ is a stable bundle on $C$ of rank $k$ 
	 	and degree $d$. Let $S_F$ be the kernel of $\vp$. 
	 	Then $h^1(S_F^\vee \otimes F)=0$.
		As a consequence  
		$$h^0(S_F^\vee \otimes F) = dr-ke+k(r-k)(1-g)\,.$$
	\end{lemma}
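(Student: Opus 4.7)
The plan is a direct cohomological computation using the defining exact sequence of $S_F$ together with Lemma \ref{lemma 6.1}. Note first that $S_F \subset \A_t$ is a subsheaf of a vector bundle on the smooth curve $C$, so $S_F$ is itself locally free, of rank $r-k$ and degree $e-d$. In particular every tensor/dual operation below stays within the category of vector bundles, so there are no $\mathscr{T}or$ subtleties to worry about.

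First I would dualize the short exact sequence
$$0 \to S_F \to \A_t \to F \to 0$$
and tensor with $F$ to obtain
$$0 \to F^\vee \otimes F \to \A_t^\vee \otimes F \to S_F^\vee \otimes F \to 0.$$
The associated long exact sequence in cohomology contains the fragment
$$H^1(\A_t^\vee \otimes F) \longrightarrow H^1(S_F^\vee \otimes F) \longrightarrow H^2(F^\vee \otimes F),$$
and the rightmost term vanishes automatically since $C$ is a curve. Because $d \geqslant \alpha_1(\A,k)$ and $F$ is stable of rank $k$ and degree $d$, Lemma \ref{lemma 6.1} forces $H^1(\A_t^\vee \otimes F) = 0$. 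Squeezing between these two zeros gives $h^1(S_F^\vee \otimes F) = 0$, which is the first assertion.

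For the stated value of $h^0$ I would apply Riemann--Roch to $S_F^\vee \otimes F$. Its rank is $k(r-k)$, and its degree is
$$k\,\deg(S_F^\vee) + (r-k)\,\deg(F) = k(d-e) + (r-k)d = dr - ke.$$
Hence $\chi(S_F^\vee \otimes F) = dr - ke + k(r-k)(1-g)$, and combining with the vanishing of $h^1$ yields the claimed formula for $h^0$. There is no genuine obstacle: the content of the lemma is essentially packaged into the choice of $\alpha_1(\A,k)$ made in Lemma \ref{lemma 6.1}, and the remaining work is pure Riemann--Roch bookkeeping.
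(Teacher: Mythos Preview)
Your proof is correct and is precisely the standard argument: the paper itself does not spell it out but defers to \cite[Lemma 6.2]{PR03}, whose proof is exactly the dualize--tensor--long-exact-sequence computation you give, followed by Riemann--Roch. There is nothing to add.
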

	
	\begin{proof}
		The proof is identical to that in \cite[Lemma 6.2]{PR03}.
	\end{proof}
	
	For a closed point $t\in T$, consider the Quot scheme 
	$$\Q^k_{d}(\A_t)={\rm Quot}_{C/\C}(\A_t,k,d)\,.$$
	Inside $\Q^k_{d}(\A_t)$ we have the loci 
	$\Q^k_{d}(\A_t)^s$, consisting of quotients $\vp:\A_t\to F$
	such that $F$ is stable.
    By \cite[Proposition 2.3.1]{HL}, this locus is open in $\Q^k_{d}(\A_t)$.
    The closure of this locus will be 
	denoted $\overline{\Q^k_{d}(\A_t)^s}$. 
	\begin{proposition}\label{theorem 6.1}
		Let $d \geqslant \alpha_1(\A,k)$. Let $t\in T$ be a closed point.
		The Quot scheme $\Q^k_{d}(\A_t)$ has $\overline{\Q^k_{d}(\A_t)^s}$
		as an irreducible component of dimension $dr-ke+k(r-k)(1-g)$. 
	\end{proposition}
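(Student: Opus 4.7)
The plan is to exhibit $\Q^k_d(\A_t)^s$ as a non-empty, smooth, irreducible, open subvariety of $\Q^k_d(\A_t)$ of dimension $N := dr - ke + k(r-k)(1-g)$; its closure is then automatically an irreducible component of that dimension. Smoothness at a stable quotient $[\varphi: \A_t \twoheadrightarrow F]$ with kernel $S_F$ is immediate from standard Quot-scheme deformation theory: the tangent space is $\Hom(S_F, F)$ and the obstruction lies in $\textrm{Ext}^1(S_F, F) = H^1(S_F^\vee \otimes F)$, which vanishes by Lemma \ref{lemma 6.2}, while the same lemma gives $\hom(S_F, F) = N$.

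For non-emptiness, $U^s_C(k, d)$ is non-empty (standard when $g \geqslant 1$), so fix a stable bundle $F$ of rank $k$ and degree $d$; by Lemma \ref{lemma 6.1}, the sheaf $\HOM(\A_t, F)$ is globally generated. Since $r > k$, inside each fibre $\Hom((\A_t)_x, F_x)$ the locus of non-surjective linear maps has codimension $r - k + 1 \geqslant 2$, so a generic global section is fibrewise surjective at every point of the one-dimensional curve $C$, producing a surjection $\A_t \twoheadrightarrow F$ and hence a point of $\Q^k_d(\A_t)^s$.

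The main step is irreducibility of $\Q^k_d(\A_t)^s$, for which I would fibre it over $U^s_C(k, d)$ via $[\varphi] \mapsto [F]$. The target is irreducible of dimension $k^2(g-1)+1$, and the previous step shows the map is surjective. The fibre over $[F]$ identifies with the open subset of $\Hom(\A_t, F)/\textrm{Aut}(F) = \P(\Hom(\A_t, F))$ consisting of surjections, and by Lemma \ref{lemma 6.1} one has $H^1(\A_t^\vee \otimes F) = 0$, so Riemann--Roch pins down $\hom(\A_t, F) = dr - ke + rk(1-g)$ independently of $F$. Every fibre is thus an irreducible open subset of a projective space of the same dimension, giving irreducibility of $\Q^k_d(\A_t)^s$, and a dimension count of base plus fibre recovers exactly $N$. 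The principal technical obstacle is that $U^s_C(k, d)$ is only a coarse moduli space, so the classifying map must be interpreted either stackily or \'etale-locally where a Poincar\'e family exists; this is the standard device used already in the absolute setting of \cite{PR03}, and with it in place the closure of $\Q^k_d(\A_t)^s$ yields the claimed irreducible component.
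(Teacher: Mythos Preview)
Your proposal is correct and follows essentially the same approach as the paper: both defer to the construction in \cite[Theorem~6.1]{PR03}, exhibiting $\Q^k_d(\A_t)^s$ as parametrized by a projective bundle over the moduli space of stable bundles, and both invoke Lemma~\ref{lemma 6.2} together with standard Quot-scheme deformation theory for the dimension count. The only cosmetic difference is that you frame irreducibility via the fibration $\Q^k_d(\A_t)^s \to U^s_C(k,d)$ with equidimensional irreducible fibres (which, combined with the smoothness you established, does yield irreducibility after a short argument), whereas the paper phrases it as a surjection from the irreducible projective bundle $Y$ onto $\Q^k_d(\A_t)^s$; these are dual viewpoints on the same construction, and your explicit non-emptiness argument via global generation is a nice addition the paper leaves implicit.
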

	\begin{proof}
		The number $\alpha_1(\A,k)$ in Lemma \ref{lemma 6.1} and 
		Lemma \ref{lemma 6.2} works for all $t\in T$. Thus, 
		following the same reasoning as in the proof of 
		Theorem 6.1 of \cite{PR03}, 
		we can construct an irreducible space $Y$ and a 
		map $Y\to \Q^k_{d}(\A_t)$,
		such that the image of $Y$ is precisely $\Q^k_{d}(\A_t)^s$.
         The space $Y$
        	is a projective bundle associated to a vector bundle over a certain 
        	moduli space of stable bundles over $C$. Roughly, it parametrizes 
        	pairs of the form $(F,q:\A_t \to F)$  such that $F$ is a stable 
        	vector bundle over $C$ and $q$ is a quotient.
		This shows that $\Q^k_{d}(\A_t)^s$ is an irreducible open 
		subset of $\Q^k_{d}(\A_t)$. Thus, its closure is 
		also irreducible. Using Lemma \ref{lemma 6.2} and 
		\cite[Proposition 2.2.8]{HL}, it follows that the dimension
		of $\Q^k_{d}(\A_t)^s$ is 
		$h^0(S_F^\vee \otimes F)=\chi(S_F^\vee\otimes F)=dr-ke+k(r-k)(1-g)$. 
	\end{proof}

	{In the following Lemma and in the rest of the article we will use 
	the following convention. For an irreducible scheme $W$, we shall
	say that ``property P holds for a general point of $W$", if there is a 
	non-empty Zariski open subset $W'$ of $W$ such that P holds for every closed 
	point of $W'$. }

	\begin{lemma}\label{lemma 6.3}

        Given integers $d_0$ and $k_0$ with $0<k_0 < k$, there is a number $\alpha_2(\A,k,k_0,d_0)$
		such that, if $d \geqslant \alpha_2(\A,k,k_0,d_0)$ then the following holds.
		Let $t'\in T$ be a closed point and $W$ be an irreducible component of $\Q^k_{d}(\A_{t'})$.
		Then for a general point $[\vp: \A_{t'} \to F] \in W$, 
        $F$ has no locally free quotient of degree 
		$d_0$ and rank $k_0$.
	\end{lemma}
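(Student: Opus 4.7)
The plan is to let $Z \subseteq W$ denote the constructible locus of those $[\vp : \A_{t'} \to F] \in W$ for which $F$ admits a locally free quotient of rank $k_0$ and degree $d_0$, and to show that $\dim Z < \dim W$ for $d$ sufficiently large. Since $W$ is irreducible, this will force $\overline Z \subsetneq W$, so the complement of $\overline Z$ in $W$ will be the desired nonempty open subset. By the standard tangent-obstruction bound for Quot schemes (as used in the proof of Lemma \ref{lemma dimension bound of relative quot}), $\dim W$ is bounded below by the expected dimension $dr - ke + k(r - k)(1 - g)$.

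To bound $\dim Z$ from above, I would parametrize it through the incidence variety
$$I \;:=\; \bigl\{(\vp : \A_{t'} \to F,\ \psi : F \to F_0)\ :\ F_0 \text{ locally free of rank } k_0 \text{ and degree } d_0\bigr\}.$$
Forgetting $\psi$ gives a surjection $I \to Z$, so $\dim Z \leqslant \dim I$. Sending $(\vp,\psi)$ to the composite $\psi \circ \vp : \A_{t'} \to F_0$ defines a second projection $I \to U$, where $U \subseteq \Q^{k_0}_{d_0}(\A_{t'})$ is the open locus of locally free quotients. With $K_0 := \ker(\A_{t'} \to F_0)$ and $K := \ker \vp \subseteq K_0$, a point of $I$ above $[F_0] \in U$ is equivalent to specifying the quotient $K_0 \twoheadrightarrow K_0/K$, of rank $k - k_0$ and degree $d - d_0$. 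Hence the fiber of $I \to U$ over $[F_0]$ is canonically $\Q^{k - k_0}_{d - d_0}(K_0)$, giving
$$\dim Z \ \leqslant\ \dim U + \max_{[F_0] \in U} \dim \Q^{k - k_0}_{d - d_0}(K_0).$$

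Both summands will be bounded uniformly in $d$. Boundedness of $\{\A_t\}_{t \in T}$ implies the families $\{F_0\}$ and $\{K_0\}$ are bounded, so the tangent estimate $\dim_{[F_0]} U \leqslant h^0(K_0^\vee \otimes F_0)$ gives $\dim U \leqslant C_1(\A, k_0, d_0)$ independently of $d$. For the fibers, combining the tangent bound $\dim_{[F_1]} \Q^{k - k_0}_{d - d_0}(K_0) \leqslant h^0((K')^\vee \otimes F_1) = \chi + h^1$ (with $\chi$ the expected dimension) with Lemma \ref{lemma 6.1} applied to the bounded family $\{K_0\}$ yields $h^1 = 0$ at the stable points and, more generally, a uniform upper bound $h^1 \leqslant C_2(\A, k, k_0, d_0)$ independent of $d$ via boundedness and semicontinuity. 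A direct calculation then gives
$$\dim W - \chi \;=\; k_0 \bigl[d - e + (r - k)(1 - g)\bigr] + d_0 (r - k),$$
which grows linearly in $d$ with positive coefficient $k_0 > 0$. Taking $\alpha_2(\A, k, k_0, d_0)$ large enough to absorb $C_1 + C_2$ then forces $\dim Z < \dim W$. The main obstacle is the uniform control of $h^1$ as $[F_0]$ ranges over $U$ and $F_1$ over $\Q^{k - k_0}_{d - d_0}(K_0)$, particularly on the non-stable locus; this requires invoking the boundedness results of Section 2 and Lemma \ref{lemma 6.1} in a family, rather than one bundle at a time.
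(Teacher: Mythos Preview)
Your overall architecture matches the paper's: parametrize the bad locus by an incidence space fibered over the locally free locus of $\Q^{k_0}_{d_0}$, identify the fiber over $[F_0]$ with $\Q^{k-k_0}_{d-d_0}(K_0)$, and compare the resulting upper bound (growing like $(r-k_0)d$) against the expected lower bound $\dim W \geqslant dr - ke + k(r-k)(1-g)$. The paper does exactly this, constructing the incidence space as the relative Quot scheme $D = \Quot_{C\times J/J}(\S_0,k-k_0,d-d_0)$ over $J = \Q^{k_0}_{d_0}(\A)^0$ and mapping it to $\Q^k_d(\A)$.

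The difference, and the genuine gap in your writeup, is in bounding the fiber dimension. The paper invokes \cite[Theorem 4.1]{PR03}, which gives directly
\[
\dim \Q^{k-k_0}_{d-d_0}(K_0) \leqslant (k-k_0)(r-k) + (r-k_0)d - (d_0 + m(K_0,k-k_0))(r-k_0),
\]
and the only input needed is a uniform lower bound on $m(K_0,k-k_0)$, which follows immediately from boundedness of $\{K_0\}$ via Remark~\ref{define m_min and m_max}. Your route via the tangent bound $\chi + h^1$ requires instead a uniform bound on $h^1 = \ext^1(K',F_1)$, and ``boundedness and semicontinuity'' does not supply this: semicontinuity operates on a fixed Quot scheme, but here $d$ varies and the point $[F_1]$ need not be stable, so Lemma~\ref{lemma 6.1} does not apply. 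The claim is nonetheless true, but for a reason you did not give: by Serre duality $h^1 = \hom(F_1, K'\otimes\omega_C)$, and since $K_0 \twoheadrightarrow F_1$ and $K' \hookrightarrow K_0$, one has injections
\[
\Hom(F_1, K'\otimes\omega_C) \hookrightarrow \Hom(K_0, K'\otimes\omega_C) \hookrightarrow \Hom(K_0, K_0\otimes\omega_C),
\]
so $h^1 \leqslant h^0(K_0^\vee\otimes K_0\otimes\omega_C)$, which is bounded as $K_0$ ranges over a bounded family. With this fix your argument goes through, but the paper's use of \cite[Theorem~4.1]{PR03} is cleaner and avoids the issue entirely. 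You should also construct $I$ as a scheme (a relative Quot scheme, as the paper does with $D$) rather than as a set, so that the dimension inequality $\dim I \leqslant \dim U + \max(\text{fiber})$ is rigorous.
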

    
	\begin{proof}
		First, let us define $\alpha_2(\A,k,k_0,d_0)$.
		Let $J$ denote the locus of locally free quotients in 
		$\Q^{k_0}_{d_0}(\A)$. 
		There is a universal quotient on $C \times J$ 
		$$ 0 \to \S_0 \to \pi^*\A \to \F_0 \to 0\,,$$
		where $\pi: (C \times T)\times_T J \to C \times T$ is the projection map.
		Using Remark \ref{define m_min and m_max}, we get numbers 
		$m_{\min}(\S_0,k-k_0)$ and $m_{\max}(\S_0,k-k_0)$.
		Let 
		\begin{equation}\label{def M}
			M := \dim J +(k-k_0)(r-k)-(d_0+m_{\min}(\S_0,k-k_0))(r-k_0)\,.
		\end{equation}
		Let $\lambda(\A,k,k_0,d_0)$ be the smallest positive integer 
		such that for all {$d\geqslant \lambda(\A,k,k_0,d_0)$}, we have 
		\begin{equation}\label{eq-def-lambda3}
			d(r-k_0) + M < dr-ke+k(r-k)(1-g)\,.
		\end{equation}
		Define 
		$$ \alpha_2(\A,k,k_0,d_0) := \max\{\lambda(\A,k,k_0,d_0), m_{\max}(\S_0,k-k_0) + d_0\}\,.$$
		
		Assume $d \geqslant \alpha_2(\A,k,k_0,d_0)$.
		Fix $t' \in T$ and let $W$ be an irreducible component of $\Q^k_{d}(\A_{t'})$.
		Let $B$ be the following subset
		$$B = \{[\vp:\A_{t'} \to F] \in W : \textrm{there exists  a locally free quotient } F \to F_0 \textrm{ of rank }k_0 \textrm{ and degree } d_0 \}\,.$$
		
		Let $D$ denote the relative Quot scheme
		$\Quot_{C \times J/J}(\S_0,k-k_0,d-d_0)$.
		A closed point of $J$ corresponds to a pair 
		$(t,[q_0: \A_t \to F_0])$ where 
		\begin{itemize}
			\item $t \in T$, and
			\item $[q_0]$ is a locally free quotient of rank 
			$k_0$ and degree $d_0$.
		\end{itemize}
		A closed point of $D$ corresponds to a triple 
		$(t,[q_0: \A_t \to F_0], [\varphi: S_0 \to H])$ where 
		\begin{itemize}
			\item $(t,[q_0]) \in J$, 
			\item $S_0$ is the kernel of the map $q_0$,
			\item $\varphi$ is a quotient of rank $k-k_0$ 
			and degree $d-d_0$.
		\end{itemize}
		Let $\tilde \pi: (C \times J)\times_J D \to C \times J$ 
		be the projection.
		Using the natural isomorphism $(C \times J)\times_J D \cong C \times D$, we
		have the following universal exact sequence on $C \times D$,
		$$ 0 \to \K \to \tilde\pi^*S_0 \to \mathcal H \to 0\,.$$ 
		As $\F_0$ is the universal quotient, it is flat over $J$. 
		So we have $\tilde \pi^*\F_0$ is flat over $D$.
		We get the following exact sequence on $C\times D$
		$$0 \to \tilde \pi^*S_0 \to \tilde \pi^*\pi^*\A \to \tilde \pi^*\F_0 \to 0\,.$$
		Let $\G$ be the cokernel of the inclusion $\K \hookrightarrow \tilde \pi^*\pi^*\A$.
		The quotient $\tilde \pi^*\pi^*\A \to \G$ defines a morphism of schemes
		$$ f : D \to \Q^k_{d}(\A)\,,$$
        which has the following description.
		Given a closed point $(t,[q_0: \A_t \to F_0],[\varphi: S_0 \to H])$ in $D$,
		we can construct a quotient $\A_t \to G$ using the following pushout diagram
		(note that taking pushout preserves cokernels)
		\begin{equation}\label{pushout relative}
			\begin{tikzcd}
				0 \arrow{r} & S_0 \arrow{d}{\varphi} \arrow{r} &
				\A_t \arrow{d}{} \arrow{r}{q_0} & 
				F_0  \ar[d,-,double equal sign distance,double] \arrow{r} & 0 \phantom{\,.} \\
				0 \arrow{r} & H \ar[r,hookrightarrow] &
				G  \arrow{r} & F_0  \arrow{r} & 0 \,.\\
			\end{tikzcd}
		\end{equation}
		Clearly, $G$ is of rank $k$ and degree $d$.
		The map $f$ sends the point $(t,[q_0],[\varphi])$ to the point 
		$(t,[\A_t \to G])$ in the relative Quot scheme $\Q^k_{d}(\A)$.
        Let $[\psi:\A_{t'} \to F]$ be a closed point in $B$, 
        with a locally free quotient $\psi_0:F \to F_0$ of rank $k_0$ and degree $d_0$.
        Let $q': \A_{t'} \to F_0$ denote the composition of $\psi$ and $\psi_0$.
        Then there exists a map $\vp: \ker(q') \to \ker(\psi_0)$ as in the following diagram with two short exact sequences
        \begin{equation*}
			\begin{tikzcd}
				0 \arrow{r} & \ker(q') \arrow{d}{\vp} \arrow{r} &
				\A_{t'} \arrow{d}{\psi} \arrow{r}{q'} & 
				F_0  \ar[d,-,double equal sign distance,double] \arrow{r} & 0 \phantom{\,.} \\
				0 \arrow{r} & \ker(\psi_0) \ar[r,hookrightarrow] &
				F  \arrow{r}{\psi_0} & F_0  \arrow{r} & 0 \,.\\
			\end{tikzcd}
		\end{equation*}
        Clearly the triple $(t',[q'],[\vp])$ is a closed point in $D$
        and from the pointwise description of $f$, it follows that 
        $f$ sends the point $(t',[q'],[\vp]))$ to the point  
        $(t', [\psi])$.
		This shows that $B\subset f(D)$.
		So $\dim B \leqslant \dim D$.
        Let $\sigma: D \to J$ denote the structure map.
        Then dimension of $D$ is bounded above by the sum of dimension of $J$ and the maximum dimension of the fibers of $\sigma$.
        Let $(t,[q_0])$ denote a closed point in $J$ and 
        $S_0$ denote the kernel of $q_0$.
        The fiber of $\sigma$ over $(t,[q_0])$ is the Quot scheme
        $\Quot_{C/\C}(S_0,k-k_0,d-d_0)$.
        So we have
        $$\dim B \leqslant \dim D \leqslant \dim J 
		+ \max_{(t,[q_0]) \in J} \{\dim \Quot_{C/\C}(S_0,k-k_0,d-d_0)\}\,.$$

		Using \cite[Theorem 4.1]{PR03} we have, for any $d-d_0 \geqslant m(S_0,k-k_0)$,
		$$\dim \Quot_{C/\C}(S_0,k-k_0,d-d_0) \leqslant 
		(k-k_0)(r-k)+(r-k_0)d-(d_0+m(S_0,k-k_0))(r-k_0)\,.$$
		Recall the definitions of $m(S_0,k-k_0)$, $m_{\min}(\S_0,k-k_0)$ and 
		$m_{\max}(\S_0,k-k_0)$ from Definition \ref{def m(G,k)} and Remark \ref{define m_min and m_max}. 
		It follows that for all $(t,[q_0]) \in J$ we have 
		$$m_{\min}(\S_0,k-k_0) \leqslant m(S_0,k-k_0) \leqslant 
		m_{\max}(\S_0,k-k_0)\,.$$ 
		Using this, we see that for any $d \geqslant m_{\max}(\S_0,k-k_0)+d_0$,
		\begin{align*}
			\dim B \leqslant \dim D & \leqslant \dim J + (k-k_0)(r-k)+(r-k_0)d-(d_0+m_{\min}(\S_0,k-k_0))(r-k_0) \\
			& = d(r-k_0) + M\,,
		\end{align*}
		where $M$ was defined in \eqref{def M}.
		
		By \cite[Proposition 2.2.8]{HL} the dimension of $W$ is bounded below by the 
		quantity $dr-ke+k(r-k)(1-g)$. 
		{As $d \geqslant \alpha_2(\A,k,k_0,d_0) \geqslant \lambda(\A,k,k_0,d_0)$}, it follows 
		from \eqref{eq-def-lambda3} that $\dim B < \dim W$. 
		{Let $W'$ be the Zariski open subset which is the 
		complement of the closure of $B$.
		For $[\vp: \A_{t'} \to F]\in W'$,
		there is no locally free quotient $F \to F_0$ of 
		degree $d_0$ and rank $k_0$.
		This proves the Lemma.}
	\end{proof}

\begin{remark}\label{remark dimension bound locus having quotients}
	The above proof also shows that given a pair $(d_0,k_0)$,
	{with $0<k_0<k$},
	there are numbers $\alpha_2(\A,k,k_0,d_0)$ and $M(d_0,k_0)$, 
	such that for all $d\geqslant \alpha_2(\A,k,k_0,d_0)$, 
	the locus of points $(t,[\vp:\A_t\to F])\in \Q^k_{d}(\A)$ for which $F$
	has a locally free quotient of rank $k_0$ and degree $d_0$, has dimension $\leqslant d(r-k_0)+M(k_0,d_0)$. We shall use this observation later. 
\end{remark}

	\begin{lemma}\label{H^1=0}
        {There is a number $\alpha_3(\A,k)$
		such that if $d \geqslant \alpha_3(\A,k)$ then we have the following.
		Let $t\in T$ be a closed point and $W$ be an irreducible component of $\Q^k_{d}(\A_{t})$.
		Assume that there is a point $[\vp: \A_{t} \to F'']\in W$ such that $F''$ 
		is locally free. 
		Then a general point $[\vp: \A_{t} \to F]$ in $W$
		satisfies $H^1(\A_t^\vee \otimes F) =0$.}
	\end{lemma}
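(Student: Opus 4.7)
My plan is to reduce the vanishing of $H^1$ to the non-existence of certain locally free quotients of $F$, and then invoke Lemma \ref{lemma 6.3}. Since $H^1(\A_t^\vee \otimes F) = 0$ is an open condition on the flat family over $W$, it is enough to show that the bad locus
$$B := \{[\vp: \A_t \to F] \in W : H^1(\A_t^\vee \otimes F) \neq 0\}$$
is not dense. The assumption provides a point $[\vp: \A_t \to F''] \in W$ with $F''$ locally free, and local freeness is open in flat families, so the locally free locus $W^{\mathrm{lf}} \subseteq W$ is open and dense, and I need only show $B \cap W^{\mathrm{lf}}$ is not dense in $W$.

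For $F$ locally free, Serre duality gives $H^1(\A_t^\vee \otimes F)^\vee \cong \Hom(F, \A_t \otimes K_C)$, so a point of $B \cap W^{\mathrm{lf}}$ yields a nonzero homomorphism $\phi: F \to \A_t \otimes K_C$. The image $F_0$ of $\phi$ is simultaneously a quotient of $F$ and a subsheaf of $\A_t \otimes K_C$; as a subsheaf of a locally free sheaf on the smooth curve $C$, $F_0$ is locally free of some rank $k_0$ with $1 \leqslant k_0 \leqslant k$. Applying Lemma \ref{degree bound}(1) to $\A$, using that $F_0$ is a quotient of $\A_t$, gives $\deg F_0 \geqslant m_q(\A, k_0)$. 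Applying Lemma \ref{degree bound}(2) to the family $\A \otimes p_C^* K_C$ on $C \times T$, using that $F_0$ is a subsheaf of $\A_t \otimes K_C$, gives $\deg F_0 \leqslant m_s(\A \otimes p_C^* K_C, k_0)$. Hence only finitely many pairs $(k_0, d_0 = \deg F_0)$ with $1 \leqslant k_0 \leqslant k$ can occur.

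The case $k_0 = k$ is handled directly: a surjection between locally free sheaves of the same rank on a smooth curve is an isomorphism, so $F$ itself embeds in $\A_t \otimes K_C$, forcing $d \leqslant m_s(\A \otimes p_C^* K_C, k)$, which is ruled out by requiring $\alpha_3(\A, k) > m_s(\A \otimes p_C^* K_C, k)$. For the finitely many pairs $(k_0, d_0)$ with $0 < k_0 < k$, Lemma \ref{lemma 6.3} provides a threshold $\alpha_2(\A, k, k_0, d_0)$ past which a general quotient $[\vp: \A_{t} \to F]$ in any irreducible component of $\Q^k_d(\A_t)$ admits no locally free quotient of rank $k_0$ and degree $d_0$; in particular, the corresponding subset of $W^{\mathrm{lf}}$ is not dense in $W$. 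Setting $\alpha_3(\A, k)$ to be the maximum of $m_s(\A \otimes p_C^* K_C, k) + 1$ and these finitely many $\alpha_2(\A, k, k_0, d_0)$, we conclude that $B \cap W^{\mathrm{lf}}$ is a finite union of non-dense subsets of $W$, hence not dense.

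The main technical point is ensuring that both the range of admissible pairs $(k_0, d_0)$ and the thresholds $\alpha_2$ are uniform in $t \in T$ and in the component $W$. Both uniformities are built in: the degree bounds come from Lemma \ref{degree bound} applied to the bounded families $\A$ and $\A \otimes p_C^* K_C$, and Lemma \ref{lemma 6.3} is already stated for an arbitrary closed point $t' \in T$ and an arbitrary irreducible component of $\Q^k_d(\A_{t'})$.
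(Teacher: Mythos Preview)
Your proof is correct and follows essentially the same approach as the paper's: reduce $H^1(\A_t^\vee\otimes F)\neq 0$ via Serre duality to the existence of a nonzero map $F\to \A_t\otimes\omega_C$, bound the rank and degree of its image using Lemma~\ref{degree bound} applied to the families $\A$ and $\A\otimes p_C^*\omega_C$, handle the full-rank case by a direct degree bound, and invoke Lemma~\ref{lemma 6.3} for the finitely many remaining pairs $(k_0,d_0)$. The only cosmetic differences are that the paper bounds the $k_0=k$ case via $m_s(\A,k)+k(2g-2)$ rather than $m_s(\A\otimes p_C^*K_C,k)$ (equivalent up to twisting), and the paper additionally folds $\alpha_1(\A,k)$ into $\alpha_3(\A,k)$ for later convenience, though it is not used in the proof of this lemma itself.
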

	\begin{proof}
		Let us first define $\alpha_3(\A,k)$.
		Let $\omega_C$ denote the canonical bundle on $C$
        and $p_C: C \times T \to C$ denote the natural projection.
		Let $k'$ be an integer such that $0<k'<k$. 
        Using Lemma \ref{degree bound}, we get numbers 
        $m_q(\A,k')$ and 
        $m_s(\A \otimes p_C^*\omega_C,k')$ which satisfy the following.
        If $F'$ is a sheaf on $C$ of rank $k'$ and degree $d'$ 
		such that, for some closed point $t\in T$, 
		$F'$ is a quotient of $\A_t$ and is also subsheaf of 
		$\A_t \otimes \omega_C$, 
        then $m_q(\A,k') \leqslant d' \leqslant m_s(\A \otimes p_C^*\omega_C,k')$.
        
        Let $\ms C$ denote the collection of pairs of integers 
        $(k',d')$ where $0<k'<k$ and $m_q(\A,k') \leqslant d' \leqslant m_s(\A \otimes p_C^*\omega_C,k')$.
        Clearly, $\ms C$ is a finite set. 
        Also recall the number $m_s(\A,k)$ defined 
        in Lemma \ref{degree bound}.
		Define 
		$$\alpha_3(\A,k) := \max
		\Big{\{}
		\max_{(k',d')\in \ms C} \{ \alpha_2(\A,k,k',d')\}, \alpha_1(\A,k), m_s(\A,k)+k(2g-2)+1
		\Big{\}}\,.$$

		Now we assume $d \geqslant \alpha_3(\A,k)$. Fix a closed point $t\in T$
		and let $W$ be an irreducible component of $\Q^k_{d}(\A_t)$ containing a locally free quotient.
			For each $(k',d')\in \ms C$, by applying Lemma \ref{lemma 6.3},
			we get a non-empty Zariski open subset of $W$,
			whose points $[\vp: \A_t \to F]$ are
			such that there is no 
			locally free quotient of $F$ of rank $k'$ and degree $d'$.
        Also note that the 
		points $[\vp: \A_{t} \to F']\in W$ for which $F'$ is locally free is 
		a Zariski open subset of $W$ (e.g. using \cite[Lemma 2.1.8]{HL}).
		Taking the intersection
		of these, we get a non-empty Zariski open subset $W'$
		such that for a closed point $[\vp: \A_t \to F]\in W'$, there is no 
		locally free quotient of $F$ of rank $k'$ and degree $d'$, for $(k',d')\in \ms C$,
		and such that $F$ is locally free.
		
		{
		We claim that $H^1(\A_t^\vee \otimes F) =0$ for $[\vp: \A_t \to F]\in W'$. 
		Using Serre duality, it is
		enough to prove that $\Hom(F,\A_t \otimes \omega_C) = 0$.
		Let us assume there is a non-zero homomorphism 
		$\psi:F \to \A_t \otimes \omega_C$.
		Let $F_0$ be the image of $\psi$.
		Then $F_0$ is a subsheaf of $\A_t \otimes \omega_C$.
        As $\A_t \otimes \omega_C$ is torsion-free, 
        $F_0$ is a torsion-free sheaf over a smooth curve $C$ and hence locally free.
		As $F_0$ is a quotient of $F$,
		it is also a quotient of $\A_t$.
		We now apply the discussion in the preceding paragraph 
		to $F_0$.
        Let $\rk(F_0)=k_0$ and ${\rm deg}(F_0)=d_0$.
        If $k_0=k$, then it follows that $F\stackrel{\sim}{\to} F_0\subset \A_t\otimes \omega_C$.
        From Lemma \ref{degree bound}, it follows that 
        $d= {\rm deg}(F)\leqslant m_s(\A,k)+k(2g-2)$, which is a contradiction. 
        Thus, we assume that $0<k_0<k$. Applying the discussion in the 
        first paragraph of the proof to the sheaf $F_0$, we see that 
        $(k_0,d_0)\in \ms C$. As $[\vp]\in W'$, this contradicts the conclusion
        of the preceding paragraph, that $F$ does not have a locally free quotient 
        of rank $k_0$ and degree $d_0$. 
		So we conclude that $H^1(\A_t \otimes F) =0$.}
	\end{proof}

	\begin{theorem}\label{theorem 6.3}
		Let $d \geqslant \alpha_3(\A,k)$. For any $t\in T$
		there is a unique component of 
		$\Q^k_{d}(\A_t)$ whose general point corresponds 
		to a locally free quotient.
		This component is precisely $\overline{\Q^k_{d}(\A_t)^s}$, 
		which appears in Proposition \ref{theorem 6.1}.
	\end{theorem}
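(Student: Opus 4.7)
Plan for proof of Theorem \ref{theorem 6.3}. Existence is immediate from Proposition \ref{theorem 6.1}: the component $\overline{\Q^k_{d}(\A_t)^s}$ has dimension $dr - ke + k(r-k)(1-g)$, and its general point corresponds to a stable (hence locally free) quotient. For uniqueness, let $W$ be any irreducible component of $\Q^k_{d}(\A_t)$ whose general point corresponds to a locally free quotient. The goal is to show $W = \overline{\Q^k_{d}(\A_t)^s}$. By Lemma \ref{H^1=0}, at a general point $[\vp: \A_t \to F] \in W$ we have $F$ locally free and $H^1(\A_t^\vee \otimes F) = 0$.

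The first main step is a deformation-theoretic computation at such a point. Applying $\Hom(-, F)$ to $0 \to S_F \to \A_t \to F \to 0$, and using $\textrm{Ext}^1(\A_t, F) \cong H^1(\A_t^\vee \otimes F) = 0$ (since $\A_t$ is locally free) together with $\textrm{Ext}^2(F, F) = 0$ (cohomological dimension of $C$ is $1$), one extracts two consequences from the long exact sequence: $\textrm{Ext}^1(S_F, F) = 0$, and the natural map $\Hom(S_F, F) \to \textrm{Ext}^1(F, F)$ is surjective. The first implies $[\vp]$ is a smooth point of $\Q^k_{d}(\A_t)$, with $\dim W = \hom(S_F, F) = \chi(S_F^\vee \otimes F) = dr - ke + k(r-k)(1-g)$, matching the dimension of $\overline{\Q^k_{d}(\A_t)^s}$. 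The second, combined with the vanishing of both obstruction spaces above, implies that the forgetful morphism of deformation functors from deformations of the quotient $[\vp]$ in $\Q^k_{d}(\A_t)$ to deformations of $F$ as a coherent sheaf on $C$ is formally smooth at $[\vp]$.

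To conclude, I would produce a stable quotient inside $W$. For $d \geqslant \alpha_3(\A, k)$, the locally free sheaf $F$ admits a deformation to a stable bundle; this uses the classical fact that for $d$ large the moduli stack of rank-$k$, degree-$d$ vector bundles on $C$ is irreducible with dense open stable locus, so the Kuranishi family of $F$ meets the stable locus. Lifting such a deformation via the smooth forgetful functor produces a quotient $[\vp']$ in an \'etale neighborhood of $[\vp]$ with $F' := F_{\vp'}$ stable. Since $[\vp]$ is a smooth point it lies in the unique component $W$, and so does its \'etale neighborhood; hence $[\vp'] \in W \cap \Q^k_{d}(\A_t)^s$. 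As $[\vp']$ is itself a smooth point of $\Q^k_{d}(\A_t)$ (Lemma \ref{lemma 6.2}), it lies in a unique irreducible component, forcing $W = \overline{\Q^k_{d}(\A_t)^s}$.

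The main obstacle is the classical deformation claim that $F$ admits a nearby stable deformation; ensuring this for $d \geqslant \alpha_3(\A, k)$ may require slightly enlarging $\alpha_3$ to absorb the bound arising from this standard fact. An alternative strategy avoiding the moduli-stack input would be to bound directly, via Remark \ref{remark dimension bound locus having quotients}, the dimension of the locus in $W$ where $F$ fails to be stable (i.e.\ where $F$ admits a locally free quotient of rank $k_0 < k$ and degree $d_0 \leqslant k_0 d/k$); however, this requires controlling the constants $M(k_0, d_0)$ uniformly as $d_0$ grows linearly with $d$, which is more delicate than the deformation route.
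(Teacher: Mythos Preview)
Your proposal is correct and follows essentially the same approach as the paper: after invoking Lemma \ref{H^1=0}, the paper defers to \cite[Theorem 6.2]{PR03}, whose proof is precisely the construction you outline---use $H^1(\A_t^\vee\otimes F)=0$ to see that the forgetful map from quotients to bundles is smooth near $[\vp]$, then deform $F$ to a stable bundle inside the irreducible moduli of rank-$k$ degree-$d$ bundles and lift. Your worry about enlarging $\alpha_3$ is unnecessary: the irreducibility of the moduli stack of vector bundles on $C$ (and density of the stable locus for $g\geqslant 2$) holds for all $d$, not just large $d$, so no extra bound is needed; the explicit family construction in \cite{PR03} makes this concrete by trivializing $R^0p_*(\A_t^\vee\otimes\mc F)$ over a curve in the moduli space passing through $[F]$.
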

	\begin{proof}
		As any stable sheaf on $C$ is locally free, the existence of such a component is already proved by Proposition \ref{theorem 6.1}.
		We prove the uniqueness.
		Let $W$ be any irreducible component of $\Q^k_{d}(\A_t)$ 
		whose general point corresponds to a locally free quotient.
		As $d\geqslant \alpha_3(\A,k)$, we may apply Lemma \ref{H^1=0}
		to the component $W$. 
		By Lemma \ref{H^1=0}, we have $H^1(\A_t \otimes F)=0$ for 
		$[\vp:\A_t \to F]$ in a Zariski open subset of $W$.
		Proceeding as in 
		\cite[Theorem 6.2]{PR03}, we can construct an irreducible 
		family of quotients of $\A_t$ in $W$ such that the quotient $\vp$ 
		appears in the family and the general quotient is stable.
        This proves that $W$ is precisely the component $\overline{\Q^k_{d}(\A_t)^s}$, 
		which appears in Proposition \ref{theorem 6.1} and in particular $W$ is unique.
	\end{proof}

	Denote by $\Q^k_{d}(\A)^0$ the set of points 
	$(t,[\vp:\A_t \to F])\in \Q^k_{d}(\A)$
	for which $F$ is locally free.
	Fix a closed point $t \in T$. 
    Similarly, by $\Q^k_{d}(\A_t)^0$, denote the set of points 
	$[\vp:\A_t \to F]\in \Q^k_{d}(\A_t)$
	for which $F$ is locally free.
	Let $Z_\delta$ be the subset of $\Q^k_{d}(\A_t)$ which contains 
	points $[\vp:\A_t \to F]$ such that $F$ has torsion of length $\delta$.
    As $C$ is a smooth curve, 
    the Quot scheme $\Q^k_{d}(\A_t)$ is the disjoint union of the loci $\Q^k_{d}(\A_t)^0$ and $\sqcup_{\delta \geqslant 1} Z_{\delta}$.
	In view of Theorem \ref{theorem 6.3}, to prove irreducibility of 
	the Quot scheme $\Q^k_{d}(\A_t)$, it is enough to show  that for 
	any $\delta \geqslant 1$, the points of $Z_\delta$ cannot be 
	general in any component of the Quot scheme.
    It is clear that  
	$\Q^k_{d}(\A)^0\neq \emptyset$ {if and only if} 
	there is some $t$ for which $\Q^k_{d}(\A_t)^0\neq \emptyset$.
	Let $S'$ denote the set of integers $d$ for which 
	$\Q^k_{d}(\A)^0\neq \emptyset$. 
    Note that $S'$ depends on $k$.
    A necessary condition for 
	$d$ to be in $S'$ is that $d\geqslant m_{\min}(\A,k)$, see 
	Remark \ref{define m_min and m_max}.
	
	\begin{proposition}\label{theoerm Zdelta}
		There is a number $\alpha_4(\A,k)$ such that the 
		following holds. Let $t'\in T$ be a closed point.
		Consider the subset $Z_\delta$ in $\Q^k_{d}(\A_{t'})$. 
		If $d \geqslant \alpha_4(\A,k)$, then for any $\delta \geqslant 1$, 
		there is no component of $\Q^k_{d}(\A_{t'})$ 
		whose general point is in $Z_\delta$.
	\end{proposition}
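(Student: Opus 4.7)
The plan is to parametrize $Z_\delta$ by a scheme $X_\delta$ whose dimension can be explicitly bounded, and then compare with the lower bound on the dimension of any irreducible component of $\Q^k_d(\A_{t'})$. The key geometric fact that makes this parametrization possible is that on the smooth curve $C$ every coherent sheaf $F$ of positive rank admits a canonical splitting $F \cong F^{\mathrm{tors}} \oplus F^{\mathrm{lf}}$, since $F^{\mathrm{lf}}$ is locally free and $\mathrm{Ext}^1(F^{\mathrm{lf}}, T) = H^1(F^{\mathrm{lf},\vee}\otimes T) = 0$ for any torsion sheaf $T$ on $C$.

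First, I would construct the parametrizing space. Set $V := \Q^k_{d-\delta}(\A_{t'})^0$; by definition of $V$, the universal kernel $\mathcal S'$ on $C \times V$ is locally free of rank $r-k$. Let $X_\delta := \Quot_{C \times V/V}(\mathcal S', 0, \delta)$, the relative Quot scheme of torsion quotients of length $\delta$. Since $\mathcal S'$ is locally free on a smooth curve, the projection $X_\delta \to V$ is smooth of relative dimension $(r-k)\delta$, so $\dim X_\delta = \dim V + (r-k)\delta$. There is a natural morphism $X_\delta \to \Q^k_d(\A_{t'})$ sending $(\A_{t'} \twoheadrightarrow L,\; S' \twoheadrightarrow T)$ to $\A_{t'} \twoheadrightarrow \A_{t'}/\ker(S' \to T)$; using the canonical decomposition above, this is a bijection of closed points onto $Z_\delta$. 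Hence $\dim Z_\delta \leqslant \dim V + (r-k)\delta$.

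Next, I would bound $\dim V$ by splitting into two ranges of $\delta$. For $\delta$ with $d - \delta \geqslant \alpha_3(\A,k)$, Theorem \ref{theorem 6.3} says that any irreducible component of $\Q^k_{d-\delta}(\A_{t'})$ meeting the open subset $V$ must coincide with $\overline{\Q^k_{d-\delta}(\A_{t'})^s}$, which has dimension $(d-\delta)r - ke + k(r-k)(1-g)$. Hence
\[
\dim Z_\delta \;\leqslant\; dr - ke + k(r-k)(1-g) - k\delta,
\]
which is strictly less than $dr - ke + k(r-k)(1-g)$ (a lower bound on the dimension of every component of $\Q^k_d(\A_{t'})$, by \cite[Proposition 2.2.8]{HL}) since $k, \delta \geqslant 1$. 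For the remaining finite range $m_q(\A,k) \leqslant d-\delta < \alpha_3(\A,k)$ (non-emptiness of $V$ forces $d-\delta \geqslant m_q(\A,k)$), the projectivity of the relative Quot scheme $\Q^k_{d'}(\A) \to T$ yields a uniform bound $N(d') := \dim \Q^k_{d'}(\A)$ on $\dim \Q^k_{d'}(\A_{t'})$ for all $t' \in T$. Letting $N$ be the maximum of $N(d')$ over this finite range, we obtain $\dim Z_\delta \leqslant N + (r-k)\delta \leqslant N + (r-k)(d - m_q(\A,k))$, which is strictly smaller than $dr - ke + k(r-k)(1-g)$ once $d$ exceeds an explicit linear threshold in $\A, k$.

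Defining $\alpha_4(\A,k)$ as the maximum of $\alpha_3(\A,k)$ and this linear threshold, we conclude that for $d \geqslant \alpha_4(\A,k)$ and every $\delta \geqslant 1$, $\dim Z_\delta$ is strictly smaller than $\dim W$ for every irreducible component $W$ of $\Q^k_d(\A_{t'})$; thus $Z_\delta$ cannot contain a general point of any component. The main obstacle is the handling of the ``bad'' small-$(d-\delta)$ range in which Theorem \ref{theorem 6.3} does not apply directly; the essential input there is uniformity in $t' \in T$ of the dimension bound on low-degree Quot schemes, which is secured by the projectivity of the relative Quot scheme over $T$.
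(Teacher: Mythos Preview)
Your proof is correct and follows essentially the same approach as the paper: parametrize $Z_\delta$ by pairs consisting of a locally free quotient of degree $d-\delta$ and a length-$\delta$ torsion quotient of the corresponding kernel, bound $\dim Z_\delta$ by $\dim \Q^k_{d-\delta}(\A_{t'})^0 + (r-k)\delta$, and then split into the range $d-\delta\geqslant\alpha_3(\A,k)$ (where Theorem~\ref{theorem 6.3} controls $\dim V$) versus the remaining finite range (where a uniform-in-$t'$ bound suffices). The only cosmetic difference is that the paper packages the uniform bound via the excess quantity $\vartheta_{d'}:=\max_{t}\bigl\{\dim \Q^k_{d'}(\A_t)^0-(d'r-ke+k(r-k)(1-g))\bigr\}$ and the threshold $M=\max_{d'\in S}\{d'+\vartheta_{d'}/k\}$, whereas you use the cruder bound $N(d')=\dim \Q^k_{d'}(\A)$; both lead to the same conclusion.
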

	\begin{proof}
		For every integer $d'\in S'$, define 
		$$ \vartheta_{d'} := \max_{t\in T\,,\, \Q^k_{d'}(\A_t)^0\neq \emptyset}
		\Big{\{}\dim \Q^k_{d'}(\A_t)^0 - (d'r-ke+k(r-k)(1-g))\Big{\}}\,.$$
        The number $\vartheta_{d'}$ measures the
        maximum excess of the dimension of the locus $\Q^k_{d'}(\A_t)^0$ over its expected dimension,
        taken over all the fibers over $t \in T$, when the degree $d'$ is small.
        This number will be used to bound the dimension of the locus $Z_\delta$ uniformly over $t \in T$, once we relate the dimension of $Z_{\delta}$ with the quantity $\dim \Q^k_{d-\delta}(\A_t)^0$, which is done later in the proof.
        
		Note that $\vartheta_{d'}<\infty$ as the dimension of the fibers of the map $\Q^k_{d'}(\A)\to T$ are bounded above.
		We know that if $d' \geqslant \alpha_3(\A,k)$ 
		then $\vartheta_{d'} = 0$ by Theorem \ref{theorem 6.3}.
		Let $S$ be the set of integers $d'\in S'$ 
		for which $\vartheta_{d'}>0$. Then $S$ is finite. Let 
		$$ M := \max_{d'\in S} \{d' + \frac{\vartheta_{d'}}{k}\}
		\quad \textrm{ and } \quad \alpha_4(\A,k):= \max\{[M]+1,\alpha_3(\A,k)\}\,.$$
		
		Assume $d \geqslant \alpha_4(\A,k)$. 
		Then for any $d'\in S$ we have 
		\begin{equation}\label{eq-omega}
			\vartheta_{d'}-k(d-d')<0\,.   
		\end{equation}
		If possible, let $W$ be a component of $\Q^k_{d}(\A_{t'})$ 
		whose general point is in $Z_\delta$.
		Let $[\vp:\A_{t'} \to F]$ be a general point in $W$ which 
		is in $Z_\delta$. 
        The kernel $S_F$ of $\vp$ is a torsion-free sheaf on the smooth curve $C$ and hence is locally free.
		By \cite[Proposition 2.2.8]{HL} we have 
		\begin{align*}
			\dim_{[\vp]}\Q^k_{d}(\A_{t'}) &\geqslant \hom(S_F,F) - \ext^1(S_F,F)\\ 
			& = dr-ke+k(r-k)(1-g) \,.
		\end{align*}
		So we have 
		\begin{equation}\label{dimension Z}
			\dim Z_\delta \geqslant \dim W \geqslant  dr-ke+k(r-k)(1-g)\,.
		\end{equation}
		
		We may compute the dimension of $Z_\delta$ in a different way 
		as follows. Let $[\vp:\A_{t'} \to F]$ be a closed point in $Z_\delta$.
		Let $\tau$ be the torsion subsheaf of $F$ and let $F'$ be the locally free quotient $F/\tau$.
 		Then we can construct the following diagram such that the rows and columns are short exact sequences
		\begin{equation}\label{Z_delta diagram}
			\begin{tikzcd}
				& & 0 \arrow{d}{} & 0 \arrow{d}{}\\
				0 \arrow{r}{} & S_F \arrow{r}{} \ar[d,-,double equal sign distance,double] & S_{F'} \arrow{d}{} \arrow{r}{} & \tau \arrow{d}{} \arrow{r}{} & 0\\
				0 \arrow{r}{} & S_F \arrow{r}{} & \A_{t'} \arrow{d}{} \arrow{r}{\vp} & F  \arrow{r}{} \arrow{d}{} & 0 \\
				& & F' \ar[r,-,double equal sign distance,double]  \arrow{d}{} & F' \arrow{d}{} \\
				& & 0  & 0 \,.\\
			\end{tikzcd}
		\end{equation}
        Clearly, the quotient $\A_{t'} \to F'$ corresponds to a closed point in the locus $\Q^k_{d-\delta}(\A_{t'})^0$
        and the quotient $S_{F'} \to \tau$ corresponds to a point in the Quot scheme $\Quot_{C/\C}(S_{F'},0,\delta)$.
        Moreover, given two closed points $[\A_{t'} \to F'] \in \Q^k_{d-\delta}(\A_{t'})^0$ and 
        $[S_{F'} \to \tau] \in \Quot_{C/\C}(S_{F'},0,\delta)$,
        we can construct a similar diagram to get back 
        the point $[\vp: \A_{t'} \to F ] \in Z_\delta$.
		This gives us a dimension estimate for $Z_\delta$ as follows
		\begin{align*}
			\dim Z_\delta &\leqslant \dim(\Q^k_{d-\delta}(\A_{t'})^0)+ 
			\dim(\Quot_{C/\C}(S_{F'},0,\delta))\\
			&\leqslant   \vartheta_{d-\delta} + (d-\delta)r-ke+k(r-k)(1-g)  + 
			\delta(r-k) \\
			& = \vartheta_{d-\delta} + dr-ke+k(r-k)(1-g) - \delta k \,.
		\end{align*}
		The second inequality is by the definition of $\vartheta_{d-\delta}$.
        So 
		$$ \dim Z_\delta - (dr-ke+k(r-k)(1-g))\leqslant 
		\vartheta_{d-\delta} -\delta k\,.$$
		If $\vartheta_{d-\delta}=0$ then the RHS is negative, which  
		contradicts equation \eqref{dimension Z}. 
		If $\vartheta_{d-\delta}>0$ then $d-\delta\in S$.
		As $d \geqslant \alpha_4(\A,k)$, the RHS is negative due to 
		\eqref{eq-omega}, which is again a contradiction to the equation
		\eqref{dimension Z}.
		This proves the proposition.
	\end{proof}
	
	The above is essentially a relative Quot scheme argument which we will use again. 
	For more details, we refer the reader to \cite[Lemma 5.2]{gs22}.
	
	\begin{corollary}\label{corollary irreducible fibers}
		If $d\geqslant \alpha_4(\A,k)$, then for every closed point $t\in T$, the Quot 
		scheme $\Q^k_{d}(\A_t)$ is irreducible of dimension $dr-ke+k(r-k)(1-g)$. 
	\end{corollary}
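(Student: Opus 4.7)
The plan is to combine the three structural results already established in this section to conclude that the Quot scheme has only one irreducible component. Fix a closed point $t \in T$ and let $W$ be an arbitrary irreducible component of $\Q^k_{d}(\A_t)$. Using the set-theoretic decomposition
\[
\Q^k_{d}(\A_t) \;=\; \Q^k_{d}(\A_t)^0 \;\sqcup\; \bigsqcup_{\delta \geqslant 1} Z_\delta,
\]
where each $Z_\delta$ is locally closed and $\Q^k_{d}(\A_t)^0$ is open (by openness of the locally free locus), I observe that the general point of $W$ must lie in exactly one stratum.

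The first step is to rule out every torsion stratum. Since by construction $\alpha_4(\A,k) \geqslant \alpha_3(\A,k) \geqslant \alpha_1(\A,k)$, and $d \geqslant \alpha_4(\A,k)$, Proposition \ref{theoerm Zdelta} applies and shows that no component of $\Q^k_{d}(\A_t)$ can have its general point inside $Z_\delta$ for any $\delta \geqslant 1$. Therefore the general point of $W$ must sit in $\Q^k_{d}(\A_t)^0$, i.e.\ it corresponds to a locally free quotient.

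Next, since $d \geqslant \alpha_3(\A,k)$, I invoke Theorem \ref{theorem 6.3}, which asserts that there is a unique irreducible component of $\Q^k_{d}(\A_t)$ whose general point is a locally free quotient, namely $\overline{\Q^k_{d}(\A_t)^s}$. Hence $W = \overline{\Q^k_{d}(\A_t)^s}$, and since $W$ was arbitrary, this is the only component. Its dimension equals $dr - ke + k(r-k)(1-g)$ by Proposition \ref{theorem 6.1}, completing the proof. I expect no real obstacle here: the entire work has already been done in Proposition \ref{theoerm Zdelta} (handling torsion) and Theorem \ref{theorem 6.3} (uniqueness in the locally free locus); this corollary is just the straightforward assembly. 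The only bookkeeping point to verify is the inequality $\alpha_4(\A,k) \geqslant \alpha_3(\A,k)$, which is built into the definition of $\alpha_4$.
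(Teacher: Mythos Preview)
Your proof is correct and follows essentially the same approach as the paper: apply Proposition~\ref{theoerm Zdelta} to rule out the torsion strata $Z_\delta$, then invoke Theorem~\ref{theorem 6.3} to identify the unique remaining component as $\overline{\Q^k_{d}(\A_t)^s}$, with its dimension given by Proposition~\ref{theorem 6.1}. The paper's argument is slightly terser but the logic is identical.
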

	\begin{proof}
		Fix a closed point $t\in T$.
		Proposition \ref{theoerm Zdelta} shows that the points of $Z_\delta$ 
		cannot be general in any component of $\Q^k_{d}(\A_t)$. Thus, given 
		any component, the general point will be such that the quotient is locally
		free. However, by Theorem \ref{theorem 6.3}, there is only one such 
		component, namely, $\overline{\Q^k_{d}(\A_t)^s}$. The dimension of this 
		component was computed in Proposition \ref{theorem 6.1}. This completes
		the proof of the Corollary. 
	\end{proof}
	
	\begin{theorem}\label{theorem irreducibility of relative quot}
		Let $T$ be an irreducible scheme. Let $\A$ be a locally 
		free sheaf on $C\times T$ of rank $r$, such that each $\A_t$ has
		degree $e$. There is a number $\alpha(\A,k)$ such that if 
		$d \geqslant \alpha(\A,k)$ then the structure morphism $\pi:\Q^k_{d}(\A)\to T$
		has the following properties
		\begin{enumerate}
			\item The fibers are irreducible of dimension $dr-ke+k(r-k)(1-g)$.
			\item The relative Quot scheme $\Q^k_{d}(\A)$ is 
			irreducible of dimension 
			$${dr-ke+k(r-k)(1-g)+\dim T\,.}$$
			\item \label{lci+flat} $\pi$ is a local complete intersection morphism and flat.
			\item If $T$ is reduced, then $\Q^k_{d}(\A)$ is generically smooth.
			\item Let $T$ be reduced and assume the singular locus of $T$ 
			has codimension $\geqslant 2$. There is $\alpha'(\A,k)$ 
			such that for all $d\geqslant \alpha'(\A,k)$ the singular locus 
			of $\Q^k_{d}(\A)$ has codimension $\geqslant 2$. 
		\end{enumerate}
	\end{theorem}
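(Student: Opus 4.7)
My plan is to take $\alpha(\A,k) := \max\{\alpha_4(\A,k), m_{\max}(\A,k)\}$. The first bound gives part (1) immediately from Corollary \ref{corollary irreducible fibers}, and the second ensures that $\pi$ is surjective on closed points by Remark \ref{define m_min and m_max}. For part (2), since $T$ is irreducible and every fiber of $\pi$ is irreducible of the same dimension $N := dr - ke + k(r-k)(1-g)$, standard arguments on dominant morphisms with irreducible equidimensional fibers show that $\Q^k_{d}(\A)$ is itself irreducible of dimension $\dim T + N$.

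For part (3), I would apply Lemma \ref{lemma dimension bound of relative quot} at an arbitrary closed point $(t,\vp)$ with $\vp : \A_t \to F$ and kernel $S_F$, giving
\begin{equation*}
\hom(S_F,F) \;\geqslant\; \dim_{(t,\vp)} \Q^k_{d}(\A) - \dim T \;\geqslant\; \hom(S_F,F) - \ext^1(S_F,F) = \chi(S_F^\vee \otimes F) = N.
\end{equation*}
By part (2) the middle quantity equals $N$, so the lower bound is attained at every point. By the standard construction of Quot schemes on curves, $\Q^k_{d}(\A)$ embeds locally over $T$ as the zero scheme of a section $\sigma$ of a vector bundle on a Grassmannian bundle which is smooth over $T$; the equality of dimensions just obtained forces $\sigma$ to be a fibrewise regular section. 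This simultaneously exhibits $\pi$ as a local complete intersection morphism and, by the local flatness criterion for regular sections, proves that $\pi$ is flat. For part (4), if $T$ is reduced, let $t$ lie in the smooth locus of $T$. By Theorem \ref{theorem 6.3} a general quotient in $\Q^k_{d}(\A_t)$ is a stable vector bundle, at which $\ext^1(S_F,F) = 0$ by Lemma \ref{lemma 6.2}; hence the fiber is smooth there, and combined with smoothness of $T$ at $t$ and flatness of $\pi$, this yields smoothness of $\Q^k_{d}(\A)$ on a dense open subset.

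For part (5), the singular locus of $\Q^k_{d}(\A)$ is contained in the union of $\pi^{-1}(T_{\textnormal{sing}})$ and the locus $B$ of points where the fiber of $\pi$ is singular. The first has codimension $\geqslant 2$ by flatness of $\pi$ and the assumption on $T$. By the deformation-theoretic criterion, $B$ is contained in the locus where $\ext^1(S_F,F) > 0$, equivalently, by Serre duality, where $\Hom(F, S_F \otimes \omega_C) \neq 0$. The main obstacle, which is the heart of this part, is to bound the codimension of this locus by $2$, uniformly in $t$, for all sufficiently large $d$. I would attack it by stratifying according to the rank and degree of the image of a nonzero map $F \to S_F \otimes \omega_C$, and then applying dimension estimates in the spirit of Lemma \ref{lemma 6.3} and Remark \ref{remark dimension bound locus having quotients}, choosing $\alpha'(\A,k)$ large enough that each stratum has codimension at least $2$ in $\Q^k_{d}(\A)$, uniformly over $t \in T$.
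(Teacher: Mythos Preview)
Your treatment of parts (1)--(4) is essentially the same as the paper's and is correct. For (3) the paper cites Koll\'ar's criterion \cite[Chapter~1, Theorem~5.17]{Kol96} rather than the Grassmannian presentation, and then deduces flatness by an explicit local-ring argument using \cite[\href{https://stacks.math.columbia.edu/tag/00MG}{Tag 00MG}]{Stk}; your ``fibrewise regular section'' formulation is the same content. For (4) the paper argues directly from \cite[Proposition~2.2.7]{HL} and Lemma~\ref{lemma 6.2}, without invoking flatness, but your route is fine too.

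For part (5) there is a genuine gap. Your stratification of $B$ by the rank $k_0$ and degree $d_0$ of the image of a nonzero $F \to S_F \otimes \omega_C$, together with Remark~\ref{remark dimension bound locus having quotients}, only controls the strata with $0 < k_0 < k$: that remark (and Lemma~\ref{lemma 6.3}) is stated and proved under the hypothesis $k_0 < k$. When $k \leqslant r-k$ the image can have full rank $k_0 = k$; if $F$ is locally free this forces $F \cong F_0 \subset \A_t \otimes \omega_C$ and a degree contradiction for $d$ large, but if $F$ has torsion this argument does not apply, and your sketch gives no bound on that part of $B$. The paper closes this gap by treating the torsion locus separately: it shows that for $d-1 \geqslant \alpha_4(\A,k)$ the locus $\tilde Z_{\geqslant 1}=\{(t,[\vp]):\Tor(F)\neq 0\}$ is irreducible, and that its general point has the form $(t,[\A_t \to F_0 \oplus \C_c])$ with $t$ smooth and $F_0$ stable of degree $d-1$, whence $H^1(S_F^\vee \otimes F)=H^1(S_F^\vee \otimes F_0)=0$ by Lemma~\ref{lemma 6.2}. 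Thus $\tilde Z_{\geqslant 1}\cap X$ is a proper closed subset of the irreducible $\tilde Z_{\geqslant 1}$ and hence has codimension $\geqslant 2$ in $\Q^k_d(\A)$. Without this additional step your codimension bound on the singular locus is incomplete.
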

	\begin{proof}
		Define $$\alpha(\A,k) : = \alpha_4(\A,k)\,.$$
		That the fibers of $\pi$ are irreducible of given dimension
		is the content of Corollary \ref{corollary irreducible fibers}.
		Since $T$ is assumed to be irreducible, (2) follows from (1). 
		
		For any closed point $(t,[\vp])$ in $\Q^k_{d}(\A)$, 
		by Lemma \ref{lemma dimension bound of relative quot}, we have
		\begin{equation}\label{dimension inequality of fibers}
			\dim_{(t,\varphi)} \Q^k_{d}(\A) -\dim T \geqslant \hom(S_F,F)-{\rm ext}^1(S_F,F)\,, 
		\end{equation}
		where $S_F$ is the kernel of $\vp$.
		As $\Q^k_{d}(\A)$ is irreducible, it has the same dimension
		at all points, given by $dr-ke+k(r-k)(1-g)+\dim T$. It follows that 
		the quantity on the left hand side of \eqref{dimension inequality of fibers} is equal to 
		$dr-ke+k(r-k)(1-g)$ for any point $(t,[\vp])$.
		Using Riemann-Roch, the quantity on the right hand side is equal to 
		$dr-ke+k(r-k)(1-g)$ for any point $(t,[\vp])$.
		This shows that we have equality 
		\begin{equation*}
			\dim_{(t,\varphi)} \Q^k_{d}(\A) -\dim T = \hom(S_F,F) - {\rm ext}^1(S_F,F) 
		\end{equation*}
		for any point $(t,[\vp])$.
		By \cite[Theorem 5.17, Chapter 1]{Kol96}, we conclude that 
		$\Q^k_{d}(\A) \to T$ is a local complete intersection morphism at 
		any point $(t,[\vp])$. 
		
		{Next we want to show that the morphism $\Q^k_{d}(\A) \to T$
		is flat. For this we will use \cite[\href{https://stacks.math.columbia.edu/tag/00MG}{Tag 00MG}]{Stk}.
		As $\Q^k_{d}(\A) \to T$ is a local complete intersection morphism, we get 
		the following for some integers $n\geqslant c>0$. 
		Let $R$ denote the local ring $\mc O_{T,t}$ and let 
		$R[X_1,\ldots,X_n]$ denote the polynomial ring in $n$ variables.
		The local ring of $\Q^k_{d}(\A)$ at the point $(t,\vp)$
		is isomorphic to $R[X_1,\ldots,X_n]_{\mathfrak n}/(f_1,\ldots,f_c)$,
		where $\mathfrak n\subset R[X_1,\ldots,X_n]$ is a maximal ideal and 
		$(f_1,\ldots,f_c)$ is a regular sequence in the local ring 
		$S:=R[X_1,\ldots,X_n]_{\mathfrak n}$. It is clear that 
		$$\dim \Q^k_{d}(\A)= dr-ke+k(r-k)(1-g)+\dim T=\dim T+n-c\,.$$
		The local ring of $\Q^k_{d}(\A_t)$ at the point $\vp$ 
		is given by going modulo the maximal ideal $\mathfrak m\subset R$. 
		This ring is $\C[X_1,\ldots,X_n]_{\bar{\mathfrak n}}/(\bar f_1,\ldots,\bar f_c)$.
		As the dimension of this ring is 
		$$dr-ke+k(r-k)(1-g)=n-c\,,$$ 
		applying \cite[Theorem 8.21A(c), Chapter 2]{Ha} to the Cohen-Macaulay
		ring $\C[X_1,\ldots,X_n]_{\bar{\mathfrak n}}$, 
		it follows
		that $(\bar f_1,\ldots,\bar f_c)$ is a regular sequence in 
		$\C[X_1,\ldots,X_n]_{\bar{\mathfrak n}}$.
		Apply  
		\cite[\href{https://stacks.math.columbia.edu/tag/00MG}{Tag 00MG}]{Stk}
		to the flat homomorphism $R\to S$.
		It follows that 
        $S/(f_1,\ldots,f_c)$ is flat over $R$.
        Hence, $\Q^k_{d}(\A)$ is flat over $T$. This proves (3)}.
		
		(4) is proved easily using Lemma \ref{lemma 6.2} and \cite[Proposition 2.2.7]{HL}.
		
		Using \cite[Proposition 2.2.7]{HL}, we see that a point $(t,[\vp:\A_t\to F])$ 
		is a smooth point of $\Q^k_{d}(\A)$ if $t$ is a smooth point of $T$ 
		and $H^1(S_F^\vee\otimes F)=0$. It follows that the singular locus 
		$${\rm Sing}(\Q^k_{d}(\A))\subset \pi^{-1}({\rm Sing}(T))\cup\{(t,[\vp])\,\,\vert\,\, \text{$t$ a smooth closed point of $T$, }H^1(S_F^\vee\otimes F)\neq 0\}\,.$$
		Using flatness of $\pi$, it follows that 
		$\pi^{-1}({\rm Sing}(T))$ has codimension $\geqslant 2$.
		We will now show that the space 
		$$X:=\{(t,[\vp])\,\,\vert\,\, \text{$t$ a smooth closed point of $T$, }H^1(S_F^\vee\otimes F)\neq 0\}$$
		has codimension $\geqslant 2$ when $d\gg0$. 
		
		Recall the definition of $\alpha_3(\A,k)$ 
			from the proof of Lemma \ref{H^1=0}. Recall the finite collection $\ms C$ which 
			appears in the proof of Lemma \ref{H^1=0}. 
			Denote the locus of points 
			$(t,[\vp:\A_t\to F])\in \Q^k_{d}(\A)$ for which $F$
			has a locally free quotient of rank $k'$ and degree $d'$,
			for $(k',d')\in \ms C$, by $X'$.
			For every tuple
			$(k',d')$ which appears in $\ms C$, we apply 
			Remark \ref{remark dimension bound locus having quotients}.
			We see that for all 
			$d\gg0$ we have $X'$
			has dimension $\leqslant d(r-1)+M'$. 
			Let $\alpha_5(\A,k)\geqslant \alpha_4(\A,k)$ be such that for 
			any $d \geqslant \alpha_5(\A,k)$, we have
			$$ (dr-ke+k(r-k)(1-g)) - (d(r-1)+M')\geqslant2\,.$$
			Note that by construction, we have $\alpha_5(\A,k)\geqslant\alpha_4(\A,k)\geqslant\alpha_3(\A,k)$.

		Let $d \geqslant {\rm max}\{\alpha_5(\A,k),\alpha_4(\A,k)+2\}$. Consider a point 
		$(t,[\vp:\A_t\to F])\in \Q^k_{d}(\A)^0\cap X$. As $h^1(S_F^\vee\otimes F)\neq 0$,
		it follows that $h^0(F,S_F\otimes \omega_C)\neq0$. Thus, there is a nonzero
		homomorphism $F\to \A_t\otimes \omega_C$. Let $F_0$ denote the image.
		{Let $\rk(F_0)=k_0$ and ${\rm deg}(F_0)=d_0$. If $k_0=k$, then 
		as $F$ is locally free, $F\stackrel{\sim}{\to}F_0$ is an isomorphism.
		By Lemma \ref{degree bound}, it follows that 
		$d= {\rm deg}(F)\leqslant m_s(\A,k)+k(2g-2)$, which is a contradiction as 
		$d\geqslant \alpha_3(\A,k)\geqslant m_s(\A,k)+k(2g-2)+1$.
		Thus, we assume that $0<k_0<k$. 
        As $F_0$ is quotient of $\A_t$ as well as a subsheaf of $\A_t \otimes \omega_C$,
        it follows that $m_q(\A,k_0) \leqslant d_0 \leqslant m_s(\A \otimes p_C^*\omega_C,k_0)$.
        So $(k_0,d_0)\in \ms C$
		and hence $(t,[\vp:\A_t\to F])\in X'$. 
        This shows that 
		$\Q^k_{d}(\A)^0\cap X\subset X'$. It follows that $\Q^k_{d}(\A)^0\cap X$ 
		has codimension $\geqslant 2$ in $\Q^k_{d}(\A)$.}

		For any sheaf $F$ on $C$, let $\Tor(F)$ denote the torsion subsheaf of $F$.
		Let $\tilde Z_{\geqslant i}$ be the locus of pairs $(t,[\vp:\A_t\to F])$
		such that ${\rm length}(\Tor(F))\geqslant i$. 
		Using a slight modification of the arguments in 
		\cite[Lemma 5.2]{gs22}, we may write $\tilde Z_{\geqslant i}$
		as the image of a surjective map from a relative Quot scheme.
		The base of this relative Quot scheme is $\Q^k_{d-i}(\A)$. 
		Using part (2) proved above, if $d-i\geqslant \alpha_4(\A,k)$, 
		we see that the  relative Quot scheme, and so also the 
		locus $\tilde Z_{\geqslant i}$, is irreducible and 
		has codimension $ik\geqslant i$. We claim that 
		if $d-1\geqslant \alpha_4(\A,k)$ then $\tilde Z_{\geqslant 1}$ 
		contains a point $(t,[\vp:\A_t\to F])$ such that 
		$t$ is a smooth point of $T$ and $H^1(S_F^\vee\otimes F)=0$. 
		Assume $d-1\geqslant \alpha_4(\A,k)$. Then $\Q^k_{d-1}(\A)$
		is irreducible and the general point corresponds to 
		$(t,[\vp:\A_t\to F_0])$, where $F_0$ is stable. 
		Thus, a general point of $\tilde Z_{\geqslant 1}$ is a pair $(t,[\vp:\A_t\to F_0\oplus \C_c])$,
		where $t$ is a smooth point of $T$ and $F_0$ is a stable bundle of degree $d-1$
		and $\C_c$ is the skyscraper sheaf at a point $c\in C$.
		Using Lemma \ref{lemma 6.2},
		$$H^1(S_F^\vee\otimes F)=H^1(S_F^\vee\otimes F_0)=0\,.$$
		This shows that the general point of $\tilde Z_{\geqslant 1}$
		is not in $X$. So we have proper inclusions 
        $$\left(\tilde Z_{\geqslant 1}\cap X\right)\ \subsetneqq
        \tilde Z_{\geqslant 1} \subsetneqq \Q^k_{d}(\A)\,.$$
		As $\Q^k_{d}(\A)$ and $\tilde Z_{\geqslant 1}$ are irreducible, 
		it follows that $\tilde Z_{\geqslant 1}\cap X$ has codimension $\geqslant 2$ 
		in $\Q^k_{d}(\A)$. Since $\Q^k_{d}(\A)=\Q^k_{d}(\A)^0\sqcup \tilde Z_{\geqslant 1}$, it follows
		that $X$ has codimension $\geqslant 2$ 
		in $\Q^k_{d}(\A)$.
		
		{Taking $\alpha'(\A,k)={\rm max}\{\alpha_5(\A,k),\alpha_4(\A,k)+2\}$
		proves (5).}
	\end{proof}

	We remark that the condition $\A$ is locally free 
	can not be dropped. For example, as the next Proposition shows, if 
	we take $T$ to be a point and $E$ to be a sheaf on $C$ which has torsion, 
	then the Quot scheme $\Quot_{C/\C}(E,k,d)$ will be reducible when $d\gg0$.
	
	\begin{proposition}\label{reducibility of Quot scheme}
		Let $E$ be a coherent sheaf on $C$ of rank $r>1$ and degree $e$ which 
		has torsion. Let $k,d$ be integers such that $0<k<r$ and assume $d\gg0$. 
		Then the Quot scheme $\Q^k_d(E)$ is reducible.
	\end{proposition}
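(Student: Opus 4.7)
The plan is to exhibit two distinct irreducible components of $\Q^k_d(E)$ for $d\gg 0$. Let $\tau\subset E$ denote the torsion subsheaf, set $\ell:=\len(\tau)>0$, and let $E':=E/\tau$, which is locally free of rank $r$ and degree $e-\ell$. The first component will be the closed subscheme $Y_1\subset \Q^k_d(E)$ of quotients that kill $\tau$, and the second will be witnessed by constructing an explicit point outside $Y_1$.

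For the first component: a quotient $\vp:E\to F$ with $\vp|_\tau=0$ is the same datum as a quotient of $E'$, so $Y_1\cong \Q^k_d(E')$ as closed subschemes. Since $E'$ is locally free, Theorem \ref{theorem irreducibility of relative quot} (with $T$ a point) gives that for $d\gg 0$, $Y_1$ is irreducible of dimension $d_1:=dr-k(e-\ell)+k(r-k)(1-g)$. Note that $d_1$ exceeds the ``expected dimension'' $dr-ke+k(r-k)(1-g)$ of $\Q^k_d(E)$ by $k\ell$. To upgrade ``closed irreducible subscheme'' to ``component'', I compute the Zariski tangent space at a general point $[\vp:E\to F]\in Y_1$ where $F$ is stable and locally free. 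Writing $S_F:=\ker\vp$ and $S'_F:=\ker(E'\to F)$, the exact sequence $0\to \tau\to S_F\to S'_F\to 0$ combined with $\Hom(\tau,F)=0$ (as $F$ is torsion-free) and $\mathrm{Ext}^1(S'_F,F)=0$ (from Lemma \ref{lemma 6.2} applied to the locally free sheaf $E'$) gives $\dim T_{[\vp]}\Q^k_d(E)=\hom(S_F,F)=\hom(S'_F,F)=\chi(S'_F,F)=d_1$. Thus $\Q^k_d(E)$ is smooth of dimension $d_1=\dim Y_1$ at a general point of $Y_1$, forcing $Y_1$ to be an irreducible component.

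For the second component: I construct a quotient $\vp:E\to F$ of rank $k$ and degree $d$ on which $\vp|_\tau$ is nonzero. Fix $p\in\supp(\tau)$ and a surjection $s:\tau\twoheadrightarrow \C_p$ (available by Nakayama on the stalk $\tau_p$). Pick a locally free quotient $q_0:E'\twoheadrightarrow F_0$ of rank $k$ and degree $d-1$ (which exists for $d\gg 0$) and set $K:=\ker(E\to F_0)$, giving $0\to \tau\to K\to K'\to 0$ with $K':=\ker(q_0)$ locally free. Since $K'$ is locally free, Serre duality yields $\mathrm{Ext}^1(K',\C_p)\cong \Hom(\C_p,K'\otimes\omega_C)^\vee=0$, so $\Hom(K,\C_p)\twoheadrightarrow \Hom(\tau,\C_p)$ is surjective and $s$ lifts to a map $\psi:K\to \C_p$, which is surjective because $\psi|_\tau=s$ is. Setting $F:=E/\ker\psi$, one obtains an extension $0\to \C_p\to F\to F_0\to 0$, i.e.\ a rank-$k$, degree-$d$ quotient of $E$ whose restriction to $\tau$ equals $s\neq 0$. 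Hence $[\vp]\in \Q^k_d(E)\setminus Y_1$, so $\Q^k_d(E)$ has at least one further component, and is reducible.

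The main obstacle is showing that $Y_1$ is a full component and not merely a closed subscheme of a bigger one; everything hinges on the tangent-space calculation, which works because the ``extra'' $k\ell$ dimensions of $Y_1$ over the expected dimension are exactly balanced by the jump in $\mathrm{Ext}^1(S_F,F)$ coming from $\tau$ (via the nonvanishing $\mathrm{Ext}^1(\tau,F)\neq 0$). The explicit lifting argument for the second component, in turn, relies crucially on $K'$ being locally free so that the Serre-duality vanishing $\mathrm{Ext}^1(K',\C_p)=0$ applies.
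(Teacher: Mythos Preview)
Your proof is correct and follows the same overall strategy as the paper's: identify $Y_1\cong\Q^k_d(E')$ as an irreducible component, then produce a closed point outside it. The execution differs in both sub-steps, however. To show $Y_1$ is a component, the paper observes that every locally free quotient $E\to F$ factors through $E'$, so the open locus $\Q^k_d(E)^0$ is already contained (set-theoretically) in the closed subscheme $\Q^k_d(E')$; since $\Q^k_d(E')$ is irreducible and contains a nonempty open of $\Q^k_d(E)$, it is a component. This is shorter than your tangent-space computation, though your argument has the bonus of proving $\Q^k_d(E)$ is actually smooth at such points. For the point outside $Y_1$, the paper simply uses that on a smooth curve one has $E\cong E'\oplus\tau$, so for any quotient $q':E'\to F$ of rank $k$ and degree $d-\ell$, the map $q'\oplus\id_\tau:E'\oplus\tau\to F\oplus\tau$ does not kill $\tau$; this avoids your Serre-duality lifting entirely. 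Both routes are valid, but the paper's are noticeably more elementary.
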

	\begin{proof}
		Let $\T$ be the torsion subsheaf of $E$ and $E'$ be the locally free quotient $E/\T$.
		Let the length of $\T$ be $\ell$. Then the degree of $E'$ is $e-\ell$.
		The quotient $E \to E'$ gives a closed immersion of Quot schemes 
		$$\Q^k_{d}(E') \hookrightarrow \Q^k_{d}(E)\,.$$
		Now any locally free quotient $q: E \to F$ factors through $E'$ and 
		hence gives a quotient $q': E' \to F$.
		This correspondence gives a bijection between closed points 
		of $\Q^k_d(E)^0$ and $\Q^k_d(E')^0$.
		As $\Q^k_{d}(E)^0$ is an open set in $\Q^k_d(E)$,
		it follows that $\Q^k_d(E')^0$ is an open set of $\Q^k_d(E)$.
		By \cite{PR03} {(or by taking $T$ to be a point and applying
		Theorem \ref{theorem irreducibility of relative quot})}, $\Q^k_d(E')$ is irreducible for $d\gg0$. It follows 
		that $\Q^k_d(E')$ is an irreducible component of $\Q^k_d(E)$.
		However, the inclusion $\Q^k_{d}(E') 
        \hookrightarrow \Q^k_{d}(E)$ is not surjective on closed points.
        To see this, we write $E = E' \oplus \T$, as 
        $C$ is a smooth curve.
        Let $q':E' \to F$ be a quotient of rank $k$ and degree $d-l$.
        Then $[q' \oplus id_\T : E'\oplus \T \to F \oplus \T]$ is a point of the Quot scheme $\Q^k_d(E)$.
        But the quotient $q'\oplus Id_\T$ does not factor through 
        $E'$ as the composition $\T \hookrightarrow  E \to E'$ is zero.
		Thus, we conclude that the inclusion $\Q^k_{d}(E') 
        \hookrightarrow \Q^k_{d}(E)$ is not bijective on closed points and hence $\Q^k_d(E)$ is reducible.
	\end{proof}

	\section{Irreducibility of nested Quot schemes when $d_1\gg d_2\gg0$}
	
	Let $C$ be a smooth projective curve on $\C$ of genus $g \geqslant 1$
	and $E$ be a locally free sheaf on $C$ of rank $r$ and degree $e$.
	Let $d_1,d_2,k_1,k_2$ be integers such that $0< k_2 < k_1 < r$.
	Let $p:C\times T\to C$ denote the projection. 
	Consider the functor
	$$\mf Quot^{k_1,k_2}_{d_1,d_2}(E) : \text{Sch}/\mb C \to \text{Sets}\,,$$
	defined as follows. For any scheme $T$, $\mf Quot^{k_1,k_2}_{d_1,d_2}(E)(T)$ 
	is the set of isomorphism classes of pairs of quotients
	[$p^*E \twoheadrightarrow G_1 \twoheadrightarrow G_2$], such that 
	each $G_i$ is a $T$-flat sheaf on $C \times T$ of rank $k_i$ and degree $d_i$. 
    This functor is representable by a scheme which is of finite type over $\C$ (for example, see \cite[Section 2.A.1]{HL}),
	which we denote $\Q^{k_1,k_2}_{d_1,d_2}(E)$ and call the nested Quot scheme. 
	We will use the following construction in this section. 
	First, we consider the Quot scheme 
	$$\Q^{k_2}_{d_2}(E) : = \Quot_{C/\C}(E,k_2,d_2)\,.$$
	Let $p_C : C \times \Q^{k_2}_{d_2}(E) \to C$ be the projection.
	Let $$p_C^*E \to \F_2 \to 0$$ be the universal quotient on 
	$C \times \Q^{k_2}_{d_2}(E)$ and $\S_2$ denote the universal kernel.
	Consider the relative Quot scheme 
	$$Q:= \Quot_{C \times \Q^{k_2}_{d_2}(E)/\Q^{k_2}_{d_2}(E)}(\S_2,k_1-k_2,d_1-d_2)\,.$$
	Using the universal properties of $Q$ and $\Q^{k_1,k_2}_{d_1,d_2}(E)$,
		the reader may easily gives maps from one to the other and check their composites 
		are the identity, proving that $Q$ is isomorphic to the 
		nested Quot scheme $\Q^{k_1,k_2}_{d_1,d_2}(E)$.
    The closed points of the nested Quot scheme $\Q^{k_1,k_2}_{d_1,d_2}(E)$ parameterize 
	pairs of quotients $[E \twoheadrightarrow F_1 \twoheadrightarrow F_2]$ on $C$, such that 
	$F_1$ is of rank $k_1$, degree $d_1$ and $F_2$ is of rank 
    $k_2$ and degree $d_2$.

 	Recall the quantity (expected dimension) $\ed(d_1,d_2)$ from \eqref{def expected dimension},
	\begin{align*}
	\ed(d_1,d_2) := [d_1r-k_1e+k_1(r-k_1)(1-g)] + [ d_2k_1-d_1k_2+k_2(k_1-k_2)(1-g)]\,.	
	\end{align*}
	\begin{theorem}\label{irreducibility of nested 2}
		There exists a number $d(E,k_2)$ such that for all $d_2\geqslant d(E,k_2)$
		the following holds. There is a number $\xi(E,k_1,k_2,d_2)$ 
		such that if $d_1-d_2 \geqslant \xi(E,k_1,k_2,d_2)$ 
		then the nested Quot scheme 
		$\Q^{k_1,k_2}_{d_1,d_2}(E)$ is irreducible of dimension
        $\ed(d_1,d_2)$, a local complete intersection, integral
		and normal.
	\end{theorem}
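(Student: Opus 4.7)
The plan is to apply Theorem \ref{theorem irreducibility of relative quot} to the pair $(T,\S_2)$, where $T := \Q^{k_2}_{d_2}(E)$ and $\S_2$ is the universal kernel of $p_C^*E \twoheadrightarrow \F_2$ on $C\times T$, exploiting the identification $\Q^{k_1,k_2}_{d_1,d_2}(E)\cong Q := \Quot_{C\times T/T}(\S_2,k_1-k_2,d_1-d_2)$ already set up at the beginning of Section 4. The first thing to check is that $\S_2$ is locally free of rank $r-k_2$ on $C\times T$: since $\F_2$ is $T$-flat and $C$ is a regular one-dimensional scheme, a standard local argument (lift a length-one resolution of each fiber $\F_2|_{C\times\{t\}}$) shows $\F_2$ has tor-dimension $\leqslant 1$ over $\mc O_{C\times T}$, which forces $\S_2$ to be flat and finitely presented, hence locally free.

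Next, to secure the irreducibility and regularity hypotheses on $T$, I would invoke the results of \cite{PR03} and \cite{gs22} recalled in the introduction to fix $d(E,k_2)$ large enough that, for $d_2\geqslant d(E,k_2)$, the base $T$ is integral, normal, and a local complete intersection of the expected dimension $d_2 r - k_2 e + k_2(r-k_2)(1-g)$; in particular its singular locus has codimension $\geqslant 2$. Now fix such a $d_2$ and set $\xi(E,k_1,k_2,d_2) := \max\{\alpha(\S_2,k_1-k_2),\,\alpha'(\S_2,k_1-k_2)\}$, which is legitimately a function of $E,k_1,k_2,d_2$ since $\S_2$ is determined by these data.

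For $d_1-d_2\geqslant \xi(E,k_1,k_2,d_2)$, parts (1)--(3) of Theorem \ref{theorem irreducibility of relative quot} yield that $Q$ is irreducible and flat over $T$, and that $\pi : Q\to T$ is a local complete intersection morphism; since $T$ is itself lci over $\C$, it follows that $Q$ is lci over $\C$ and hence Cohen--Macaulay. A direct but unilluminating calculation confirms that
\[ \dim Q \;=\; \dim T + (d_1-d_2)(r-k_2) - (k_1-k_2)(e-d_2) + (k_1-k_2)(r-k_1)(1-g)\]
collapses to $\ed(d_1,d_2)$. Finally, part (5) of Theorem \ref{theorem irreducibility of relative quot} ensures the singular locus of $Q$ has codimension $\geqslant 2$, so Serre's criterion $R_1+S_2$ yields normality, and normal plus irreducible yields integral. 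The only real obstacle is bookkeeping: choosing $d(E,k_2)$ and $\xi(E,k_1,k_2,d_2)$ compatibly so that all hypotheses of Theorem \ref{theorem irreducibility of relative quot} hold simultaneously (in particular that both $\alpha$ and $\alpha'$ for $\S_2$ are exceeded), and verifying the dimension identity above. No essentially new ideas beyond the relative Quot machinery of Section 3 are required.
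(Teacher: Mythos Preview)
Your proposal is correct and follows essentially the same route as the paper: identify the nested Quot scheme with $\Quot_{C\times T/T}(\S_2, k_1-k_2, d_1-d_2)$ over $T=\Q^{k_2}_{d_2}(E)$, use \cite{PR03} and \cite{gs22} to make $T$ integral, normal and lci, then apply Theorem \ref{theorem irreducibility of relative quot}. If anything you are slightly more careful than the paper, arguing explicitly that $\S_2$ is locally free on all of $C\times T$ (the paper only notes that each fiber $(\S_2)_q$ is locally free) and taking $\xi = \max\{\alpha,\alpha'\}$ so that part (5) of Theorem \ref{theorem irreducibility of relative quot} is also available.
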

	\begin{proof}
		{
        Using \cite[Theorem 6.2, Theorem 6.4]{PR03} and \cite[Lemma 6.1, Theorem 6.3]{gs22}, 
		we get a number $d(E,k_2)$ such that 
        for all $d_2 \geqslant d(E,k_2)$,
        the Quot scheme $\Q^{k_2}_{d_2}(E)$ 
		is irreducible of dimension $(d_2r-k_2e+k_2(r-k_2)(1-g))$,
        a local complete intersection, integral and normal.} 
		Now we have the universal exact sequence
		$$ 0 \to \S_2 \to p_C^*E \to \F_2 \to 0$$
		on $C \times \Q^{k_2}_{d_2}(E)$.
		For any closed point $[q: E \to F_2]$ of $\Q^{k_2}_{d_2}(E)$, 
		the fiber $(\S_2)_q$ is the sheaf $\ker q$ which is locally free.
		So using Theorem \ref{theorem irreducibility of relative quot},
		we get a number $\xi(\S_2,k_1-k_2)=\alpha(\S_2,k_1-k_2)$ such that 
		if $d_1-d_2 \geqslant \xi(\S_2,k_1-k_2)$ then the relative Quot scheme 
		$\Quot_{C \times \Q^{k_2}_{d_2}(E)/\Q^{k_2}_{d_2}(E)}(\S_2,k_1-k_2,d_1-d_2)$ 
		and hence the nested Quot scheme 
		$\Q^{k_1,k_2}_{d_1,d_2}(E)$ is irreducible of dimension $\ed(d_1,d_2)$.
		Further, the structure map $\pi:\Q^{k_1,k_2}_{d_1,d_2}(E)\to \Q^{k_2}_{d_2}(E)$
		is a local complete intersection morphism. As $\Q^{k_2}_{d_2}(E)$ is a local 
		complete intersection, it follows that $\Q^{k_1,k_2}_{d_1,d_2}(E)$ is also 
		a local complete intersection, and so also Cohen-Macaulay. 
		By Theorem \ref{theorem irreducibility of relative quot},
		it follows that the singular locus has codimension $\geqslant 2$. 
		Thus, $\Q^{k_1,k_2}_{d_1,d_2}(E)$ is also normal. 
		As $\S_2$ depends only on $E, k_2$ and $d_2$, so we can write the constant 
		$\xi(\S_2,k_1-k_2)$ as $\xi(E,k_1,k_2,d_2)$.
	\end{proof}

  \begin{remark}\label{remark irreducibility of nested generalised}
{Given integers $l>2,k_1,\dots,k_l,d_1,\dots,d_l$ 
	such that $r > k_1 > \dots > k_l>0$,
	the nested Quot scheme $\Q^{k_1,\dots,k_l}_{d_1,\dots,d_l}(E)$ is defined similarly. 
      Theorem \ref{irreducibility of nested 2} can be extended to the nested Quot scheme $\Q^{k_1,\dots,k_l}_{d_1,\dots,d_l}(E)$ 
      (for $l>2$)
      easily using induction on $l$.
      Let $Q'$ denote the nested Quot scheme $\Q^{k_2,\dots,k_l}_{d_2,\dots,d_l}(E)$.
      Let $\S'$ denote the universal kernel over $C \times Q'$.
      For any closed point ${\bar q}=[E \to F_2 \to \cdots \to F_l]$ in $Q'$, 
      the fiber $(\S')_{\bar q}$ is the locally free sheaf $\ker (E \to F_2)$.
      Then $\Q^{k_1,\dots,k_l}_{d_1,\dots,d_l}(E)$ is isomorphic to the relative Quot scheme
      $$\Quot_{C \times Q'/Q'}(\S',k_1-k_2,d_1-d_2)\,.$$
     By induction hypothesis, $Q'=\Q^{k_2,\dots,k_l}_{d_2,\dots,d_l}(E)$ is irreducible, a local complete intersection, integral and normal when $d_2 \gg d_3 \gg \cdots \gg d_l \gg 0$.
     Applying Theorem \ref{theorem irreducibility of relative quot}, 
     it follows that $\Q^{k_1,\dots,k_l}_{d_1,\dots,d_l}(E)$ is 
     irreducible, a local complete intersection, integral and normal when $d_1 \gg d_2 \gg \cdots \gg d_l \gg 0$.
}    
    \end{remark}

	\section{Irreducibility of nested Quot schemes when $0\ll d_1\ll d_2$}
	
	As in the previous section, 
        let $d_1,d_2,k_1,k_2$ be integers such that $0< k_2 < k_1 < r$
        and we denote by
	$\Q^{k_1,k_2}_{d_1,d_2}(E)$ the nested Quot scheme.
	Next we want to show that if $0\ll d_1\ll d_2$ then the nested Quot scheme is irreducible. 
	We will consider another construction of the nested 
	Quot scheme. Consider the Quot scheme 
	$$\Q^{k_1}_{d_1}(E) = \Quot_{C/\C}(E,k_1,d_1)\,.$$
	Let $p_C : C \times \Q^{k_1}_{d_1}(E) \to C$ be the projection.
	Let $$p_C^*E \to \F_1 \to 0$$ be the universal quotient on $C \times \Q^{k_1}_{d_1}(E)$.
	Consider the relative Quot scheme 
	\begin{equation}\label{nested as relative-2}
	\Quot_{C \times \Q^{k_1}_{d_1}(E)/\Q^{k_1}_{d_1}(E)}(\F_1,k_2,d_2)\,.
	\end{equation}
	It is easy to see that this relative Quot scheme is the nested Quot scheme 
	$\Q^{k_1,k_2}_{d_1,d_2}(E)$.
	
	\begin{remark}\label{recall d(E,k_1)}
		Using \cite[Theorem 6.2, Theorem 6.4]{PR03} and \cite[Lemma 6.1, Theorem 6.3]{gs22},
		we get a number $d(E,k_1)$ such that
		the Quot scheme $\Q^{k_1}_{d_1}(E)$ is irreducible of dimension 
		$d_1r-k_1e+k_1(r-k_1)(1-g)$, integral, normal and a local complete intersection when $d_1 \geqslant d(E,k_1)$.
	\end{remark}
	
	Recall the quantity (expected dimension) $\ed(d_1,d_2)$ from \eqref{def expected dimension},
	\begin{align*}
	\ed(d_1,d_2) := [d_1r-k_1e+k_1(r-k_1)(1-g)] + [ d_2k_1-d_1k_2+k_2(k_1-k_2)(1-g)]\,.	
	\end{align*}
	\begin{lemma}\label{lemma lower bound of dimension of component}
		Let $d_1 \geqslant d(E,k_1)$ and $d_2$ be an integer
		such that the nested Quot scheme $\Q^{k_1,k_2}_{d_1,d_2}(E)$ is non-empty.
		Let $\mc W$ be any irreducible component of the nested Quot scheme $\Q^{k_1,k_2}_{d_1,d_2}(E)$. 
		Then 
		$$ \dim \mc W \geqslant \ed(d_1,d_2)\,.$$
	\end{lemma}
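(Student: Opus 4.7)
The plan is to realise $\Q^{k_1,k_2}_{d_1,d_2}(E)$ as the relative Quot scheme
$\Quot_{C\times \Q^{k_1}_{d_1}(E)/\Q^{k_1}_{d_1}(E)}(\F_1,k_2,d_2)$ described in \eqref{nested as relative-2}, and to apply the lower bound from Lemma \ref{lemma dimension bound of relative quot} at an arbitrary closed point of the given component $\mc W$. By Remark \ref{recall d(E,k_1)}, the base $T:=\Q^{k_1}_{d_1}(E)$ is irreducible of dimension $d_1r-k_1e+k_1(r-k_1)(1-g)$ under the hypothesis $d_1\geqslant d(E,k_1)$. The universal quotient $\F_1$ on $C\times T$ is flat over $T$, which is all that Lemma \ref{lemma dimension bound of relative quot} requires.

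Concretely, I would pick a closed point $(q_1,[\vp])\in \mc W$ corresponding to a chain $[E\twoheadrightarrow F_1 \twoheadrightarrow F_2]$ and set $S:=\ker(F_1\to F_2)$, so that $\rk(S)=k_1-k_2$ and $\deg(S)=d_1-d_2$. Lemma \ref{lemma dimension bound of relative quot} then yields
\[
   \dim_{(q_1,[\vp])}\Q^{k_1,k_2}_{d_1,d_2}(E) \;\geqslant\; \dim \Q^{k_1}_{d_1}(E)\,+\,\hom(S,F_2)-\ext^1(S,F_2)\;=\;\dim \Q^{k_1}_{d_1}(E)+\chi(S,F_2).
\]
The Euler characteristic $\chi(S,F_2)$ is computed by the Riemann-Roch formula on the curve $C$: for any coherent sheaves $A,B$ on $C$ one has $\chi(A,B)=\rk(A)\deg(B)-\rk(B)\deg(A)+\rk(A)\rk(B)(1-g)$, a formula which extends from the locally free case to arbitrary coherent sheaves by additivity in short exact sequences (reducing to locally free sheaves and skyscrapers). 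Applied to $S$ and $F_2$ this gives $\chi(S,F_2)=k_1d_2-k_2d_1+k_2(k_1-k_2)(1-g)$, which is exactly the second bracket in the definition of $\ed(d_1,d_2)$.

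Adding the two contributions produces $\ed(d_1,d_2)$, and since $\dim \mc W$ is at least the local dimension at any of its points, the desired inequality $\dim\mc W\geqslant \ed(d_1,d_2)$ follows. No serious obstacle is anticipated. The one point worth stressing is that $\F_1$ is generally not locally free, so the stronger conclusions of Theorem \ref{theorem irreducibility of relative quot} are unavailable here; however, Lemma \ref{lemma dimension bound of relative quot} was deliberately stated for arbitrary $T$-flat coherent sheaves (its proof relies on Koll\'ar's theorem \cite[Theorem 5.17, Chapter 1]{Kol96}, which does not require local freeness), and that is precisely what makes this step go through.
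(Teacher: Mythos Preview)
Your proposal is correct and follows essentially the same approach as the paper: both realise the nested Quot scheme as the relative Quot scheme over $T=\Q^{k_1}_{d_1}(E)$, apply Lemma~\ref{lemma dimension bound of relative quot} at an arbitrary closed point (using only flatness of $\F_1$, not local freeness), and compute $\hom(S,F_2)-\ext^1(S,F_2)$ via Riemann--Roch. The paper justifies the Riemann--Roch step by taking a free resolution of $S_{12}$, whereas you invoke additivity in short exact sequences; these are equivalent, and the paper likewise remarks that Theorem~\ref{theorem irreducibility of relative quot} is unavailable here since $\F_1$ need not be locally free.
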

	\begin{proof}
		Let $[E \xrightarrow{q_1} F_1, F_1 \xrightarrow{q_2} F_2]$ 
		be a closed point of the nested Quot scheme $\Q^{k_1,k_2}_{d_1,d_2}(E)$.
		Let $S_{12}$ denote the kernel of $q_2$.
		By choice of $d_1$, the Quot scheme $\Q^{k_1}_{d_1}(E)$ is irreducible.
		As the nested Quot scheme $\Q^{k_1,k_2}_{d_1,d_2}(E)$ is a relative Quot scheme, 
		we can find the dimension bound at any closed point 
		using Lemma \ref{lemma dimension bound of relative quot},
		by taking $\Q^{k_1}_{d_1}(E)$ as $T$.
		Note that we cannot apply 
		Theorem \ref{theorem irreducibility of relative quot} to 
		conclude the irreducibility of the nested Quot scheme as 
		$\F_1$ is not locally free on $C\times \Q^{k_1}_{d_1}(E)$.
		Using Lemma \ref{lemma dimension bound of relative quot}, we have
		\begin{align}\label{dimension bound nested}
			\hom(S_{12},F_2) \geqslant \dim_{(q_1,q_2)} \Q^{k_1,k_2}_{d_1,d_2}(E)- [d_1r-k_1e &+k_1(r-k_1)(1-g)] \\
			& \geqslant \hom(S_{12},F_2) - \ext^1(S_{12},F_2)\,.\nonumber
		\end{align}
		Taking a free resolution of $S_{12}$ and using Riemann-Roch formula we easily 
		see,
		\begin{equation}\label{equation hom-ext1 for any sheaf}
			\hom(S_{12},F_2)-\ext^1(S_{12},F_2) =  d_2k_1-d_1k_2+k_2(k_1-k_2)(1-g)\,.    
		\end{equation}
		So it follows that,
		\begin{align*}
			\dim_{(q_1,q_2)} \Q^{k_1,k_2}_{d_1,d_2}(E) 
			&\geqslant [d_1r-k_1e+k_1(r-k_1)(1-g)] + [ d_2k_1-d_1k_2+k_2(k_1-k_2)(1-g)] \\
			& = \ed(d_1,d_2) \,.
		\end{align*}
		Since this is true for any closed point of $\Q^{k_1,k_2}_{d_1,d_2}(E)$,
		it follows that any irreducible component of $\Q^{k_1,k_2}_{d_1,d_2}(E)$ has dimension at least $\ed(d_1,d_2)$.
	\end{proof}

	Let $U$ be the open subscheme of the nested Quot scheme $\Q^{k_1,k_2}_{d_1,d_2}(E)$ which 
	contains points $[E \xrightarrow{q_1} F_1, F_1 \xrightarrow{q_2} F_2]$ 
	such that $F_1$ is locally free.
	
	\begin{lemma}\label{lemma irreducible open set U}
		Assume $d_1 \geqslant d(E,k_1)$.
		There exists a number $\beta'(E,k_1,k_2,d_1)$ such that
		if $d_2 \geqslant \beta'(E,k_1,k_2,d_1)$
		then the open subset $U$ is irreducible,
        generically smooth of dimension $\ed(d_1,d_2)$.
	\end{lemma}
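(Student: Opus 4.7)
The plan is to realize $U$ as the relative Quot scheme over the open locus of $\Q^{k_1}_{d_1}(E)$ on which the universal quotient $\F_1$ is locally free, and then directly invoke Theorem \ref{theorem irreducibility of relative quot}.

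First I would let $T \subset \Q^{k_1}_{d_1}(E)$ be the open subset parametrizing quotients $[E \to F_1]$ with $F_1$ locally free. By Remark \ref{recall d(E,k_1)}, the assumption $d_1 \geqslant d(E,k_1)$ makes $\Q^{k_1}_{d_1}(E)$ irreducible, integral, normal, and a local complete intersection of dimension $d_1 r - k_1 e + k_1(r-k_1)(1-g)$. The Quot scheme contains a stable quotient (by Proposition \ref{theorem 6.1}, or directly from \cite{PR03}), and a stable sheaf on a smooth curve is locally free, so $T$ is non-empty; being open in an irreducible reduced scheme, $T$ is itself irreducible, reduced, and of the same dimension as $\Q^{k_1}_{d_1}(E)$.

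Next, set $\A := \F_1|_{C \times T}$. By construction $\A$ is a locally free sheaf on $C \times T$ of rank $k_1$, and each fiber $\A_t$ has degree $d_1$. Applying Theorem \ref{theorem irreducibility of relative quot} to $\A$ with quotient rank $k_2$ yields a number $\alpha(\A, k_2)$; I would define $\beta'(E,k_1,k_2,d_1) := \alpha(\A, k_2)$. For $d_2 \geqslant \beta'(E,k_1,k_2,d_1)$, parts (1)--(2) of that theorem give that the relative Quot scheme $\Quot_{C \times T/T}(\A, k_2, d_2)$ is irreducible of dimension
\[
[d_2 k_1 - d_1 k_2 + k_2(k_1 - k_2)(1-g)] + \dim T = \ed(d_1, d_2),
\]
using \eqref{def expected dimension} in the last equality. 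Since $T$ is reduced, part (4) of the same theorem gives generic smoothness.

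Finally, by the base-change property of the relative Quot scheme and the description of the nested Quot scheme in \eqref{nested as relative-2}, $\Quot_{C \times T/T}(\A, k_2, d_2)$ is canonically identified with the preimage of $T$ in $\Q^{k_1,k_2}_{d_1,d_2}(E)$, which is precisely $U$. There is no serious obstacle here: all the hard work lives in Theorem \ref{theorem irreducibility of relative quot}, and the only care required is to check that $T$ is non-empty so that $\dim T$ agrees with $\dim \Q^{k_1}_{d_1}(E)$, and to match the two ways of writing $\Q^{k_1,k_2}_{d_1,d_2}(E)$ as a relative Quot scheme via the universal property.
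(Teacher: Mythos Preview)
Your proposal is correct and follows essentially the same approach as the paper: both restrict to the open locus $T=\Q^{k_1}_{d_1}(E)^0$ where the universal quotient $\F_1$ is locally free, identify $U$ with the relative Quot scheme $\Quot_{C\times T/T}(\F_1|_{C\times T},k_2,d_2)$, and then invoke Theorem \ref{theorem irreducibility of relative quot} to obtain irreducibility, the dimension count, and generic smoothness, setting $\beta'=\alpha(\F_1|_{C\times T},k_2)$. Your version supplies a little more justification (non-emptiness of $T$, explicit local freeness of $\A$ on $C\times T$) than the paper does, but the argument is the same.
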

	\begin{proof}
		Consider the open locus $\Q^{k_1}_{d_1}(E)^0$ of all locally 
		free quotients in the Quot scheme $\Q^{k_1}_{d_1}(E)$.
		As $d_1 \geqslant d(E,k_1)$, the locus $\Q^{k_1}_{d_1}(E)^0$ is reduced and
		irreducible of dimension $[d_1r-k_1e+k_1(r-k_1)(1-g)]$
        by Remark \ref{recall d(E,k_1)}.
		We have the universal quotient sheaf $\F_1$ over $\Q^{k_1}_{d_1}(E)$.
		Then $U$ is the relative Quot scheme 
		$$U = \Quot_{C \times \Q^{k_1}_{d_1}(E)^0/\Q^{k_1}_{d_1}(E)^0}(\F_1,k_2,d_2)\,.$$
		Note that, for a point $[q:E \to F_1]$ of $\Q^{k_1}_{d_1}(E)^0$, 
		the fiber of the sheaf $\F_1$ over $[q]$ is $F_1$ which is locally free.
		Hence Theorem \ref{theorem irreducibility of relative quot} applies to show that, 
		there is a number $\alpha(\F_1,k_2)$ such that
		if $d_2 \geqslant \alpha(\F_1,k_2)$
		then $U$ is irreducible, generically smooth of dimension $\ed(d_1,d_2)$. 
		Define 
		$$ \beta' := \alpha(\F_1,k_2)\,.$$
		As $\F_1$ depends only on $E,k_1$ and $d_1$,
		it follows that $\beta'$ depends on $E,k_1,k_2$ and $d_1$.
	\end{proof}

	{For any sheaf $F$ on $C$, let $\Tor(F)$ denote the torsion subsheaf of $F$.
        For an integer $\delta \geqslant 1$, define the following locally closed 
        locus in $\Q^{k_1}_{d_1}(E)$,
	$$ Z_{\delta} := \{ [E \to F] \in \Q^{k_1}_{d_1}(E) : \len(\Tor(F)) = \delta\}\,.$$
	Give $Z_\delta$ the reduced induced subscheme structure. }
	For any degree $d_1'$, for which $\Q^{k_1}_{d_1'}(E)^0$ is 
		non-empty, define 
		\begin{equation}\label{definition omega_d1}
		\omega_{d_1'} := \dim \Q^{k_1}_{d_1'}(E)^0 - [d'_1r-k_1e+k_1(r-k_1)(1-g)]\,.    
		\end{equation}
        The number $\omega_{d'_1}$ measures the
        excess of the dimension of the locus $\Q^{k_1}_{d_1'}(E)^0$ over its expected dimension,
        when the degree $d_1'$ is small.
        This number will help us bound the dimension of the locus $Z_\delta$ as given in the following lemma.
	\begin{lemma}\label{dimension Zdelta}
		For $d_1 \geqslant d(E,k_1)$ and $\delta \geqslant 1$ such that $Z_\delta$ 
		is non-empty, we have
		$$ \dim Z_\delta \leqslant \omega_{d_1-\delta} + [d_1r-k_1e+k_1(r-k_1)(1-g)] -\delta k_1\,.$$
	\end{lemma}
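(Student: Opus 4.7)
The plan is to realize $Z_\delta$ as the image of a relative Quot scheme over $\Q^{k_1}_{d_1-\delta}(E)^0$ and then bound its dimension by the base dimension plus the maximum fiber dimension, exactly in the spirit of the argument in Proposition \ref{theoerm Zdelta}.

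Concretely, to a closed point $[E \xrightarrow{q} F]$ in $Z_\delta$ we associate the locally free quotient $q': E \to F' := F/\Tor(F)$, which is a point of $\Q^{k_1}_{d_1-\delta}(E)^0$. Letting $S_{F'}:=\ker(q')$ and $S_F:=\ker(q)$, the snake-style diagram of \eqref{Z_delta diagram} (applied with the roles of $F_1$, $F_2$ replaced by $F'$, $F$) produces a short exact sequence $0\to S_F\to S_{F'}\to \Tor(F)\to 0$, so that $[S_{F'}\twoheadrightarrow \Tor(F)]$ is a closed point of $\Quot_{C/\C}(S_{F'},0,\delta)$. Conversely, given a pair $([q':E\to F']\in \Q^{k_1}_{d_1-\delta}(E)^0,\ [S_{F'}\to \tau])$ with $\tau$ a torsion sheaf of length $\delta$, the pushout recovers a quotient $[E\to F]$ in $Z_\delta$. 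Hence, letting $\S_1'$ be the restriction of the universal kernel to $C\times \Q^{k_1}_{d_1-\delta}(E)^0$, there is a surjection (on closed points)
\[
\Quot_{C\times \Q^{k_1}_{d_1-\delta}(E)^0/\Q^{k_1}_{d_1-\delta}(E)^0}(\S_1',0,\delta) \twoheadrightarrow Z_\delta.
\]

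Now I bound dimensions. The fiber of the structure map of this relative Quot scheme over a closed point $[q':E\to F']$ is $\Quot_{C/\C}(S_{F'},0,\delta)$. Since $F'$ is locally free, $S_{F'}$ is locally free of rank $r-k_1$, and the Quot scheme of torsion quotients of length $\delta$ of a locally free sheaf of rank $r-k_1$ on the smooth curve $C$ is smooth of dimension $\delta(r-k_1)$. Combined with the definition of $\omega_{d_1-\delta}$ in \eqref{definition omega_d1}, this yields
\begin{align*}
\dim Z_\delta &\leqslant \dim \Q^{k_1}_{d_1-\delta}(E)^0 + \delta(r-k_1) \\
&= \omega_{d_1-\delta} + \bigl[(d_1-\delta)r - k_1 e + k_1(r-k_1)(1-g)\bigr] + \delta(r-k_1) \\
&= \omega_{d_1-\delta} + \bigl[d_1 r - k_1 e + k_1(r-k_1)(1-g)\bigr] - \delta k_1,
\end{align*}
which is the claimed bound.

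The main things to be careful about are: verifying that the relative Quot scheme used really has the correct base, namely the open locus $\Q^{k_1}_{d_1-\delta}(E)^0$ (which is where $\S_1'$ is locally free, so that the fiber description as $\Quot_{C/\C}(S_{F'},0,\delta)$ makes sense fiberwise), and confirming surjectivity of the map onto $Z_\delta$ on closed points via the pushout construction. Neither is a serious obstacle since both steps mirror exactly what was done in the proof of Proposition \ref{theoerm Zdelta}; the only new computation is the additive dimension accounting, which matches \eqref{definition omega_d1}.
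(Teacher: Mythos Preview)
Your proof is correct and follows essentially the same approach as the paper, which simply refers to \cite[Theorem 6.4]{PR03}; the argument you wrote out is precisely the specialization of the computation in Proposition~\ref{theoerm Zdelta} to the absolute case $T=\mathrm{pt}$, $\A_{t'}=E$. The only point the paper adds explicitly is the observation that $Z_\delta\neq\emptyset$ forces $\Q^{k_1}_{d_1-\delta}(E)^0\neq\emptyset$, so that $\omega_{d_1-\delta}$ is well-defined---this is implicit in your surjectivity statement.
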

	\begin{proof}
		{The condition $Z_\delta \neq \emptyset$ is equivalent to the condition $\Q^{k_1}_{d_1-\delta}(E)^0 \neq \emptyset$.
		Thus, $\omega_{d_1-\delta}$ is well-defined.}
		The lemma is proved in the proof of \cite[Theorem 6.4]{PR03}. 
	\end{proof}
	
	Fix $d_1 \geqslant d(E,k_1)$.
	Let $\delta >0$ such that $Z_\delta \neq \emptyset$.
	Consider the restriction of the universal quotient to $Z_\delta$,
	$$ p_C^* E \to \F_1 \to 0\,.$$
	We consider the relative Quot scheme
	$$\Quot_{C \times Z_\delta/Z_\delta}(\F_1,k_2,d_2)\,.$$
	Closed points of this scheme correspond to pairs of quotients 
	$(E \xrightarrow{q_1} F_1, F_1 \xrightarrow{q_2} F_2)$ such that
	$[q_1] \in Z_\delta$ and $F_2$ is of rank $k_2$ and degree $d_2$.
	Let $\Quot_{C \times Z_\delta/Z_\delta}(\F_1,k_2,d_2)^0$ 
	denote the open locus containing all points for which $F_2$ is locally free.
	We want to compute the dimension of this locus when $d_2\gg0$. 
	Given a point $[q_1:E\to F_1]\in Z_\delta$, 
        after going modulo the torsion in $F_1$, 
        { we get the quotient $F_1\to F_1/\Tor(F_1)=:F_1'$. }
        Assume there is a quotient  
	$\Psi:\F_1\to \F_1'$ on $C\times Z_\delta$ such that $\F_1'$ 
	is flat over $Z_\delta$ 
	and over the point $[q_1]$, the restriction of $\Psi$ 
	is the map $F_1\to F_1'$. Then
	there is a bijection between {the closed points} of 
	$\Quot_{C \times Z_\delta/Z_\delta}(\F_1,k_2,d_2)^0$ and 
	$\Quot_{C \times Z_\delta/Z_\delta}(\F_1',k_2,d_2)^0$. As $\F_1'$ 
	is locally free, we may use 
	Theorem \ref{theorem irreducibility of relative quot}
	to compute the required dimension. However, there may not exist such 
	a sheaf $\F_1'$ on $C\times Z_\delta$. {In the following Lemma we construct a map of 
	schemes $H\to \Q^{k_1}_{d_1}(E)$, 
	such that the induced map on closed points gives a bijection
	from the closed points of $H$ to the closed points of $Z_\delta$. 
	Moreover, over $C\times H$ there 
	is such a quotient $\F_1'$. }Using this we compute 
	the required dimension. 
	
	\begin{lemma}\label{lemma dimension relative Quot over Zdelta}
		Fix $d_1 \geqslant d(E,k_1)$.
		There exists a number $\nu(E,k_1,k_2,d_1,\delta)$ such that
		if $d_2 \geqslant \nu(E,k_1,k_2,d_1,\delta)$ then
		$$\dim \Quot_{C \times Z_\delta/Z_\delta}(\F_1,k_2,d_2)^0 = \dim Z_\delta + [d_2k_1-k_2(d_1-\delta) + k_2(k_1-k_2)(1-g)]\,.$$
	\end{lemma}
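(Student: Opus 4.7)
My plan is to replace $Z_\delta$ by an auxiliary scheme $H$ carrying a natural locally free quotient of the universal family, so that Theorem \ref{theorem irreducibility of relative quot} applies. Assuming $Z_\delta \neq \emptyset$, so that $\Q^{k_1}_{d_1-\delta}(E)^0 \neq \emptyset$, let $\S'_1$ denote the universal kernel on $C \times \Q^{k_1}_{d_1-\delta}(E)^0$ and set
\[
H := \Quot_{C \times \Q^{k_1}_{d_1-\delta}(E)^0/\Q^{k_1}_{d_1-\delta}(E)^0}(\S'_1, 0, \delta)\,.
\]
A closed point of $H$ is a pair $([E \to F_1'], [S_1' \to \tau])$ with $F_1'$ locally free of rank $k_1$ and degree $d_1 - \delta$, $S_1' = \ker(E \to F_1')$, and $\tau$ a torsion quotient of length $\delta$. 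The pushout construction gives a morphism $\phi : H \to Z_\delta$ sending this pair to $[E \to E/\ker(S_1' \to \tau)]$, with inverse on closed points $[E \to F_1] \mapsto ([E \to F_1/\Tor(F_1)], [\ker(E \to F_1/\Tor(F_1)) \to \Tor(F_1)])$. Since $\S'_1$ has rank $r - k_1$ on each fiber, and Quot schemes of length-$\delta$ torsion quotients of locally free sheaves on a smooth curve are smooth of dimension $(\textrm{rank})\cdot \delta$ (the $\ext^1$-obstruction vanishes), the morphism $H \to \Q^{k_1}_{d_1-\delta}(E)^0$ is smooth of relative dimension $\delta(r - k_1)$. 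Hence $\dim H = \dim \Q^{k_1}_{d_1-\delta}(E)^0 + \delta(r - k_1)$. Combined with Lemma \ref{dimension Zdelta} and the bijectivity of $\phi$ on closed points, this yields $\dim Z_\delta = \dim H$.

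Next, over $C \times H$ I would assemble the universal data into an exact sequence $0 \to \mc T \to \F_1 \to \F_1' \to 0$, where $\F_1'$ is the pullback of the universal locally free quotient (hence locally free on $C \times H$), $\mc T$ is the universal torsion quotient, and $\F_1 := p_C^*E/\ker(\pi^*\S'_1 \to \mc T)$, with $\pi$ the projection. Base changing along $\phi$, which is bijective on closed points, preserves dimensions of the $\Quot^0$ loci, so $\dim \Quot_{C \times Z_\delta/Z_\delta}(\F_1, k_2, d_2)^0 = \dim \Quot_{C \times H/H}(\F_1, k_2, d_2)^0$. The surjection $\F_1 \to \F_1'$ induces a closed immersion $\Quot(\F_1', k_2, d_2) \hookrightarrow \Quot(\F_1, k_2, d_2)$, and on closed points any locally free quotient of $F_1$ kills the torsion $\tau$ and therefore factors through $F_1'$, so this closed immersion is bijective on the $^0$ loci. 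Hence the two $\Quot^0$ schemes over $H$ have equal dimension.

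Finally, since $\F_1'$ is locally free on $C \times H$, I would apply Theorem \ref{theorem irreducibility of relative quot} to each irreducible component of $H_{\mathrm{red}}$: for $d_2$ sufficiently large, $\Quot_{C \times H/H}(\F_1', k_2, d_2)^0$ has dimension $\dim H + [d_2 k_1 - k_2(d_1 - \delta) + k_2(k_1-k_2)(1-g)]$ on each component. We then define $\nu(E, k_1, k_2, d_1, \delta)$ as the maximum of the $\alpha$-thresholds over these finitely many components, and combining with $\dim H = \dim Z_\delta$ yields the claim. The main technical obstacle is the scheme-theoretic matching between $\Quot(\F_1, k_2, d_2)^0$ and $\Quot(\F_1', k_2, d_2)^0$ over $H$; however for the dimension statement it suffices to have agreement on closed points, which follows from the pointwise fact that locally free quotients annihilate the torsion $\tau$.
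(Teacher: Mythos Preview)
Your proposal is correct and follows essentially the same route as the paper: construct the auxiliary scheme $H$ as a relative Quot scheme of length-$\delta$ torsion quotients of the universal kernel over $\Q^{k_1}_{d_1-\delta}(E)^0$, map it to $Z_\delta$ bijectively on closed points via the pushout, base change the relative Quot scheme, observe that locally free quotients of $\F_1$ factor through the torsion-free part $\F_1'$, and then apply Theorem \ref{theorem irreducibility of relative quot} to the locally free sheaf $\F_1'$. Your remark that the theorem must be applied to each irreducible component of $H_{\mathrm{red}}$ (since $H$ need not be irreducible when $d_1-\delta$ is small) is a detail the paper glosses over; note also that the reference to Lemma \ref{dimension Zdelta} is unnecessary, since bijectivity of $\phi$ on closed points alone already gives $\dim H = \dim Z_\delta$.
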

	\begin{proof}
		Let $X$ denote the locus of locally free quotients in 
		$\Q^{k_1}_{d_1-\delta}(E)$, that is, $X:= \Q^{k_1}_{d_1-\delta}(E)^0\,.$
		Let $\rho : C \times X \to C$ be the projection map.
		We have the universal short exact sequence on $C \times X$,
		$$ 0 \to \S'_1 \to \rho^*E \to \G'_1 \to 0\,,$$
        with $\G'_1$ flat over $X$.
		Note that for any $x \in X$, the fiber 
		$\G'_1\vert_x$ is a locally free sheaf on $C$.
		Let $H$ denote the following relative Quot scheme
		$$ H:= \Quot_{C \times X/X}(\S'_1,0,\delta)\,.$$
		The closed points of $H$ correspond to pairs of quotients
		$(E \xrightarrow{q_1} G'_1, \ker(q_1) \xrightarrow{q_2} \tau_1)$
		such that $G'_1$ is locally free of rank $k_1$, degree 
		$d_1-\delta$ and $\tau_1$ is a torsion sheaf of length $\delta$.
		Let $\sigma : (C \times X)\times_X H \to C \times X$ denote the projection.
		There is a universal quotient on $C \times H$,
		$$ \sigma^*\S'_1 \to \Tau \to 0$$
        with $\Tau$ flat over $H$.
		From this, we get a quotient $\sigma^*\rho^*E \to \G_1$ 
		using the following push out diagram
		\begin{equation}\label{eq G_1 G_1'}
			\begin{tikzcd}
				0 \arrow{r} & \sigma^*\S'_1 \arrow{d} \arrow{r} &
				\sigma^*\rho^*E  \arrow{d} \arrow{r} & \sigma^*\G'_1  \ar[d,-,double equal sign distance,double] \arrow{r} & 0 \phantom{\,.}\\
				0 \arrow{r} & \Tau \arrow{r} &
				\G_1  \arrow{r} & \sigma^*\G'_1  \arrow{r} & 0\,.\\
			\end{tikzcd}
		\end{equation}
		As $\sigma^*\G'_1$ and $\Tau$ both are flat over $H$,  so is $\G_1$.
		So the quotient $\sigma^*\rho^*E \to \G_1 \to 0$ on $C \times H$ 
		gives a map of schemes
		$$ f : H \to \Q^{k_1}_{d_1}(E) \quad \textrm{ such that } \quad 
		f^*(\F_1) = \G_1 \,.$$
		It can be checked easily that $f$ maps {the closed points of $H$ 
		bijectively onto the closed points of $Z_\delta$.
		We shall abuse notation and denote the composite 
		map $H_{\rm red} \hookrightarrow H\stackrel{f}{\to}\Q^{k_1}_{d_1}(E)$
		by $f$. As $Z_\delta$ has the reduced induced scheme structure,
		it follows that we have a map {of schemes }
		$$f: H_{\rm red} \to Z_\delta\,,$$
		which is bijective on closed points.
		We shall also abuse notation and denote objects on $H_{\rm red}$
		using the same notation as that over $H$. 
		From the base change property of Quot schemes we have the following Cartesian diagram
		$$
		\xymatrix{
			\Quot_{C \times H_{\rm red}/H_{\rm red}}(\G_1,k_2,d_2) \ar[r]^{\tilde f}\ar[d] & 
			\Quot_{C \times Z_\delta/Z_\delta}(\F_1,k_2,d_2) \ar[d]\\
			H_{\rm red} \ar[r]^f & Z_\delta
		}
		$$ 
		Using the base change property of Quot schemes, it is easily checked that 
		$f$ being bijective on closed points, 
		implies that $\tilde f$ is also bijective on closed points.
		Consequently, if we restrict on the locus of locally free quotients then we have the following map which is bijection on closed points :
		$$ {\tilde f}^0 : \Quot_{C \times H_{\rm red}/H_{\rm red}}(\G_1,k_2,d_2)^0
		\longrightarrow \Quot_{C \times Z_\delta/Z_\delta}(\F_1,k_2,d_2)^0
		\,.$$
		Hence, to prove the lemma, it is enough to show the following,
		\begin{equation*}
			\dim \Quot_{C \times H_{\rm red}/H_{\rm red}}(\G_1,k_2,d_2)^0 = 
			\dim H_{\rm red} + [d_2k_1-k_2(d_1-\delta) + k_2(k_1-k_2)(1-g)]\,.
		\end{equation*}
		However, note that 
		$$\dim \Quot_{C \times H_{\rm red}/H_{\rm red}}(\G_1,k_2,d_2)^0
			=\dim \Quot_{C \times H/H}(\G_1,k_2,d_2)^0\,.$$
		Hence, to prove the lemma, it is enough to show the following,
		\begin{equation}\label{equation dimension relative Quot over H}
			\dim \Quot_{C \times H/H}(\G_1,k_2,d_2)^0 = 
			\dim H + [d_2k_1-k_2(d_1-\delta) + k_2(k_1-k_2)(1-g)]\,.
		\end{equation}}
		
		Recall that we have the following quotient on $C \times H$,
		$$ \G_1 \to \sigma^*\G'_1 \to 0\,.$$
		This gives a closed immersion of Quot schemes
		$$g: \Quot_{C \times H/H}(\sigma^*\G'_1,k_2,d_2) \to \Quot_{C \times H/H}(\G_1,k_2,d_2)\,.$$
		Restricting this map on the locus of locally free quotients, we have a closed immersion
		$$g^0: \Quot_{C \times H/H}(\sigma^*\G'_1,k_2,d_2)^0 \to \Quot_{C \times H/H}(\G_1,k_2,d_2)^0\,.$$
			We claim that $g^0$ is bijective on closed points.
			We need only prove that $g^0$ is surjective on closed points.
			Let $[h,\vp:(\G_1)_h \to G_2]$ be a closed point of $\Quot_{C \times H/H}(\G_1,k_2,d_2)^0$
			where $h \in H$ and $G_2$ is locally free.
			Then we have the short exact sequence (obtained from the bottom row of \eqref{eq G_1 G_1'})
			$$ 0 \to \Tor((\G_1)_h) \to (\G_1)_h \to (\sigma^*\G'_1)_h \to 0\,.$$
			As $G_2$ is locally free, the quotient $\vp$ factors through $(\sigma^*\G'_1)_h$, producing 
			a locally free quotient $\vp': (\sigma^*\G'_1)_h \to G_2$ of rank $k_2$ and degree $d_2$.
			So $[h,\vp']$ is a point in $\Quot_{C \times H/H}(\sigma^*\G'_1,k_2,d_2)^0$.
			Clearly the map $g^0$ sends the point $[h,\vp']$ to $[h,\vp]$.
			This shows that $g^0$ is surjective and hence bijective on closed points.

		So it is enough to find dimension of $\Quot_{C \times H/H}(\sigma^*\G'_1,k_2,d_2)^0$.
		Note that for any closed point $h \in H$,
		the fiber $(\sigma^*\G'_1)_h$ is a locally free sheaf on $C$.
		So using Theorem \ref{theorem irreducibility of relative quot} we get a 
		number $\alpha(\sigma^*\G'_1,k_2)$ such that 
		if $d_2 \geqslant \alpha(\sigma^*\G'_1,k_2)$ then
		the relative Quot scheme $\Quot_{C \times H/H}(\sigma^*\G'_1,k_2,d_2)^0$ has dimension 
		$$ \dim H + d_2k_1-(d_1-\delta)k_2+k_2(k_1-k_2)(1-g)\,.$$
		We define $\nu := \alpha(\sigma^*\G'_1,k_2)$.
		As $\sigma^*\G'_1$ depends only on $E,k_1,d_1$ and $\delta$,
		the number $\nu$ depends on $E, k_1,k_2,d_1$ and $\delta$.
		This proves that for $d_2 \geqslant \nu$, we have
		\eqref{equation dimension relative Quot over H}.
		From this the lemma follows.
	\end{proof}

	Let us define the following subsets of the nested Quot scheme $\Q^{k_1,k_2}_{d_1,d_2}(E)$.
	For any $\delta>0$ and $\mu \geqslant 0$, define
	$$ Y_{\delta, \mu} := \{ [E \to F_1 \to F_2] \in \Q^{k_1,k_2}_{d_1,d_2}(E) : 
	\len(\Tor(F_1)) = \delta \textrm{ and } \len(\Tor(F_2)) = \mu\} \,.$$
    Recall that $U$ is the open subscheme of the nested Quot scheme $\Q^{k_1,k_2}_{d_1,d_2}(E)$ which contains points $[E \xrightarrow{q_1} F_1, F_1 \xrightarrow{q_2} F_2]$ 
	such that $F_1$ is locally free.
	{Then we have a stratification of $\Q^{k_1,k_2}_{d_1,d_2}(E)$ into locally closed subsets,}
	\begin{equation}\label{equation nested quot disjoint union}
		\Q^{k_1,k_2}_{d_1,d_2}(E) = \left( \bigsqcup_{{\delta \geqslant 1, \mu \geqslant 0}}
		Y_{\delta, \mu} \right) \bigsqcup U\,.    
	\end{equation}
	To show the irreducibility of the nested Quot scheme 
	$\Q^{k_1,k_2}_{d_1,d_2}(E)$, by Lemma \ref{lemma irreducible open set U}, 
	it is enough to show that the points of any 
	$Y_{\delta,\mu}$ cannot be general in any component of 
	$\Q^{k_1,k_2}_{d_1,d_2}(E)$. 
    In order to show this, by 
    Lemma \ref{lemma lower bound of dimension of component} 
    it suffices to prove that the dimension of the locus 
    $Y_{\delta,\mu}$ is less than the expected dimension of 
    $\Q^{k_1,k_2}_{d_1,d_2}(E)$. We will calculate an upper 
    bound for the dimension of $Y_{\delta, \mu}$.

	Fix $\delta >0$ and $\mu \geqslant 0$.
	Let $[q_1:E \to F_1, q_2: F_1 \to F_2]$ be a closed point in the locus $Y_{\delta,\mu}$.
	Let $\tau_2 \subset F_2$ denote the torsion subsheaf and $F_2'$ be the 
	locally free quotient so that we have the short exact sequence
	$$0 \to \tau_2 \to F_2 \to F'_2 \to 0\,.$$
	Let $q'_2:F_1 \to F_2\to F'_2$ denote the composite quotient
	and let $S_{12}$ denote the kernel of $q'_2$.
	Using a similar diagram as in equation \eqref{Z_delta diagram}, 
	it is easy to see that $\tau_2$ is a quotient of the sheaf $S_{12}$.
	So the point $[q_1,q_2]$ of $Y_{\delta,\mu}$ gives rise to three quotients
	\begin{equation}\label{equation triplets}
		[E \xrightarrow{q_1} F_1] \in Z_\delta, \quad 
		[F_1 \xrightarrow{q'_2} F'_2] \in \Quot(F_1,k_2,d_2-\mu), \quad
		[S_{12} \xrightarrow{\sigma} \tau_2] \in \Quot(S_{12},0,\mu)\,.
	\end{equation}
	Conversely, given any three quotients like above,
	we can get back $q_2$, and so also the point 
	$[q_1:E \to F_1, q_2: F_1 \to F_2]$, as the pushout of $S_{12}\hookrightarrow F_1$
	and $\sigma$. The reader may easily check that the 
	pushout diagram is the following
	\begin{equation}\label{pushout sheaf}
		\begin{tikzcd}
			0 \arrow{r} & S_{12} \arrow{d}{\sigma} \arrow{r} &
			F_1 \arrow{d}{q_2} \arrow{r}{q_2'} & 
			F_2'  \ar[d,-,double equal sign distance,double] \arrow{r} & 
			0 \phantom{\,.}\\
			0 \arrow{r} & \tau_2 \ar[r,hookrightarrow] &
			F_2  \arrow{r} & F_2'  \arrow{r} & 0\,.
		\end{tikzcd}
	\end{equation}
	This one-to-one correspondence shows that the 
	closed points of $Y_{\delta,\mu}$ are in 
	bijection with the closed points of a scheme, 
	which we call $B$, which parametrizes such 
	triplets of quotients.
	We will construct the scheme $B$ and a map
	$g : B \to Y_{\delta,\mu}$ which will give the correspondence on closed points.
	
	Consider the subset $Z_\delta$ of $\Q^{k_1}_{d_1}(E)$ and the 
	restriction of universal quotient to $C \times Z_\delta$,
	$$ p_C^* E \to \F_1 \to 0\,.$$
	We consider the relative Quot scheme
	$$\Quot_{C \times Z_\delta/Z_\delta}(\F_1,k_2, d_2-\mu)\,.$$
	Let $A$ denote the open locus of locally free quotients,
	\begin{equation}\label{definition A}
		A := \Quot_{C \times Z_\delta/Z_\delta}(\F_1, k_2, d_2-\mu)^0\,.
	\end{equation}
	A closed point of $A$ corresponds to a pair of quotients $(q_1:E \to F_1, q_2':F_1 \to F_2')$ 
	where $q_1 \in Z_\delta$ and $q_2' \in \Quot(F_1,k_2,d_2-\mu)^0$.
	Let $p: (C \times Z_\delta)\times_{Z_\delta} A \to C \times Z_{\delta}$ denote the projection map.
	Using the natural isomorphism $(C \times Z_\delta)\times_{Z_\delta} A \cong C \times A$, we have the following universal quotient on $C \times A$,
	$$ p^*\F_1 \to \F_2' \to 0\,.$$
	Let $\S_{12}$ denote the kernel of this surjection.
	We consider the relative Quot scheme 
	\begin{equation}\label{definition B}
		B:= \Quot_{C \times A/A}(\S_{12}, 0,\mu)\,.
	\end{equation}
	The closed points of $B$ correspond to 3-tuple of quotients
	$$(q_1:E \to F_1, \quad q_2': F_1 \to F_2', \quad \sigma: S_{12} \to \tau_2)\,,$$
	where $(q_1,q_2') \in A$ and $S_{12} = \ker(q_2')$.
	Let $\pi: (C \times A)\times_A B \to C \times A$ denote the projection map.
	We have the following universal quotient over $(C \times A)\times_A B \cong C \times B$,
	$$\pi^*\S_{12} \to \Tau_2 \to 0\,.$$
	From this we get a quotient 
	$\pi^*p^*\F_1 \longrightarrow \F_2 $
	using the following push out diagram
	\begin{equation}
		\begin{tikzcd}
			0 \arrow{r} & \pi^*\S_{12} \arrow{d} \arrow{r} &
			\pi^*p^*\F_1  \arrow{d} \arrow{r} & \pi^*\F_2'  \ar[d,-,double equal sign distance,double] \arrow{r} & 0\phantom{\,.}\\
			0 \arrow{r} & \Tau_2 \arrow{r} &
			\F_2  \arrow{r} & \pi^*\F_2'  \arrow{r} & 0\,.\\
		\end{tikzcd}
	\end{equation}
	It is easy to check that for a closed point $b \in B$,
    the fiber $\pi^*p^*\F_1 \vert_{C \times \{b\}}$ is a sheaf on $C$ of rank $k_1$, degree $d_1$
    and the fiber $\F_2 \vert_{C \times \{b\}}$ is a sheaf on $C$ of rank $k_2$ and degree $d_2$.
    Due to the universal property of the nested Quot scheme $\Q^{k_1,k_2}_{d_1,d_2}(E)$, 
    the following pair of quotients on $C \times B$,
	$$ p_C^*E \to  \pi^*p^*\F_1 \to 0, \quad
	\pi^*p^*\F_1 \to \F_2 \to 0$$
	induce a map to the nested Quot scheme
	$$ g : B \to \Q^{k_1,k_2}_{d_1,d_2}(E)\,.$$
	Clearly, the image of $g$ is exactly $Y_{\delta,\mu}$ and the 
	description of $g$ on closed points is as described in  \eqref{equation triplets} and \eqref{pushout sheaf}.
	In particular, the map $g: B \to Y_{\delta,\mu}$ is a bijection on the closed points.
	So we have
	\begin{equation}\label{dimension Ydeltamu}
		 \dim B=\dim Y_{\delta,\mu} \,.
	\end{equation}
	From the construction of $B$ we have that 
	\begin{equation}\label{equation dimension B}
		\dim B \leqslant \dim A + \max_{[q_1,q'_2] \in A} \dim (\Quot(\ker(q'_2),0,\mu))\,.
	\end{equation}
	The dimension of $A$ can be calculated using 
	Lemma \ref{lemma dimension relative Quot over Zdelta}.
	Recall that $q_2':F_1\to F_2'$ is such that 
	$F_2'$ is locally free. It follows that $\ker(q'_2)=S\oplus \Tor(F_1)$, 
	where $S$ is a locally free sheaf of rank $k_1-k_2$.
	Let $c\in C$ be a closed point with local ring $\mc O_{C,c}$
	and maximal ideal $\mf m$.
	As $F_1=F_1'\oplus \Tor(F_1)$ and $E$ surjects onto $F_1$, tensoring 
	with $\mc O_{C,c}/\mf m$, it follows that 
	$\Tor(F_1)\otimes_{\mc O_{C,c}}\mc O_{C,c}/\mf m$ is a quotient of 
	$\mc O_{C,c}^{\oplus r-k_1}$ for every closed point $c$ in the support of 
	$\Tor(F_1)$. It follows that $\Tor(F_1)$ is the quotient of $\mc O_C^{\oplus r-k_1}$.
	It follows that $\ker(q_2')$ is the quotient of a locally free 
	sheaf of rank $(k_1-k_2)+(r-k_1)=r-k_2$. 
    Let $\mc E$ be a locally free sheaf 
	of rank $r-k_2$ such that there is a surjection $\mc E\to \ker(q_2')$. This shows that  
	$$\dim(\Quot(\ker(q'_2),0,\mu))\leqslant \dim(\Quot(\mc E,0,\mu)) =(r-k_2)\mu\,.$$
	Continuing the computation from \eqref{equation dimension B}, we get 
	\begin{align}\label{equation dimension Ydeltamu}
		\dim Y_{\delta,\mu} = \dim B & \leqslant \dim A + \max_{[q_1,q'_2] \in A} \dim (\Quot(\ker(q'_2),0,\mu))\\
		& \leqslant \dim \Quot_{C \times Z_\delta/Z_\delta}(\F_1,k_2,d_2-\mu)^0 +(r-k_2)\mu \nonumber\,.
	\end{align}
	Recall the number $\ed(d_1,d_2)$ from \eqref{def expected dimension}.
	\begin{lemma}\label{dimension Ydeltamu < expd}
		Assume that $k_1+k_2 > r$.
		There exists a number $\gamma(E,k_1,k_2)$, 
		such that for any $d_1 \geqslant \gamma(E,k_1,k_2)$, 
		there exists a number $\beta''(E,k_1,k_2,d_1)$ for which the following happens. 
		If $d_2 \geqslant \beta''(E,k_1,k_2,d_1)$ then 
		$\dim Y_{\delta, \mu} <\ed(d_1,d_2)$ for any $\delta >0$ and $\mu \geqslant 0$.
	\end{lemma}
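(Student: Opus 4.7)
The plan is to bound $\dim Y_{\delta,\mu}$ by chaining \eqref{equation dimension Ydeltamu} with Lemma \ref{lemma dimension relative Quot over Zdelta} and Lemma \ref{dimension Zdelta}, and then to exploit the hypothesis $k_1+k_2>r$ to force the excess over $\ed(d_1,d_2)$ to be negative. Since Lemma \ref{lemma dimension relative Quot over Zdelta} applies only when $d_2-\mu\geqslant \nu(E,k_1,k_2,d_1,\delta)$, I would split the argument into two cases according to the size of $d_2-\mu$. For $d_1$ fixed, the set of $\delta\geqslant 1$ with $Z_\delta\neq\emptyset$ is finite (it is contained in $\{1,\ldots,d_1-m_{\min}(E,k_1)\}$), so the constant $N:=\max_\delta \nu(E,k_1,k_2,d_1,\delta)$ is well-defined and finite.

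\textbf{Case 1:} $d_2-\mu\geqslant N$. Lemma \ref{lemma dimension relative Quot over Zdelta} gives $\dim A$ exactly; substituting into \eqref{equation dimension Ydeltamu} and using Lemma \ref{dimension Zdelta}, a direct computation yields
\[
\dim Y_{\delta,\mu} - \ed(d_1,d_2) \leqslant \omega_{d_1-\delta} - \delta(k_1-k_2) + \mu(r-k_1-k_2).
\]
The last term is non-positive by hypothesis, so the claim reduces to $\omega_{d_1-\delta}<\delta(k_1-k_2)$. When $d_1-\delta\geqslant d(E,k_1)$, Remark \ref{recall d(E,k_1)} gives $\omega_{d_1-\delta}=0$ and the inequality is immediate. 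Otherwise $\delta>d_1-d(E,k_1)$; the finite set $\{d':m_{\min}(E,k_1)\leqslant d'<d(E,k_1)\}$ yields a uniform bound $\Omega(E,k_1):=\max_{d'}\omega_{d'}$, and choosing $\gamma(E,k_1,k_2)$ so that $d_1-d(E,k_1)\geqslant \Omega/(k_1-k_2)$ forces $\delta(k_1-k_2)>\Omega\geqslant \omega_{d_1-\delta}$.

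\textbf{Case 2:} $d_2-\mu<N$. Here Lemma \ref{lemma dimension relative Quot over Zdelta} is unavailable, and I would instead use that $F_1$ is a quotient of $E$, so that $\Quot(F_1,k_2,d_2-\mu)$ embeds as a closed subscheme of $\Quot(E,k_2,d_2-\mu)$; as $d_2-\mu$ ranges over the bounded set $\{0,\ldots,N-1\}$, the dimension of the latter is bounded uniformly by a constant depending only on $E,k_2,d_1$. Combined with the bound on $\dim Z_\delta$ from Lemma \ref{dimension Zdelta} and \eqref{equation dimension Ydeltamu}, this gives $\dim Y_{\delta,\mu}\leqslant C+(r-k_2)\mu$ for some $C=C(E,k_1,k_2,d_1)$. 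Since $\mu\leqslant d_2$ and $\ed(d_1,d_2)=k_1d_2+O_{d_1}(1)$ as a function of $d_2$, the excess $\ed(d_1,d_2)-\dim Y_{\delta,\mu}$ is bounded below by $d_2(k_1+k_2-r)-C'$ for a constant $C'=C'(E,k_1,k_2,d_1)$; since $k_1+k_2>r$, this becomes positive once $d_2\geqslant \beta''(E,k_1,k_2,d_1)$ for $\beta''$ chosen large enough.

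The hard part is Case 2, where the sharp dimension formula of Lemma \ref{lemma dimension relative Quot over Zdelta} is unavailable and one must rely on the coarser closed immersion $\Quot(F_1,k_2,d_2-\mu)\hookrightarrow \Quot(E,k_2,d_2-\mu)$. The hypothesis $k_1+k_2>r$, rewritten as $k_1>r-k_2$, is precisely what ensures that the leading $k_1d_2$ term in $\ed(d_1,d_2)$ dominates the crude torsion contribution $(r-k_2)\mu$, which is of comparable order since $\mu$ can be as large as $d_2$.
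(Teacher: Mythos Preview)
Your argument is correct and arrives at the same key inequality as the paper, namely
\[
\dim Y_{\delta,\mu}-\ed(d_1,d_2)\leqslant \omega_{d_1-\delta}-(k_1-k_2)\delta-(k_1+k_2-r)\mu + (\text{excess in the $d_2$-direction}),
\]
but the two proofs organise the ``excess in the $d_2$-direction'' differently. The paper introduces a single quantity
\[
\eta_{d_2',\delta}:=\dim \Quot_{C\times Z_\delta/Z_\delta}(\F_1,k_2,d_2')^0-\dim Z_\delta-[d_2'k_1-k_2(d_1-\delta)+k_2(k_1-k_2)(1-g)]
\]
that measures this excess, observes that Lemma \ref{lemma dimension relative Quot over Zdelta} makes $\eta_{d_2',\delta}=0$ for $d_2'\geqslant \nu$, and then defines $\beta''$ so that $\eta_{d_2-\mu,\delta}\leqslant (k_1+k_2-r)\mu$ uniformly, handling all $(\delta,\mu)$ at once with no case split. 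Your Case~1 is exactly the paper's computation specialised to $\eta_{d_2-\mu,\delta}=0$; in Case~2 you bypass $\eta$ altogether by the coarser closed immersion $\Quot(F_1,k_2,d_2-\mu)\hookrightarrow \Quot(E,k_2,d_2-\mu)$, which is legitimate since the range of $d_2-\mu$ is finite. The paper's packaging via $\eta$ is a bit cleaner and makes the parallel with the treatment of $\omega_{d_1'}$ transparent; your route avoids defining $\eta$ but at the cost of the case split and a crude bound that works only because you do not need sharpness there.

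Two small imprecisions that do not affect validity: (i) the lower bound on $d_2-\mu$ is $m(E,k_2)$ (or $m_q(E,k_2)$), which may be negative, so the range in Case~2 is $\{m(E,k_2),\ldots,N-1\}$ rather than $\{0,\ldots,N-1\}$; (ii) correspondingly $\mu\leqslant d_2-m(E,k_2)$ rather than $\mu\leqslant d_2$. Both corrections only shift the constants $C,C'$ and the final inequality $(k_1+k_2-r)d_2>C'$ still holds for $d_2$ large.
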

	\begin{proof}
		First let us define $\gamma(E,k_1,k_2)$.
        For any degree $d_1'$, for which $\Q^{k_1}_{d_1'}(E)^0$ is 
		non-empty, recall the number $\omega_{d_1'}$
        from \eqref{definition omega_d1}, 
		$$\omega_{d_1'} := \dim \Q^{k_1}_{d_1'}(E)^0 - [d'_1r-k_1e+k_1(r-k_1)(1-g)]\,.$$
		If $d'_1 < m(E,k_1)$ then
		the Quot scheme $\Q^{k_1}_{d_1'}(E)$ is empty.
		If $d_1' \geqslant d(E,k_1)$, then $\omega_{d_1'} = 0$
        by Remark \ref{recall d(E,k_1)}.
		So the set $P_1 := \{ d_1' :\Q^{k_1}_{d_1'}(E)^0\neq\emptyset\,,\,\,\omega_{d_1'} > 0\}$ 
		is finite. Define 
		\begin{equation}\label{def M 2}
		M := \max_{d_1' \in P_1} \left\{\frac{\omega_{d_1'}}{k_1-k_2} + d_1' \right\}	
		\end{equation}
		and 
		$$\gamma(E,k_1,k_2) : = \max\{[M]+1,d(E,k_1)\}\,.$$
		We choose and fix $d_1 \geqslant \gamma(E,k_1,k_2)$.
		Then we claim 
		\begin{equation}\label{d1 satisfies}
			\begin{aligned}
				\omega_{d_1'} - (k_1-k_2)(d_1 -d_1') < 0
				\quad \text{ for any } d_1' & < d_1   
				\text{ such that }\Q^{k_1}_{d_1'}(E)^0 \neq \emptyset \,.
			\end{aligned}
		\end{equation}
		Indeed, let $d_1'<d_1$ be such that $\Q^{k_1}_{d_1'}(E)^0 \neq \emptyset$.
		If $\omega_{d_1'}\leqslant 0$ then \eqref{d1 satisfies} is clear. 
		If $\omega_{d_1'}>0$ then $d_1'\in P_1$. Now \eqref{d1 satisfies} follows
		as $d_1>M$. This proves the claim. 
		
		Next we define $\beta''(E,k_1,k_2,d_1)$.
		Let $\delta>0$ be such that $Z_{\delta} \neq \emptyset$.
		Consider the restriction of universal quotient to $C \times Z_\delta$,
		$$ p_C^* E \to \F_1 \to 0\,.$$
		For any $d_2'$, for which $\Quot_{C \times Z_\delta/Z_\delta}(\F_1,k_2,d_2')^0$ is non-empty, define
		$$ \eta_{d_2',\delta} := \dim \Quot_{C \times Z_\delta/Z_\delta}(\F_1,k_2,d_2')^0 - \dim Z_\delta - [d_2'k_1-k_2(d_1-\delta) + k_2(k_1-k_2)(1-g)]\,.$$
        This number will be used to bound the dimension of the locus $Y_{\delta,\mu}$ with the help of \eqref{equation dimension Ydeltamu}.
		By Remark \ref{define m_min and m_max}, if $d'_2 < m_{\min}(\F_1,k_2)$ then 
		the relative Quot scheme $\Quot_{C \times Z_\delta/Z_\delta}(\F_1,k_2,d_2')^0$ is empty.
		By Lemma \ref{lemma dimension relative Quot over Zdelta},
		there is a number $\nu(E,k_1,k_2,d_1,\delta)$ such that
		if $d_2' \geqslant \nu$ then $\eta_{d_2',\delta} =0$.
		So the set $P_2^\delta := \{d_2': \,\eta_{d_2',\delta} > 0\}$ is finite.
		Define 
		$$ N_\delta := \max_{d_2' \in P_2^\delta}\left\{
		\frac{\eta_{d_2',\delta}}{k_1+k_2-r}+d_2'\right\}$$
		and 
		$$ 
		\beta''(E,k_1,k_2,d_1,\delta) : = \max\{[N_\delta]+1,\nu(E,k_1,k_2,d_1,\delta)\}\,.$$
		Observe that once we fix $d_1$, if $Z_\delta$ is non-empty, then $E$
		would have a quotient of rank $k_1$ and degree $d_1-\delta$. Since 
		$d_1-\delta\geqslant m(E,k_1)$,  see Definition \ref{def m(G,k)},
		$\delta$ can be at most $d_1-m(E,k_1)$.
        Hence, there will be only finitely many $\delta$ for 
		which $Z_\delta \neq \emptyset$.
		We define 
		$$\beta''(E,k_1,k_2,d_1) : = \max_{\delta: Z_\delta \neq \emptyset}\{\beta''(E,k_1,k_2,d_1,\delta)\}\,.$$
		Assume $d_2 \geqslant \beta''(E,k_1,k_2,d_1)$.
		We claim that
		\begin{align}\label{d2 satisfies}
			\nonumber
			\eta_{d_2',\delta} - (k_1+k_2-r)(d_2-d_2') \leqslant 0
			\quad &\text{ for any $\delta >0$ and }d_2' \leqslant d_2\\
			& \text{ such that }\Quot_{C \times Z_\delta/Z_\delta}(\F_1,k_2,d_2')^0 \neq \emptyset \,. \\ 
			\nonumber
		\end{align}
		To see the claim, fix $\delta>0$. Let us assume that $d_2'\leqslant d_2$ and 
		$\Quot_{C \times Z_\delta/Z_\delta}(\F_1,k_2,d_2')^0\neq \emptyset$. As $k_1+k_2>r$,
		if $\eta_{d_2',\delta}\leqslant 0$ then the claim is clear. If 
		$\eta_{d_2',\delta}>0$ then $d_2'\in P_2^\delta$. In this case, the claim
		follows as $d_2>N_\delta$. 
		
		Fix $\delta >0$ and $\mu \geqslant 0$.
		Consider the subset $Y_{\delta,\mu}$ of $\Q^{k_1,k_2}_{d_1,d_2}(E)$.
		Using \eqref{equation dimension Ydeltamu}, we have
		\begin{align*}
			\dim Y_{\delta,\mu}
			& \leqslant \dim \Quot_{C \times Z_\delta/Z_\delta}(\F_1,k_2,d_2-\mu)^0 + (r-k_2)\mu \\
			& = \eta_{d_2-\mu,\delta} + \dim Z_\delta + [(d_2-\mu)k_1-k_2(d_1-\delta) + k_2(k_1-k_2)(1-g)] \\
			& \quad\quad + (r-k_2)\mu
		\end{align*}
		Using Lemma \ref{dimension Zdelta}, we get that
		\begin{align*}
			\dim Y_{\delta,\mu} & \leqslant \eta_{d_2-\mu,\delta} + 
			\omega_{d_1-\delta} + [d_1r-k_1e+k_1(r-k_1)(1-g)] -\delta k_1 \\
			& \quad\quad + [(d_2-\mu)k_1-k_2(d_1-\delta) + k_2(k_1-k_2)(1-g)] +(r-k_2)\mu \\
			& = \ed(d_1,d_2) + \eta_{d_2-\mu,\delta} + \omega_{d_1-\delta} - (k_1-k_2)\delta - k_1\mu + (r-k_2)\mu \\
			& = \ed(d_1,d_2) + \eta_{d_2-\mu,\delta} + \omega_{d_1-\delta} -(k_1-k_2)\delta - (k_1+k_2-r)\mu
		\end{align*}
		Recall that $d_1 \geqslant \gamma(E,k_1,k_2)$ 
		and $d_2 \geqslant \beta''(E,k_1,k_2,d_1)$.
		Using \eqref{d1 satisfies} and \eqref{d2 satisfies} we have 
		$$ \dim Y_{\delta, \mu} < \ed(d_1,d_2).$$
		This proves the Lemma.
	\end{proof}

	\begin{remark}\label{remark codim 2}
		If $k_1+k_2>r$ and $k_1-k_2\geqslant 2$, then a similar argument 
		as above shows that $ \dim Y_{\delta, \mu} \leqslant  \ed(d_1,d_2)-2$.
		We only have to change the definition of $M$ in \eqref{def M 2} to 
		$$\max_{d_1' \in P_1} \left\{\frac{\omega_{d_1'}}{k_1-k_2} + d_1' +1\right\}\,.$$
	\end{remark}

	\begin{theorem}\label{irreducibility of nested Quot scheme}
		Assume $k_1+k_2 > r$.
		There exists a number $\gamma(E,k_1,k_2)$ such that the following happens. 
		For all $d_1\geqslant \gamma(E,k_1,k_2)$, there is a number $\beta(E,k_1,k_2,d_1)$, 
		such that if $d_2 \geqslant \beta(E,k_1,k_2,d_1)$, then 
		\begin{enumerate}
			\item The nested Quot scheme $\Q^{k_1,k_2}_{d_1,d_2}(E)$ is irreducible
			of dimension $\ed(d_1,d_2)$. 
			\item The map $\Q^{k_1,k_2}_{d_1,d_2}(E)\to \Q^{k_1}_{d_1}(E)$ is a local complete
			intersection morphism. In particular, it follows that $\Q^{k_1,k_2}_{d_1,d_2}(E)$ 
			is a local complete intersection. 
			\item The nested Quot scheme $\Q^{k_1,k_2}_{d_1,d_2}(E)$ is an integral scheme.
			It is normal if $k_1-k_2\geqslant 2$. 
		\end{enumerate}
		 
	\end{theorem}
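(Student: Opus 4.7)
The plan is to combine the previously established facts about the open $U$ and the locally closed strata $Y_{\delta,\mu}$ via the stratification (\ref{equation nested quot disjoint union}). I set $\beta(E,k_1,k_2,d_1) := \max\{\beta'(E,k_1,k_2,d_1),\beta''(E,k_1,k_2,d_1)\}$, possibly enlarged further to absorb an $\alpha'$-type constant needed in the normality step, and use $\gamma(E,k_1,k_2)$ as already defined in Lemma \ref{dimension Ydeltamu < expd}. Since $\gamma \geqslant d(E,k_1)$ by construction, both Lemma \ref{lemma irreducible open set U} and Remark \ref{recall d(E,k_1)} apply.

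For (1), by Lemma \ref{lemma irreducible open set U}, $U$ is a non-empty irreducible open subscheme of dimension $\ed(d_1,d_2)$, so $\overline{U}$ is an irreducible component of that dimension. Any hypothetical additional component would be the closure of some subset lying entirely in $\sqcup_{\delta \geqslant 1,\mu \geqslant 0} Y_{\delta,\mu}$, and would have dimension $\geqslant \ed(d_1,d_2)$ by Lemma \ref{lemma lower bound of dimension of component}. This contradicts Lemma \ref{dimension Ydeltamu < expd}, so $\Q^{k_1,k_2}_{d_1,d_2}(E) = \overline{U}$ is irreducible of the claimed dimension.

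For (2), fix a closed point $[q_1,q_2]$ and write $S_{12} = \ker(q_2)$. Viewing the nested Quot scheme as the relative Quot scheme (\ref{nested as relative-2}) over $\Q^{k_1}_{d_1}(E)$, the estimate (\ref{dimension bound nested}) from Lemma \ref{lemma dimension bound of relative quot} becomes
$$
\hom(S_{12},F_2) \geqslant \dim_{(q_1,q_2)}\Q^{k_1,k_2}_{d_1,d_2}(E) - [d_1r - k_1 e + k_1(r-k_1)(1-g)] \geqslant \hom(S_{12},F_2) - \ext^1(S_{12},F_2)\,.
$$
By part (1) together with Remark \ref{recall d(E,k_1)}, the middle quantity equals $d_2 k_1 - d_1 k_2 + k_2(k_1-k_2)(1-g)$, which is exactly the right-hand side by (\ref{equation hom-ext1 for any sheaf}). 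Hence both inequalities are equalities at every closed point, and \cite[Theorem 5.17, Chapter 1]{Kol96} yields that the structure map $\Q^{k_1,k_2}_{d_1,d_2}(E) \to \Q^{k_1}_{d_1}(E)$ is a local complete intersection morphism. Composing with the l.c.i.\ scheme $\Q^{k_1}_{d_1}(E)$ from Remark \ref{recall d(E,k_1)} gives that $\Q^{k_1,k_2}_{d_1,d_2}(E)$ is itself l.c.i.

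For (3), the l.c.i.\ property supplies Cohen-Macaulayness, hence Serre's $S_k$ for all $k$. Generic smoothness of $U$ from Lemma \ref{lemma irreducible open set U} gives $R_0$, and combined with irreducibility, Serre's criterion yields integrality. For normality under the extra hypothesis $k_1 - k_2 \geqslant 2$, I invoke Remark \ref{remark codim 2} to upgrade Lemma \ref{dimension Ydeltamu < expd} to $\dim Y_{\delta,\mu} \leqslant \ed(d_1,d_2) - 2$, so the complement of $U$ has codimension $\geqslant 2$. On $U$ itself, $\F_1$ restricts to a locally free sheaf over the normal base $\Q^{k_1}_{d_1}(E)^0$ (whose singular locus has codimension $\geqslant 2$ by Remark \ref{recall d(E,k_1)}), so Theorem \ref{theorem irreducibility of relative quot}(5) forces the singular locus of $U$ to have codimension $\geqslant 2$ in $U$. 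Combining the two bounds gives $R_1$ globally, and Serre's criterion yields normality. The main obstacle is this last normality step: one must assemble the upgraded strata dimension bound (Remark \ref{remark codim 2}) with Theorem \ref{theorem irreducibility of relative quot}(5) applied to the locally free restriction of $\F_1$ over $\Q^{k_1}_{d_1}(E)^0$, which is exactly where the hypothesis $k_1 - k_2 \geqslant 2$ enters and why $\beta$ must be enlarged to dominate the relevant $\alpha'$-constant.
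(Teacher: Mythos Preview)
Your proposal is correct and follows essentially the same approach as the paper's own proof: the same choice of $\gamma$ and $\beta$, the same stratification argument for (1), the same application of \cite[Theorem 5.17, Chapter 1]{Kol96} for (2), and the same combination of Cohen--Macaulayness with $R_0$/$R_1$ via Remark \ref{remark codim 2} and Theorem \ref{theorem irreducibility of relative quot}(5) for (3). Your observation that $\beta$ should be enlarged to dominate the $\alpha'$-constant from Theorem \ref{theorem irreducibility of relative quot}(5) is in fact slightly more careful than the paper, which leaves this implicit.
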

	\begin{proof}
		We take $\gamma(E,k_1,k_2)$ to be as defined in 
		Lemma \ref{dimension Ydeltamu < expd} and 
		assume $d_1 \geqslant \gamma(E,k_1,k_2)$.
		Recall the definitions of $\beta'$ from
		Lemma \ref{lemma irreducible open set U} and $\beta''$ from Lemma \ref{dimension Ydeltamu < expd}.
		Define
		$$ \beta(E,k_1,k_2,d_1) := 
		\max\{\beta'(E,k_1,k_2,d_1), \beta''(E,k_1,k_2,d_1)\}\,.$$
		Assume $d_2 \geqslant \beta(E,k_1,k_2,d_1)$.
		Recall the open subset $U$ defined before from 
		Lemma \ref{lemma irreducible open set U} and the 
		locally closed stratification in \eqref{equation nested quot disjoint union},
		$$ \Q^{k_1,k_2}_{d_1,d_2}(E) = 
		\left( \bigsqcup_{{\delta \geqslant 1, \mu \geqslant 0}} 
		Y_{\delta, \mu} \right) \bigsqcup U\, ,$$
		As $d_1 \geqslant d(E,k_1)$ and $d_2 \geqslant \beta'(E,k_1,k_2,d_1)$,
		Lemma \ref{lemma irreducible open set U} shows that $U$ is an 
		irreducible open subset of dimension $\ed(d_1,d_2)$.
		So $\overline U$ is an irreducible component of $\Q^{k_1,k_2}_{d_1,d_2}(E)$.
		
		Let $\mc W$ be an irreducible component of the 
		nested Quot scheme $\Q^{k_1,k_2}_{d_1,d_2}(E)$.
		By Lemma \ref{lemma lower bound of dimension of component}, we have 
		$\dim \mc W \geqslant \ed(d_1,d_2)$.
		Lemma \ref{dimension Ydeltamu < expd} implies that 
		points of $Y_{\delta,\mu}$ cannot be general in $\mc W$.
		Thus, it follows that $\mc W = \overline U$ is 
		the only component of $\Q^{k_1,k_2}_{d_1,d_2}(E)$.
		Hence, $\Q^{k_1,k_2}_{d_1,d_2}(E)$ is irreducible of dimension $\ed(d_1,d_2)$.
		This proves (1).

		As $d_1 \geqslant d(E,k_1)$, it follows that 
		$\Q^{k_1}_{d_1}(E)$ is irreducible, and so a local complete intersection by Remark \ref{recall d(E,k_1)}. 
        Recall from \eqref{nested as relative-2}
		that the nested Quot scheme $\Q^{k_1,k_2}_{d_1,d_2}(E)$ is the relative Quot scheme 
        $\Quot_{C \times \Q^{k_1}_{d_1}(E)/\Q^{k_1}_{d_1}(E)}(\F_1,k_2,d_2)$
        over $\Q^{k_1}_{d_1}(E)$
        and the universal quotient $\mc F_1$ is flat over $\Q^{k_1}_{d_1}(E)$.
        Next we will apply \cite[Chapter 1, Theorem 5.17]{Kol96} to this relative Quot scheme to prove (2).
        By part (1), this scheme is irreducible, the dimension is constant at any closed point and equals $\ed(d_1,d_2)$.
        Take a closed point corresponding to the pair of quotients 
		$[E\xrightarrow{q}F_1\xrightarrow{q_1}F_2]$. 
		Let $S_{12}$ denote the kernel of $q_1$.
        The following equality 
        $$\dim \Q^{k_1,k_2}_{d_1,d_2}(E)=\hom(S_{12},F_2)- \ext^1(S_{12},F_2)+\dim \Q^{k_1}_{d_1}(E)$$
        holds using \eqref{equation hom-ext1 for any sheaf}.
        Using \cite[Chapter 1, Theorem 5.17.2]{Kol96},
        we conclude that the map
        $\Q^{k_1,k_2}_{d_1,d_2}(E) \to \Q^{k_1}_{d_1}(E)$
        is a local complete intersection morphism. Since $\Q^{k_1}_{d_1}(E)$
        is also a local complete intersection, it follows that the nested Quot scheme
        is a local complete intersection and so also Cohen-Macaulay.
		This proves (2).
		
		Recall from Remark \ref{recall d(E,k_1)} that $\Q^{k_1}_{d_1}(E)$ is an integral 
		scheme which is normal. Since the nested Quot scheme is irreducible and Cohen Macaulay,
		to show it is integral, it suffices to check that Serre's condition $R_0$
		holds. 
        Lemma \ref{lemma irreducible open set U} says that the open subscheme $U$ is generically smooth.
        It follows that the nested Quot scheme
		satisfies Serre's condition $R_0$, and so is integral. 
		
		Assume $k_1-k_2\geqslant 2$. To show that the nested Quot scheme 
		is normal, it suffices to show that Serre's condition $R_1$ holds. 
		Thus, it suffices to show that the singular locus has codimension $\geqslant 2$. 
		As we remarked in the proof of Lemma \ref{dimension Ydeltamu < expd}, 
			once we fix $d_1$ and $d_2$, since both $F_1$ and $F_2$
			are quotients of $E$, there are only finitely many possible values for 
		$\delta$ and $\mu$. By Remark \ref{remark codim 2}, it follows that the codimension
		of each $Y_{\delta,\mu}$ is greater than or equal to 2.
		It follows from \eqref{equation nested quot disjoint union}, that it suffices to 
		show that the singular locus of $U$ has codimension $\geqslant 2$. 
		As we did 
		in the proof of Lemma \ref{lemma irreducible open set U}, write $U$
		as a relative Quot scheme over the subset $\Q^{k_1}_{d_1}(E)^0$.
		Taking $T=\Q^{k_1}_{d_1}(E)^0$ and applying 
		Theorem \ref{theorem irreducibility of relative quot}(5),
		combined with the fact that $T$ is normal (Remark \ref{recall d(E,k_1)}), it follows
		that the singular locus of $U$ has codimension $\geqslant 2$. It follows
		that the nested Quot scheme is normal.
	\end{proof}

\begin{theorem}\label{irreducibility of nested Quot scheme 2}
	There exists a number $\gamma(E,k_1,k_2)$ such that the following happens. 
	For all $d_1\geqslant \gamma(E,k_1,k_2)$, there is a number $\beta(E,k_1,k_2,d_1)$, 
	such that if $d_2 \geqslant \beta(E,k_1,k_2,d_1)$, then 
	\begin{enumerate}
		\item The nested Quot scheme $\Q^{k_1,k_2}_{d_1,d_2}(E)$ is irreducible
		of dimension $\ed(d_1,d_2)$. 
		\item The structure map $\Q^{k_1,k_2}_{d_1,d_2}(E)\to \Q^{k_1}_{d_1}(E)$ is a local complete
		intersection morphism. In particular, it follows that $\Q^{k_1,k_2}_{d_1,d_2}(E)$ 
		is a local complete intersection. 
		\item The nested Quot scheme $\Q^{k_1,k_2}_{d_1,d_2}(E)$ is an integral scheme.
	\end{enumerate}
\end{theorem}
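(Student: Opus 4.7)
The plan is to adapt the proof of Theorem \ref{irreducibility of nested Quot scheme} to the setting without the hypothesis $k_1+k_2>r$, by establishing a sharper upper bound on the dimensions of the strata $Y_{\delta,\mu}$. I continue with the presentation of $\Q^{k_1,k_2}_{d_1,d_2}(E)$ as the relative Quot scheme over $\Q^{k_1}_{d_1}(E)$ via \eqref{nested as relative-2} and with the locally closed stratification \eqref{equation nested quot disjoint union}.

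First, fix $d_1 \geqslant d(E,k_1)$ so that Remark \ref{recall d(E,k_1)} applies, and $d_2 \geqslant \beta'(E,k_1,k_2,d_1)$ so that Lemma \ref{lemma irreducible open set U} gives $U$ irreducible and generically smooth of dimension $\ed(d_1,d_2)$; thus $\overline{U}$ is an irreducible component. By Lemma \ref{lemma lower bound of dimension of component}, every component has dimension at least $\ed(d_1,d_2)$, so irreducibility follows once we show $\dim Y_{\delta,\mu} < \ed(d_1,d_2)$ for all $\delta \geqslant 1$ and $\mu \geqslant 0$.

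The critical step is to replace the crude bound $\dim \Quot(\ker q_2',0,\mu) \leqslant (r-k_2)\mu$ used in \eqref{equation dimension Ydeltamu} by a bound that exploits the actual rank $k_1-k_2$ of $\ker q_2'$. Since $F_2'$ is locally free, $q_2'$ factors through $F_1/\Tor(F_1)$, so $\ker q_2' \cong S \oplus \Tor(F_1)$ with $S$ locally free of rank $k_1-k_2$ and $\Tor(F_1)$ of length $\delta$. Stratifying $\Quot(S \oplus \Tor(F_1),0,\mu)$ by the length of the image of $\Tor(F_1)$ in the quotient, using that $\dim \Quot(S, 0, \mu') = (k_1-k_2)\mu'$ for the locally free part, and exploiting the boundedness of the family of $\ker q_2'$ as $(q_1,q_2')$ varies, one expects a uniform bound
\[
\dim \Quot(\ker q_2',0,\mu) \leqslant (k_1-k_2)\mu + C(E,k_1,k_2,d_1,\delta),
\]
with $C$ independent of $\mu$. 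Substituting into the chain of estimates in \eqref{equation dimension Ydeltamu} via Lemmas \ref{dimension Zdelta} and \ref{lemma dimension relative Quot over Zdelta} yields
\[
\dim Y_{\delta,\mu} \leqslant \ed(d_1,d_2) + \omega_{d_1-\delta} + \eta_{d_2-\mu,\delta} - (k_1-k_2)\delta - k_2\mu + C(E,k_1,k_2,d_1,\delta).
\]
Since $\delta \leqslant d_1 - m(E,k_1)$, only finitely many $\delta$ occur. Enlarging $\gamma(E,k_1,k_2)$ so that for $d_1 \geqslant \gamma$ we have $\omega_{d_1-\delta}=0$ and $(k_1-k_2)\delta > C(E,k_1,k_2,d_1,\delta)$ for every admissible $\delta$, and then choosing $\beta(E,k_1,k_2,d_1)$ large enough to force $\eta_{d_2-\mu,\delta}=0$, the right hand side is strictly less than $\ed(d_1,d_2)$ (using also $k_2\mu \geqslant 0$). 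This establishes (1).

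Parts (2) and (3) are then immediate from the arguments in the proof of Theorem \ref{irreducibility of nested Quot scheme}: for (2), \cite[Chapter 1, Theorem 5.17]{Kol96} applied to the structure morphism $\Q^{k_1,k_2}_{d_1,d_2}(E) \to \Q^{k_1}_{d_1}(E)$, using the identity of \eqref{equation hom-ext1 for any sheaf} together with the lci property of $\Q^{k_1}_{d_1}(E)$ from Remark \ref{recall d(E,k_1)}, gives that the structure morphism is an lci morphism and hence that the nested Quot scheme is an lci, in particular Cohen-Macaulay; for (3), $\Q^{k_1,k_2}_{d_1,d_2}(E)$ is irreducible and Cohen-Macaulay (by (1) and (2)), and Lemma \ref{lemma irreducible open set U} gives generic smoothness on $U$, so Serre's condition $R_0$ holds and integrality follows. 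The principal obstacle is establishing the sharper dimension bound on $\Quot(\ker q_2',0,\mu)$ uniformly over the family with $C$ depending only on $E, k_1, k_2, d_1, \delta$; the coarser $(r-k_2)\mu$ bound used in Lemma \ref{dimension Ydeltamu < expd} is precisely what forced the hypothesis $k_1+k_2>r$ there, and replacing it requires a careful analysis of torsion quotients of the direct sum $S \oplus \Tor(F_1)$ of a locally free and a torsion sheaf.
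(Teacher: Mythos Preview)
Your strategy is fundamentally different from the paper's, and it contains a genuine gap that you yourself identify at the end.

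The paper does \emph{not} attempt to sharpen the bound \eqref{equation dimension Ydeltamu}. Instead it uses a reduction trick: choose $l$ with $l+k_1+k_2>r$, set $E'=E\oplus\mc O_C^{\oplus l}$, $k_i'=k_i+l$, $r'=r+l$, and apply the already-proven Theorem \ref{irreducibility of nested Quot scheme} to $\Q^{k_1',k_2'}_{d_1,d_2}(E')$, which is now integral since $k_1'+k_2'>r'$. One then checks that the open locus $T\subset \Q^{k_1',k_2'}_{d_1,d_2}(E')$ where both universal kernels $\mc S_1'\subset\mc S_2'$ land inside $p_C^*E\subset p_C^*E'$ maps surjectively onto $\Q^{k_1,k_2}_{d_1,d_2}(E)$ (the point $[E\to F_1\to F_2]$ is hit by $[E'\to F_1\oplus\mc O_C^{\oplus l}\to F_2\oplus\mc O_C^{\oplus l}]$). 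Irreducibility of $\Q^{k_1,k_2}_{d_1,d_2}(E)$ follows immediately; parts (2) and (3) then proceed exactly as you describe. This sidesteps entirely the delicate analysis of $\Quot(S\oplus\Tor(F_1),0,\mu)$.

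As for your approach, the claimed bound $\dim\Quot(\ker q_2',0,\mu)\leqslant (k_1-k_2)\mu+C$ with $C$ independent of $\mu$ is the whole content, and you do not prove it. The stratification by the length of the image of $\Tor(F_1)$ gives a quotient of $S$ of length $\mu-\mu_1$ together with a quotient of $\Tor(F_1)$ of length $\mu_1\leqslant\delta$, but reconstructing the original quotient involves both an extension class in $\text{Ext}^1$ between two torsion sheaves and a choice of lift of the map from $S$; controlling these uniformly in $\mu$ is not automatic. Even granting such a $C$, there are further difficulties: you claim one can enlarge $\gamma(E,k_1,k_2)$ so that $\omega_{d_1-\delta}=0$ and $(k_1-k_2)\delta>C(E,k_1,k_2,d_1,\delta)$ for all admissible $\delta$, but $\delta$ ranges up to $d_1-m(E,k_1)$, so $d_1-\delta$ can be as small as $m(E,k_1)$ and $\omega_{d_1-\delta}$ need not vanish; moreover $C$ is allowed to depend on $d_1$, so the inequality $(k_1-k_2)\delta>C$ cannot be arranged by choosing $\gamma$ depending only on $E,k_1,k_2$ without further control on how $C$ grows. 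Similarly the claim $\eta_{d_2-\mu,\delta}=0$ fails for large $\mu$. These issues are repairable along the lines of the finite-set arguments in Lemma \ref{dimension Ydeltamu < expd}, but only \emph{after} the main bound is established, and that bound remains the obstacle.
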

\begin{proof}
	Let $l$ be such that $l+k_1+k_2>r$. Let $E':=E\oplus \mc O_C^{\oplus l}$. 
	Let $k_1':=k_1+l$, $k_2':=k_2+l$ and let $r':=r+l$. Consider the nested Quot scheme 
	$$\Q^{k'_1,k'_2}_{d_1,d_2}(E'):=\Quot_{C/\C}(E',k_1',k_2',d_1,d_2)\,.$$
	As $k_1'+k_2'>r'$, we may apply Theorem \ref{irreducibility of nested Quot scheme}.
	There exists a number $\gamma(E',k_1',k_2')$ such that the following happens. 
	For every $d_1\geqslant \gamma(E',k_1',k_2')$, there is a number $\beta(E',k_1',k_2',d_1)$, 
	such that if $d_2 \geqslant \beta(E',k_1',k_2',d_1)$, then 
	the nested Quot scheme $\Q^{k'_1,k'_2}_{d_1,d_2}(E')$ is integral. 
	We have the following two universal subsheaves on $C\times \Q^{k'_1,k'_2}_{d_1,d_2}(E')$:
	$$\mc S_1'\subset \mc S_2'\subset p_C^*E'\,.$$
	The locus of points $y\in \Q^{k'_1,k'_2}_{d_1,d_2}(E')$ such that the maps 
	$(\mc S_1')_y \to E$ and $(\mc S_2')_y \to E$ are inclusions is an open subset,
	see \cite[Lemma 6.12]{Ras24}. Let us denote this open set by $T$. The inclusions 
	$\mc S_1'\subset \mc S_2'\subset p_C^*E$ on $C\times T$ give a map $T\to \Q^{k_1,k_2}_{d_1,d_2}(E)$.
	Given a point $[E\xrightarrow{q_1}F_1\xrightarrow{q_2}F_2]\in \Q^{k_1,k_2}_{d_1,d_2}(E)$,
	it is clear that this point is the image of 
	$$[E\oplus \mc O_C^{\oplus l}\xrightarrow{q_1\oplus Id}F_1\oplus \mc O_C^{\oplus l}\xrightarrow{q_2\oplus Id}F_2\oplus\mc O_C^{\oplus l}]\in T\,.$$
	Thus, the map $T\to \Q^{k_1,k_2}_{d_1,d_2}(E)$ is surjective. It follows that $\Q^{k_1,k_2}_{d_1,d_2}(E)$
	is irreducible. By Lemma \ref{lemma irreducible open set U} it has dimension 
	$\ed(d_1,d_2)$. This proves (1). The proof of (2) is similar to that of Theorem \ref{irreducibility of nested Quot scheme}(2).
	The proof of (3) is similar to the proof of integrality in Theorem \ref{irreducibility of nested Quot scheme}(3).
\end{proof}

\newcommand{\etalchar}[1]{$^{#1}$}


\begin{thebibliography}{GLM{\etalchar{+}}23}
\expandafter\ifx\csname url\endcsname\relax
  \def\url#1{\texttt{#1}}\fi
\expandafter\ifx\csname doi\endcsname\relax
  \def\doi#1{\burlalt{doi:#1}{http://dx.doi.org/#1}}\fi
\expandafter\ifx\csname urlprefix\endcsname\relax\def\urlprefix{URL }\fi
\expandafter\ifx\csname href\endcsname\relax
  \def\href#1#2{#2}\fi
\expandafter\ifx\csname burlalt\endcsname\relax
  \def\burlalt#1#2{\href{#2}{#1}}\fi

\bibitem[BDW96]{BDW}
Aaron Bertram, Georgios Daskalopoulos, and Richard Wentworth.
\newblock Gromov invariants for holomorphic maps from {R}iemann surfaces to
  {G}rassmannians.
\newblock {\em J. Amer. Math. Soc.}, 9(2):529--571, 1996.
\newblock \doi{10.1090/S0894-0347-96-00190-7}.

\bibitem[BFP20]{BFP19}
Massimo Bagnarol, Barbara Fantechi, and Fabio Perroni.
\newblock On the motive of {Q}uot schemes of zero-dimensional quotients on a
  curve.
\newblock {\em New York J. Math.}, 26:138--148, 2020.

\bibitem[BGS24]{BGS22}
Indranil Biswas, Chandranandan Gangopadhyay, and Ronnie Sebastian.
\newblock Infinitesimal deformations of some quot schemes, 2024.
\newblock \doi{https://doi.org/10.1093/imrn/rnae033}.

\bibitem[Che94]{che}
Jan Cheah.
\newblock {\em The cohomology of smooth nested {H}ilbert schemes of points}.
\newblock ProQuest LLC, Ann Arbor, MI, 1994.
\newblock
  \urlprefix\url{http://gateway.proquest.com/openurl?url_ver=Z39.88-2004&rft_val_fmt=info:ofi/fmt:kev:mtx:dissertation&res_dat=xri:pqdiss&rft_dat=xri:pqdiss:9501483}.
\newblock Thesis (Ph.D.)--The University of Chicago.

\bibitem[dJ96]{dJ-alt}
A.~J. de~Jong.
\newblock Smoothness, semi-stability and alterations.
\newblock {\em Inst. Hautes \'{E}tudes Sci. Publ. Math.}, (83):51--93, 1996.
\newblock \urlprefix\url{http://www.numdam.org/item?id=PMIHES_1996__83__51_0}.

\bibitem[FGI{\etalchar{+}}05]{FGA}
Barbara Fantechi, Lothar G\"{o}ttsche, Luc Illusie, Steven~L. Kleiman, Nitin
  Nitsure, and Angelo Vistoli.
\newblock {\em Fundamental algebraic geometry}, volume 123 of {\em Mathematical
  Surveys and Monographs}.
\newblock American Mathematical Society, Providence, RI, 2005.
\newblock \doi{10.1090/surv/123}.
\newblock Grothendieck's FGA explained.

\bibitem[GLM{\etalchar{+}}23]{GLMRS}
Michele Graffeo, Paolo Lella, Sergej Monavari, Andrea~T. Ricolfi, and Alessio
  Sammartano.
\newblock The geometry of double nested hilbert schemes of points on curves,
  2023, \burlalt{2310.09230}{http://arxiv.org/abs/2310.09230}.

\bibitem[GRS24]{GRS23}
Chandranandan Gangopadhyay, Parvez Rasul, and Ronnie Sebastian.
\newblock Irreducibility of some nested hilbert schemes.
\newblock {\em Proc. Amer. Math. Soc.}, 2024.
\newblock \doi{https://doi.org/10.1090/proc/16698}.

\bibitem[GS24]{gs22}
Chandranandan Gangopadhyay and Ronnie Sebastian.
\newblock Picard groups of some quot schemes.
\newblock 2024.
\newblock \doi{https://doi.org/10.1093/imrn/rnae028}.

\bibitem[GSY20]{GSY20}
Amin Gholampour, Artan Sheshmani, and Shing-Tung Yau.
\newblock Nested {H}ilbert schemes on surfaces: virtual fundamental class.
\newblock {\em Adv. Math.}, 365:107046, 50, 2020.
\newblock \doi{10.1016/j.aim.2020.107046}.

\bibitem[GT20]{GT20}
Amin Gholampour and Richard~P. Thomas.
\newblock Degeneracy loci, virtual cycles and nested {H}ilbert schemes, {I}.
\newblock {\em Tunis. J. Math.}, 2(3):633--665, 2020.
\newblock \doi{10.2140/tunis.2020.2.633}.

\bibitem[Har77]{Ha}
Robin Hartshorne.
\newblock {\em Algebraic geometry}.
\newblock Springer-Verlag, New York-Heidelberg, 1977.
\newblock Graduate Texts in Mathematics, No. 52.

\bibitem[HL10]{HL}
Daniel Huybrechts and Manfred Lehn.
\newblock {\em The geometry of moduli spaces of sheaves}.
\newblock Cambridge Mathematical Library. Cambridge University Press,
  Cambridge, second edition, 2010.
\newblock \doi{10.1017/CBO9780511711985}.

\bibitem[Kol96]{Kol96}
J\'{a}nos Koll\'{a}r.
\newblock {\em Rational curves on algebraic varieties}, volume~32 of {\em
  Ergebnisse der Mathematik und ihrer Grenzgebiete. 3. Folge. A Series of
  Modern Surveys in Mathematics [Results in Mathematics and Related Areas. 3rd
  Series. A Series of Modern Surveys in Mathematics]}.
\newblock Springer-Verlag, Berlin, 1996.
\newblock \doi{10.1007/978-3-662-03276-3}.

\bibitem[Mon22]{Mon22}
Sergej Monavari.
\newblock Double nested {H}ilbert schemes and the local stable pairs theory of
  curves.
\newblock {\em Compos. Math.}, 158(9):1799--1849, 2022.
\newblock \doi{10.1112/s0010437x22007606}.

\bibitem[MR22]{MR22}
Sergej Monavari and Andrea~T. Ricolfi.
\newblock On the motive of the nested {Q}uot scheme of points on a curve.
\newblock {\em J. Algebra}, 610:99--118, 2022.
\newblock \doi{10.1016/j.jalgebra.2022.07.011}.

\bibitem[MR23]{MR23}
Sergej Monavari and Andrea~T. Ricolfi.
\newblock Sur la lissit\'{e} du sch\'{e}ma {Q}uot ponctuel embo\^{i}t\'{e}.
\newblock {\em Canad. Math. Bull.}, 66(1):178--184, 2023.
\newblock \doi{10.4153/S0008439522000224}.

\bibitem[MR25]{MR25}
Sergej Monavari and Andrea~T. Ricolfi.
\newblock Hyperquot schemes on curves: virtual class and motivic invariants.
\newblock {\em Math. Ann.}, 392(2):1665--1709, 2025.
\newblock \doi{10.1007/s00208-025-03129-2}.

\bibitem[OP21]{OP21}
Dragos Oprea and Rahul Pandharipande.
\newblock Quot schemes of curves and surfaces: virtual classes, integrals,
  {E}uler characteristics.
\newblock {\em Geom. Topol.}, 25(7):3425--3505, 2021.
\newblock \doi{10.2140/gt.2021.25.3425}.

\bibitem[OS23]{OS23}
Dragos Oprea and Shubham Sinha.
\newblock Euler characteristics of tautological bundles over {Q}uot schemes of
  curves.
\newblock {\em Adv. Math.}, 418:Paper No. 108943, 45, 2023.
\newblock \doi{10.1016/j.aim.2023.108943}.

\bibitem[PR03]{PR03}
Mihnea Popa and Mike Roth.
\newblock Stable maps and {Q}uot schemes.
\newblock {\em Invent. Math.}, 152(3):625--663, 2003.
\newblock \doi{10.1007/s00222-002-0279-y}.

\bibitem[Ras24]{Ras24}
Parvez Rasul.
\newblock Irreducibility of some quot schemes on nodal curves, 2024,
  \burlalt{2401.10528}{http://arxiv.org/abs/2401.10528}.

\bibitem[RS23]{RS23}
Ritvik Ramkumar and Alessio Sammartano.
\newblock Rational singularities of nested hilbert schemes.
\newblock {\em International Mathematics Research Notices},
  2024(2):1061–1122, February 2023.
\newblock \doi{10.1093/imrn/rnac365}.

\bibitem[RT22]{RT22}
Tim Ryan and Gregory Taylor.
\newblock Irreducibility and singularities of some nested {H}ilbert schemes.
\newblock {\em J. Algebra}, 609:380--406, 2022.
\newblock \doi{10.1016/j.jalgebra.2022.05.037}.

\bibitem[Stk]{Stk}
{The Stacks Project}.
\newblock \url{https://stacks.math.columbia.edu}.

\bibitem[Str87]{Str}
Stein~Arild Stromme.
\newblock On parametrized rational curves in {G}rassmann varieties.
\newblock In {\em Space curves ({R}occa di {P}apa, 1985)}, volume 1266 of {\em
  Lecture Notes in Math.}, pages 251--272. Springer, Berlin, 1987.
\newblock \doi{10.1007/BFb0078187}.

\end{thebibliography}
\end{document}